\begin{document}

\newtheorem{thm}{Theorem} 
\newtheorem*{thm*}{Theorem}
\newtheorem*{wish}{Wish}
\newtheorem{prop}{Proposition}[section] 
\newtheorem{llama}[prop]{Lemma} 
\newtheorem{sheep}[prop]{Corollary} 
\newtheorem*{sheep*}{Corollary} 
\newtheorem{deff}{Definition}
\newtheorem{lett}{Step}[section] 
\newtheorem{fact}[prop]{Fact}
\newtheorem*{fact*}{Fact}
\newtheorem{assume}{Assumption}[section]
\newtheorem{example}{Examples}[section]

\newcommand{\sthat}{\hspace{.1cm}| \hspace{.1cm}}
\newcommand{\p}{\mathbb{P}^1}
\newcommand{\ga}{\mathbb{G}_a}
\newcommand{\gm}{\mathbb{G}_m}
\newcommand{\spec}{ \mathit{Spec} }
\newcommand{\id}{\operatorname{id} }
\newcommand{\acl}{\operatorname{acl}}
\newcommand{\aclsig}{\operatorname{acl_\sigma}}

\newcommand{\tsig}{T_\sigma}
\newcommand{\lsig}{L_\sigma}
\newcommand{\sigdeg}{\deg_\sigma}
\newcommand{\dM}{\operatorname{dM}}
\newcommand{\RM}{\operatorname{RM}}
\newcommand{\zdense}{Zariski-dense }

\newcommand{\AAA}{\mathcal{A}}
\newcommand{\BB}{\mathcal{B}}
\newcommand{\CC}{\mathcal{C}}
\newcommand{\DD}{\mathcal{D}}
\newcommand{\EE}{\mathcal{E}}
\newcommand{\FF}{\mathcal{F}}
\newcommand{\MM}{\mathcal{M}}

\newcommand{\anandzoe}{Chatzidakis-Pillay }

\title{Grouplike minimal sets in ACFA and in $T_A$.}
\author{Alice Medvedev}

\address{Department of Mathematics, Statistics, and Computer Science \\
University of Illinois at Chicago \\
322 Science and Engineering Offices (M/C 249) \\
851 S. Morgan Street \\
Chicago, IL 60607-7045}
\email{alice@math.uic.edu}

\begin{abstract}
 This paper began as a generalization of a part of the author's PhD thesis about ACFA and ended up with a characterization of groups definable in $T_A$.
 The thesis concerns minimal formulae of the form $x \in A \wedge \sigma(x) = f(x)$ for an algebraic curve $A$ and a dominant rational function $f : A \rightarrow \sigma(A)$. These are shown to be uniform in the Zilber trichotomy, and the pairs $(A,f)$ that fall into each of the three cases are characterized. These characterizations are definable in families.
 This paper covers approximately half of the thesis, namely those parts of it which can be made purely model-theoretic by moving from ACFA, the model companion of the class of algebraically closed fields with an endomorphism, to $T_A$, the model companion of the class of models of an arbitrary totally-transcendental theory $T$ with an injective endomorphism, if this model-companion exists.
 A $T_A$ analog of the characterization of groups definable in ACFA is obtained in the process.
 The full characterization of the cases of the Zilber trichotomy in the thesis is obtained from these intermediate results with heavy use of algebraic geometry.
\end{abstract}

\maketitle

\section{Introduction}
\label{intro}

\subsection{Background and History and Motivation.}

  This paper is a generalization of a part of the author's PhD thesis \cite{mythesis}. A non-logician might find the exposition in the thesis more transparent, while the results in this paper are more general.
  In the thesis, we sought difference field analogs of the results that Hrushovski and Itai prove for differential fields in \cite{hruit}. They show that for some classes $C$ of strongly minimal sets definable in a differentially closed field, non-orthogonality to some type in $C$ is a definable property on families of definable sets. This allows them to produce a new model complete theory of differential fields (one for each such class $C$) realizing all types in differentially closed fields except for those non-orthogonal to something in $C$. The situation in difference fields is much more complicated, and we only make a first step toward this goal. However, our explicit description of the structure of certain definable sets in ACFA has proved tremendously useful for algebraic dynamics \cite{polydyn}.
  With those later results \cite{polydyn} we show that non-orthogonality between two minimal sets of the form $\sigma(x) = f(x)$ for a \emph{polynomial} $f$ in characteristic zero is often definable. We should say a little about difference fields before we give more details.

  A \emph{difference field} is a field $K$ with a distinguished endomorphism $\sigma$; it is naturally a structure for the language of rings augmented by a unary function symbol $\sigma$ denoting the endomorphism. This is a natural setting for studying functional equations, and it also turns out to be a useful formalism for studying algebraic dynamical systems, and for certain questions in arithmetic geometry.

 Functional equations like $f(x+1) = x f(x)$ that have been studied by analysts for several centuries fit into this formalism by taking $K$ to be, for example, the field of meromorphic functions on $\mathbb{C}$, and $\sigma(f(x)) = f(x+1)$. The name \emph{difference field} comes from considering an automorphism $\sigma(f(x)) = f(x+ \delta)$ for a fixed $\delta$ on some field of functions, and working with equations in finite difference quotients $(\frac{f(x+ \delta) - f(x)}{\delta})$ as an approximation to differential equations.
 Difference algebra, the study of difference polynomial rings and their ideals, was first developed by Ritt and Cohn and can be found in Cohn's book \cite{cohn}.
  There are several obstructions to developing difference-algebraic geometry as an analog to algebraic geometry. Cohn's book defines the correct analog of radical ideals, and proves that they satisfy the ascending chain condition. However, in contrast with plain algebraic geometry, it may be impossible to amalgamate two difference field extensions, which seems to preclude Weil-style universal domains. It is also unclear, from the algebraic point of view, how to define the dimension of a difference-closed set: for example, the natural Krull dimension fails to satisfy the fiber dimension theorem. Model theory offers solutions for both of these.

  Several people, including Macintyre and van den Dries, noticed in mid-90s that the class of difference fields has a model companion, called ACFA. Its models serve as universal domains for difference algebra; for that reason, they are sometimes called \emph{difference-closed fields}. ACFA has been studied extensively by Chatzidakis, Hrushovski, and others, especially in \cite{ChaHru1} and \cite{ChaHruPet}. In particular, Chatzidakis and Hrushovski show in \cite{ChaHru1} that ACFA is a supersimple theory, so the Lascar rank provides a good notion of dimension. In \cite{ChaHruPet}, it is shown that the minimal (Lascar rank 1) types in ACFA satisfy a version of the Zilber trichotomy: each is exactly one of fieldlike (nonorthogonal to a fixed field of a definable field automorphism), grouplike (nonorthogonal to a generic type of a minimal modular definable group), or trivial. They also show that the only definable field automorphisms are powers of $\sigma$, powers of the Frobenius automorphism, and compositions of these.

 Here is how model theory of difference fields, and specifically ACFA, is relevant to arithmetic.
 While each Frobenius endomorphism $\Phi^n (x) = x^{(p^n)}$ on a field of positive characteristic $p$ is already defined in the language of rings, one needs the language of difference fields (and the formalism of first-order logic) to speak of the ``limit theory'' of these structures, that is of the theory of a nonprincipal ultraproduct of them. Hrushovski shows in \cite{udifrob} that this limit theory is precisely ACFA.
  Hrushovski uses ACFA to give a new proof of the Manin-Mumford conjecture in \cite{udimm}, giving more explicit bounds for the number of torsion points of the Jacobian of an algebraic curve which lie on the curve. Indeed, the difference equations of the form $\sigma(x) = f(x)$ for a rational function $f$, the focus of the author's thesis \cite{mythesis}, figure prominently in his work.

  The author's thesis \cite{mythesis} concerns minimal formulae in models of ACFA of the form $x \in A \wedge \sigma(x) = f(x)$ for an algebraic curve $A$ and a dominant rational function $f : A \rightarrow \sigma(A)$. We prove that these formulae are uniform in the Zilber trichotomy, that is, that all non-algebraic types containing a given formula fall into the same case of the trichotomy; and we characterize the pairs $(A,f)$ that fall into each of the three cases. The fieldlike case is already characterized in \cite{ChaHru1} as purely inseparable (including linear) $f$; we show that $(A,f)$ gives a grouplike formula if and only if $f$ is not purely inseparable and either (1) $A$ is (birationally isomorphic to) an algebraic group curve (additive, multiplicative, or elliptic) and $f$ is (skew-conjugate to) a group homomorphism; or (2) $A$ is (birationally isomorphic to) $\mathbb{P}^1$ and $f$ is (skew-conjugate to) a generalized Latt\`{e}s function. Here, $f$ is skew-conjugate to $g$ if there is a birational isomorphism $L$ such that $g = L^\sigma \circ f \circ L^{-1}$. A generalized Latt\`{e}s function is a quotient of an isogeny of algebraic groups by a finite group of automorphisms of the algebraic group (see \cite{aboutlattes} for a beautiful exposition on Latt\`{e}s functions in characteristic $0$ and \cite{silverdyn} for more arithmetic dynamics). Precise definitions and details are given in the last section of this paper.

  This paper covers approximately half of the thesis, namely those parts of it which can be made purely model-theoretic by moving from ACFA, the model companion of the class of algebraically closed fields with an endomorphism, to $T_A$, the model companion of the class of models of an arbitrary totally-transcendental theory $T$ with an injective endomorphism, if this model-companion exists. See section\ref{meetta} below for details.
  With very recent work \cite{amador} by Blossier, Martin-Pizarro, and Wagner, we also obtain a characterization of definable groups in this generality. Although there is no hope for the Zilber Trichotomy in this generality, it is useful to generalize the results from ACFA to $T_A$ for three reasons. We are pleased to give a more model-theoretic account. We hope that this could be useful in other theories, most notably DCFA. Back in ACFA, this exposition clarifies which parts of the proof rely on $A$ being a curve (rather than a higher-dimensional variety), and which rely on $f$ being a single-valued function rather than a finite-valued correspondence; we hope this will help us eliminate these hypotheses.

 The author is most grateful to her thesis advisor, Thomas Scanlon; to Bjorn Poonen, who read the thesis and suggested several corrections and simplifications; and to Moshe Kamensky, whose interest in this generalization to $T_A$ inspired this paper, several drafts of which he read carefully. We thank the referee for the close reading and the detailed comments.

\subsection{Please meet $T_A$.}
\label{meetta}

 We take a stable theory $T$ which eliminates quantifiers and imaginaries in a language $L$. We denote $L$-definable sets by $\AAA$, $\BB$, etc. Sets denoted by non-italics might not be definable at all; occasionally they are definable in an expanded language $\lsig$, or type-definable in one of the languages.

\begin{deff}
 Let $\lsig := L \cup \{ \sigma\}$, where $\sigma$ is a new unary function symbol.\\
 Let  $T_\sigma := T \cup
  \{ \forall x\, \phi(x) \leftrightarrow \phi(\sigma(x)) \sthat \phi \in L \} \subset \lsig$.\\
  If $T_\sigma$ admits a model-companion, we call the model-companion $T_A$.\\
   In an $\lsig$-structure, we write $\aclsig$ for the usual model-theoretic algebraic closure, and $\acl$ for the algebraic closure in the reduct to $L$.
\end{deff}

 $T_\sigma$ asserts that $\sigma$ is an injective $L$-endomorphism. This theory is called $T_{Aut}$ in \cite{bash}, where Baldwin and Shelah give a necessary and sufficient condition for the existence of $T_A$, a strengthening of ``$T$ does not have the finite cover property'' that they call `` $T$ does not admit obstructions''. Chatzidakis and Pillay first examined the theory $T_\sigma$ in \cite{chapi} and gave sufficient conditions (3.11.2, p. 85) for the existence of its model-companion $T_A$ when $T$ is a theory of finite Morley rank. They prove the following:

\begin{fact} (\cite{chapi})
\label{basicfacts}
 If $T$ is stable and has quantifier elimination, and $T_A$ exists, then
\begin{enumerate}
\item $T_A$ is simple, and supersimple if $T$ is superstable. (Corollary 3.8, p. 84)
\item \label{alclosure} In a model of $T_A$,
   $\aclsig(A) = \acl( \cup_{i \in \mathbb{Z}} \sigma^i(A) )$. (Lemma 3.6, p. 82)
 \item $T_A$ eliminates quantifiers down to formulas of the form
   $\exists z\, \theta(x, z)$ where \begin{itemize}
    \item $z$ is a single variable,
    \item $\theta$ is quantifier-free, and
    \item $\tsig \vdash \forall x \exists_{<N} z\, \theta(x,z)$ for some $N \in \mathbb{N}$
   \end{itemize}
  (follows from Proposition 3.5(2) by the usual methods.)
 \item \label{indepinTA} Forking independence in $T_A$ is given by the following: $A$ and $B$ are independent over $E$ if and only if $\aclsig(EA)$ is independent \emph{in the sense of $T$} from  $\aclsig(EB)$ over $\aclsig(E)$. (Terminology and Theorem 3.7) \\
\end{enumerate} \end{fact}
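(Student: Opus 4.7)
The plan is to obtain (3) first, since the normal-form quantifier elimination is the technical heart and the other statements follow from it. Since $T_A$ is the model companion of $\tsig$, every formula is equivalent to an existential $\lsig$-formula $\exists \bar y\, \phi(\bar x, \bar y)$ with $\phi$ quantifier-free. Introducing a fresh variable $y_{i,j}$ for each iterate $\sigma^i(y_j)$ occurring in $\phi$ together with the defining equations $\sigma(y_{i,j}) = y_{i+1,j}$, I can rewrite the formula so that $\phi$ is quantifier-free in $L$. The hypothesis that $T_A$ exists --- the Baldwin--Shelah ``no obstructions'' strengthening of not having the finite cover property --- then lets me bound the number of witnesses $\bar y$ uniformly, so that after pushing existentials through disjunctions I am left with a single existentially bound variable $z$ satisfying $\exists_{<N} z\, \theta(x,z)$.

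For (2), first observe that in a model of $T_A$ the endomorphism $\sigma$ is in fact an $L$-automorphism: existential closure forces $\sigma(x) = b$ to have a solution for every $b$, since the structure embeds into some model of $\tsig$ in which such a solution exists. Consequently the $L$-algebraic closure of any $\sigma$-invariant set is itself $\sigma$-invariant, and hence $\lsig$-algebraically closed, giving the inclusion $\acl(\cup_i \sigma^i(A)) \supseteq \aclsig(A)$. The reverse inclusion falls out of (3): an $\lsig$-algebraic formula over $A$ unfolds, via the normal form, into a quantifier-free $L$-formula over $\cup_i \sigma^i(A)$ with only finitely many solutions.

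For (4), and then (1), I take the relation in (4) as the definition of a candidate independence notion and verify the Kim--Pillay axioms characterizing forking in a simple theory: invariance, symmetry, transitivity, finite character, local character, and extension. Invariance through transitivity are inherited from the corresponding properties of forking in $T$ (which is stable, hence simple) applied to the $L$-reducts of the $\aclsig$-closures; local character similarly reduces to local character in $T$, once one notes that $\aclsig(EA)$ depends only on countably many parameters when $T$ is superstable. The delicate axiom is extension: given a $T_A$-type $p$ over $A$ and a set $B \supseteq A$, one must extend $p$ to a type over $B$ whose induced $L$-type on $\aclsig$-closures is a non-forking extension in $T$. This is where (3) is used, because one realizes the extension by building a coherent sequence of tuples satisfying quantifier-free $\lsig$-conditions compatible with a chosen $T$-non-forking extension of the corresponding $L$-type. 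Once the axioms are verified, simplicity follows automatically, and supersimplicity in the superstable case follows because the local character bound in $T_A$ reduces to the corresponding bound in $T$.

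The main obstacle is (3); the bookkeeping required to reduce an arbitrary $\lsig$-existential formula to the specific normal form, and in particular the step where the ``no obstructions'' hypothesis yields the uniform bound $N$, is where the actual work lies. Everything else is a reduction, via this normal form, to known properties of the ambient stable theory $T$.
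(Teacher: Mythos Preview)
This statement is a \emph{Fact}, cited from Chatzidakis--Pillay \cite{chapi} with explicit page and result references; the present paper gives no proof of it at all. So there is nothing in the paper to compare your proposal against.

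That said, your sketch is broadly in line with how Chatzidakis and Pillay actually argue, with two caveats. First, in \cite{chapi} the order is reversed: the description of $\aclsig$ (item (2) here, their Lemma~3.6) and the type-determination statement (their Proposition~3.5) come first and are used to obtain the near-quantifier-elimination (item (3)); you derive (2) from (3), which is workable but not how the source does it. Second, your attribution of the uniform bound $N$ in (3) to the Baldwin--Shelah ``no obstructions'' condition is misplaced. The bound arises not from a finiteness property of definable families but from the fact that, once types over $\aclsig$-closed sets are determined by quantifier-free data (Proposition~3.5(2) in \cite{chapi}), any $\lsig$-formula is equivalent to one asserting the existence of an element algebraic (in the $L$-sense) over the $\sigma$-orbit of the parameters; the bound $N$ is simply the degree of that algebraicity. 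The existence of $T_A$ enters only to guarantee model-completeness and the amalgamation underlying Proposition~3.5, not to supply $N$ directly. Your treatment of (4) and (1) via the Kim--Pillay axioms is exactly the approach taken in \cite{chapi}.
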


It follows that forking in $T_A$ is always witnessed by quantifier-free formulae. The theory of algebraically closed fields satisfies the sufficient conditions for the existence of $T_A$ given in \cite{chapi} (3.11.2, p. 85), so the above fact applies to it. The model-companion ACFA of ACF$_\sigma$ is examined in great detail in \cite{ChaHru1, ChaHruPet}.

 It also follows from the characterization of $\aclsig$ and the characterization of independence that in the terminology of \cite{amador} ``$T_A$ is one-based over T with respect to $\aclsig$'', so their Theorem 2.15 applies, pinning down $\lsig$-definable groups in terms of $L$-definable ones.
 Although both theories are assumed stable in the statement of the theorem, the authors state on p.3
 that it is sufficient for the larger theory to be simple and the smaller to be stable, as is the case in this paper.

 \begin{fact}( Theorem 2.15 in \cite{amador})
\label{amadorfact}
In a model of $T_A$, for any $\lsig$-definable group $H$ there are a finite-index subgroup $H'$ of $H$, an $L$-definable group $A$, and an $\lsig$-definable group homomorphism $\phi: H' \rightarrow A$ with finite kernel.
\end{fact}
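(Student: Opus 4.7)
My plan is to reduce the statement to Hrushovski's group configuration theorem inside the stable theory $T$, using Fact~\ref{basicfacts} to translate between $T_A$ and $T$. This is essentially the content of the relative one-basedness of $T_A$ over $T$ with respect to $\aclsig$ mentioned just above the statement.

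Fix a sufficiently saturated model of $T_A$ and a small base $E = \aclsig(E)$ over which $H$ is defined, and pick a generic $h \in H$ over $E$. By Fact~\ref{basicfacts}(\ref{alclosure}), $\aclsig(Eh) = \acl(E \cup \{\sigma^i(h) : i \in \mathbb{Z}\})$; since $H$ is defined by a formula mentioning only finitely many $\sigma^i$, the generic $L$-transcendence degree of $\{\sigma^i(h)\}$ over $E$ stabilizes, so there is a finite tuple $\bar h = (\sigma^{-n}h, \ldots, \sigma^n h)$ that is $L$-interalgebraic with $h$ over $E$. Now take two independent generics $h_1, h_2 \in H$ over $E$ and put $h_3 = h_1 h_2$. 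By Fact~\ref{basicfacts}(\ref{indepinTA}) the sets $\aclsig(E h_i)$ are pairwise $L$-independent over $E$, and the group law on $H$ makes each pair of the $\aclsig(E h_i)$ $L$-interalgebraic with the third. Passing to the finite tuples $\bar h_i$, this is precisely a generic group configuration in the stable theory $T$, which (by Hrushovski's theorem, applied in a theory with QE and EI) produces an $L$-definable connected group $A$ together with finite-to-finite $L$-correspondences $\bar h_i \leftrightarrow a_i \in A$ satisfying $a_1 a_2 = a_3$ generically.

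Finally, I would convert this generic correspondence into an honest $\lsig$-definable group homomorphism. Pass to the finite-index subgroup $H' \leq H$ on which the map $h \mapsto a$ is single-valued (possibly after replacing $A$ by an isogenous quotient to absorb the finite multivaluedness of the correspondence), and extend $\phi: H' \to A$ to all of $H'$ using the group law applied to generic products. The kernel consists of generic $h \in H'$ whose $L$-locus lies over $1_A$; such $h$ are $L$-algebraic over $E$, hence $\aclsig$-algebraic over $E$ by Fact~\ref{basicfacts}(\ref{alclosure}), hence they form a finite set. The main obstacle is precisely this last step --- the classical isogeny cleanup that turns a generic finite-to-finite correspondence into a single-valued definable homomorphism with finite kernel --- and relative one-basedness of $T_A$ over $T$ is exactly what keeps this cleanup finite rather than producing an infinite kernel.
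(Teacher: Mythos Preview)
The paper does not prove this statement at all: it is quoted as Theorem~2.15 of \cite{amador} and used as a black box. So there is no ``paper's own proof'' to compare against, and your proposal is really a sketch of how the cited result might be obtained.

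Your overall strategy---pass from the $\lsig$-group law on generics $h_1,h_2,h_3=h_1h_2$ to a group configuration in the stable reduct $T$ and invoke Hrushovski's theorem there---is indeed the idea underlying \cite{amador}, and the paper itself observes just before the Fact that the relevant hypothesis (``$T_A$ is one-based over $T$ with respect to $\aclsig$'') holds. So the shape of the argument is right.

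There is, however, a genuine gap. Your sentence ``since $H$ is defined by a formula mentioning only finitely many $\sigma^i$, the generic $L$-transcendence degree of $\{\sigma^i(h)\}$ over $E$ stabilizes, so there is a finite tuple $\bar h$ that is $L$-interalgebraic with $h$'' is not justified and is in general false. That $H$ is cut out by finitely many $\sigma$'s says nothing about whether $\sigma^{n+1}(h)\in\acl(E,\sigma^{-n}h,\ldots,\sigma^n h)$ for some $n$; this is exactly the condition that $\sigdeg(h/E)$ is \emph{defined}, and the paper is explicit (see the discussion after Lemma~\ref{sigdegprolong} and after Proposition~\ref{groupkey}) that this need not hold in $T_A$ for general $T$. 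The argument in \cite{amador} does not reduce to finite tuples; it works with the full $\aclsig$-closures (equivalently, with canonical bases in $T$), and the group configuration theorem in a stable theory is applied to these possibly infinite imaginaries. If you replace your finite $\bar h_i$ by $\aclsig(Eh_i)$ throughout and run the configuration/group-chunk argument at that level, the sketch becomes viable; as written, the finite-tuple step fails.
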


A better understanding of the proof of this fact may allow us to remove "totally transcendental" from the following assumption, but at present we do not see how to do that.

 \textbf{ We assume throughout this paper that $T$ is a totally transcendental 
theory with quantifier elimination and elimination of imaginaries, and that $T_A$ exists.}

\begin{deff}
 $\RM$ and $\dM$ are the Morley rank and degree of \emph{$L$-definable sets in the sense of $L$}. $U$ is the Lascar rank of $\lsig$-definable or $\lsig$-type-definable sets.
\end{deff}

 While the Lascar rank is normally defined for complete types, it is common to abuse notation and write $U(\pi)$ to mean $\sup \{ U(p) \sthat \pi \subset p \in S(A) \}$ when $\pi$ is a partial type over $A$.

\subsection{Outline of this paper}

 Sections \ref{moshethmsection}, \ref{groupsection}, and \ref{chase} are the heart of the paper. Here, unlike in the author's thesis, there is no algebraic geometry, and no appeal to any special properties of ACFA as opposed to $T_A$.

 The language used in this paper is the language of algebraic geometry tweaked and twisted to work in an arbitrary theory, for definable sets that have a Morley rank and functions between them. Thus \emph{irreducible} will mean Morley degree $1$, a notion that is insensitive to subsets of lower Morley rank. As there is no way to tell one definable bijection (e.g. linear) from another (e.g. Frobenius), we cannot avoid $p$th roots, and our \emph{degree} of a function is merely the number of points in a generic fiber, that is the separable degree. The beginning of section \ref{moshethmsection} is devoted to developing this language. Most of section \ref{forwref} is a translation from that language back to the usual algebraic geometry.

 Sections \ref{moshethmsection} and \ref{groupsection} are of general interest in that they describe the structure of models of $T_A$. Section \ref{moshethmsection} develops the basic tools of ACFA for $T_A$ and culminates with Theorem \ref{moshethm} characterizing $\lsig$-interalgebraicity in terms of $L$-definable sets and functions. Section \ref{groupsection} is a sequence of exercises is $\omega$-stable groups and simple groups that yields Theorem \ref{classifygp} that characterizes groups definable in models of $T_A$.

 Here is an outline of how we obtain our characterization of grouplike minimal types.
 Section \ref{moshethmsection} turns non-orthogonality between $L_\sigma$-types into a commutative diagram of $L$-definable sets and functions. Section \ref{groupsection} uses the main result of section \ref{moshethmsection} to obtain a commutative diagram with a group correspondence in it. Section \ref{chase} improves the diagram obtained in section \ref{groupsection} to the point where, in the special case of ACFA, it can be attacked with algebraic geometry. Then section \ref{forwref} describes this algebra-geometric attack, postponing the proofs of the crucial algebra-geometric results to a later paper \cite{secondpaper}.

  We use fairly naive and certainly well-known techniques, or close variants thereof: many, if not most, of our lemmas are there only to introduce the notation and tell the story. Nevertheless, some of the results (Theorem \ref{moshethm}, Lemma \ref{groupwrap}, Theorem \ref{classifygp}, and the final Theorem \ref{endthm}) are not entirely obvious and somewhat cute. The last theorem in particular proved crucial in \cite{polydyn} for classifying invariant subvarieties for certain algebraic dynamical systems.

\section{Many definitions and a technical theorem.}
\label{moshethmsection}

Most of this section is definitions and basic facts that are well known for ACFA. Here, we generalize them for $T_A$ (when it exists) for an arbitrary totally transcendental 
theory $T$ with quantifier elimination and elimination of imaginaries. We suspect that it should be possible to replace "totally transcendental" by "stable". We end this section with a technical, not terribly surprising, but quite useful Theorem \ref{geninteralg}.

\subsection{Finite dominant rational functions in $T$.}
\label{prelude}

 The results from this section will be used for $L$-definable sets; in particular, \textbf{``irreducible'' and ``rational'' imply $L$-definable}.

 Before doing anything else, we explain our notation for germs of definable functions between types. 
 If a formula defines a function on the set of realizations of a type $p$, then by compactness it also defines a function on some definable set $\DD \supset p$. By shrinking $\DD$, we may assume that it has the same Morley rank as $p$, and Morley degree $1$. Then for any other definable $\BB \supset p$ with $\RM(\BB) = \RM(p)$ and $\dM(B) = 1$ we have $\RM(\BB \cap \DD) = \RM(\BB)$. Instead of fixing a type $p$ and considering definable functions on its realization, we prefer to fix such a $\BB$ and consider definable functions whose domains are ``\zdense'' in $\BB$. This is a purely cosmetic and ideological difference. Our definitions are inspired by, and lifted from, algebraic geometry, but they are subtly different. We fix a language $L$ and an $L$-theory $T$.

\begin{deff}
\label{findom}
 An $L$-definable set $\AAA$ is \emph{irreducible} if 
 $\dM(\AAA) = 1$.

 If $B$ contains the parameters defining an irreducible $\AAA$,
then a subset $S \subset \AAA$ is \emph{\zdense in $\AAA$ over $B$} if $S \not\subset \CC$ for any $\CC \subset \AAA$ definable over $B$ with $\RM(\CC) < \RM(\AAA)$.

 For irreducible $\AAA$ and $\BB$, a \emph{rational function from $\BB$ to $\AAA$} is an $L$-definable function $f: \BB_0 \rightarrow \AAA$ whose domain $\BB_0$ is a \zdense subset of $\BB$.

A rational function $f$ from $\BB$ to $\AAA$ is \emph{finite} if there is an integer $n$ such that
$$\RM( \{ b \in \BB \sthat n =|f^{-1}(f(b))| \} ) = \RM(\BB)$$
 This integer $n$ is then called the \emph{degree} of $f$.

A rational function $f$ from $\BB$ to $\AAA$ is \emph{dominant} if the image of $f$ is \zdense in $\AAA$.

Two rational functions $f$ and $g$ from $\BB$ to $\AAA$ are \emph{equivalent}, written $f \approx g$, if the set $\{ x \sthat f(x) = g(x) \}$ on which $f$ and $g$ agree is \zdense in $\BB$, over the parameters needed to define $\AAA$, $\BB$, $f$ and $g$.

\textbf{``Irreducible'' and ``rational'' each imply $L$-definable, and ``finite'' and ``dominant'' each imply rational.}

\end{deff}

When we abuse notation and say ``$S \subset \AAA$ is \zdense in $\AAA$ over $B$'' even though $\AAA$ is not irreducible, we mean that $S$ is \zdense in any irreducible subset of $\AAA$ of full Morley rank.
 We now list a few trivial observations about these notions, in no particular order.
 If $S$ is \zdense in $\AAA$ over $B$, then it is also \zdense in $\AAA$ over any $B'\subset B$.

\begin{llama}
\label{ineverybig}
 A subset $S$ of an irreducible $\AAA$ is \zdense in $\AAA$ over $B$ if and only if
$S \cap \CC$ is non-empty for any $B$-definable $\CC \subset \AAA$ with $\RM(\CC)=\RM(\AAA)$.
 If in addition $S$ is a subset of a complete type, $S \subset \CC$ for any such $\CC$.
 If $S$ is definable (over some parameter set $C$), then it is \zdense in $\AAA$ over $C$ if and only if $\RM(S) = \RM(\AAA)$.
\end{llama}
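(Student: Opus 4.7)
The plan is to exploit the single dichotomy that irreducibility of $\AAA$ provides: since $\dM(\AAA)=1$, a $B$-definable subset $\CC\subset\AAA$ satisfies $\RM(\CC)=\RM(\AAA)$ if and only if its complement $\AAA\setminus\CC$ (also $B$-definable) satisfies $\RM(\AAA\setminus\CC)<\RM(\AAA)$. That fact does all the real work; everything else is unpacking Definition \ref{findom}.

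For the first biconditional I would argue by contrapositives in both directions. If $S$ is not \zdense in $\AAA$ over $B$, Definition \ref{findom} supplies a $B$-definable $\DD\subset\AAA$ with $\RM(\DD)<\RM(\AAA)$ and $S\subset\DD$; taking $\CC:=\AAA\setminus\DD$ gives a $B$-definable subset of $\AAA$ of full Morley rank disjoint from $S$. Conversely, if some $B$-definable $\CC\subset\AAA$ with $\RM(\CC)=\RM(\AAA)$ satisfies $S\cap\CC=\emptyset$, then $S\subset\AAA\setminus\CC$, and by the dichotomy $\RM(\AAA\setminus\CC)<\RM(\AAA)$, contradicting \zdense ness.

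For the second claim, suppose $S$ is contained in (the realizations of) a single complete type $p$ over some parameter set containing $B$. Given any $B$-definable $\CC\subset\AAA$ with $\RM(\CC)=\RM(\AAA)$, the first part already supplies some $a\in S\cap\CC$; since $a$ realizes $p$, the formula defining $\CC$ lies in the restriction $p|_B$, hence is satisfied by every realization of $p$, so $S\subset p\subset\CC$.

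For the final claim, assume $S$ is $C$-definable. If $S$ is \zdense in $\AAA$ over $C$, then plugging $\CC:=S$ into the definition forbids $\RM(S)<\RM(\AAA)$, while $S\subset\AAA$ forces $\RM(S)\leq\RM(\AAA)$, so equality holds. Conversely, if $\RM(S)=\RM(\AAA)$ and $S\subset\DD$ for some $C$-definable $\DD\subset\AAA$, then $\RM(\DD)\geq\RM(S)=\RM(\AAA)$, ruling out a witness $\DD$ of strictly smaller rank and giving \zdense ness. There is no real obstacle; the only thing to keep straight is that the parameter set travels correctly from one clause to the next and that the irreducibility hypothesis is invoked precisely at the complementation step.
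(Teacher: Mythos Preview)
Your argument is correct and is exactly the natural one: the paper states this lemma without proof, treating it as an immediate consequence of Definition~\ref{findom} together with the fact that $\dM(\AAA)=1$ forces the complement of a full-rank $B$-definable subset to have strictly smaller rank. Your write-up makes explicit precisely that complementation dichotomy and the role of completeness of the type, which is all that is needed.
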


If $S$ is \zdense in $\AAA$ and $T$ is \zdense in $\CC$, then $S \times T$ is \zdense in $\AAA \times \CC$.

\begin{llama} \label{defzdense}
 Suppose that $E$ is an $\omega$-saturated model of $T_A$. If $S$ is $\lsig$-partial-type-definable over $E$, and $\AAA$ is irreducible and definable over $E$, and $S(E)$ is \zdense in $\AAA$ over $E$, then $S(N)$ is \zdense in $\AAA$ over $N$ for any $N \succ E$.
\end{llama}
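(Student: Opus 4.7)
The plan is to prove the contrapositive: suppose there is an $N$-$L$-definable set $\CC' \subseteq \AAA$ with $\RM(\CC') < \RM(\AAA)$ and $S(N) \subseteq \CC'$, and derive a contradiction with the Zariski-density of $S(E)$. Write $\CC' = \psi(x,\bar{c}) \cap \AAA$ for some $L$-formula $\psi$ and some tuple $\bar{c} \in N$. By definability of Morley rank in the $\omega$-stable theory $T$, the set $D(\bar{y}) := \{\bar y : \RM(\psi(x,\bar y) \cap \AAA) < \RM(\AAA)\}$ is $L$-definable over $E$, and by assumption $\bar{c} \in D(N)$.

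The central idea is to transfer the bad formula $\psi(x,\bar c)$ from $N$ down to an $E$-formula by exploiting the definability of the complete $L$-type $q := \text{tp}_L(\bar{c}/E)$; in the $\omega$-stable theory $T$ this type is $L$-definable over $E$. Applying the defining schema of $q$ to the $L$-formula $\phi(\bar y,z) := \psi(z,\bar y)$ yields an $E$-$L$-formula $\delta(z)$ such that $\phi(\bar y, z) \in q$ iff $\delta(z)$; unwinding this, for $s \in E$ the formula $\delta(s)$ holds precisely when $\psi(s,\bar{c})$ does. Since $S(E) \subseteq S(N) \subseteq \psi(x,\bar{c})$, I obtain $S(E) \subseteq \delta(E)$.

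It remains to show $\RM(\delta \cap \AAA) < \RM(\AAA)$, by a standard fiber-rank computation. Let $\BB$ be the $E$-$L$-definable locus of $q$, so $\RM(\BB) = \RM(q)$. Pick $s^*$ realizing the generic type $p$ of $\AAA$ over $E$ and $\bar{c}'$ realizing $q$ with $s^*$ and $\bar{c}'$ $L$-independent over $E$; by stationarity of $q$ (a type over the model $E$), such a $\bar{c}'$ exists. Then $(s^*,\bar{c}')$ is generic in $\AAA \times \BB$ over $E$, of Morley rank $\RM(\AAA) + \RM(\BB)$. Because $D \in q$, the generic fiber of $\psi(z,\bar y) \cap (\AAA \times \BB)$ over $\BB$ has rank strictly less than $\RM(\AAA)$, so additivity of Morley rank gives $\RM(\psi(z,\bar y) \cap (\AAA \times \BB)) < \RM(\AAA \times \BB)$, and hence the generic point $(s^*,\bar{c}')$ fails $\psi$. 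Definability of $q$ then forces $\delta(s^*)$ to fail, so $\delta \cap \AAA$ misses the generic of $\AAA$ and has rank $< \RM(\AAA)$.

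Combining, $\delta \cap \AAA$ is an $E$-$L$-definable subset of $\AAA$ of Morley rank $<\RM(\AAA)$ containing $S(E)$, contradicting the Zariski-density hypothesis. The main technical step is the fiber-rank computation in paragraph three; the only ingredient that requires real care is the definability of the $L$-type $q$ over $E$, and everything else is routine $\omega$-stability bookkeeping with Morley rank and stationarity.
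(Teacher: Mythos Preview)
Your proof is correct, but it takes a genuinely different route from the paper's. The paper argues by compactness and $\omega$-saturation: from $S(N) \subseteq \phi(x,c)$ one deduces that $(x \in S) \wedge \operatorname{tp}(c/E)$ implies $\phi(x,y)$, so by compactness some finite $q_0 \subset \operatorname{tp}(c/E)$ already suffices; then $\omega$-saturation of $E$ produces a $c' \in E$ realizing $q_0$ together with $\operatorname{tp}(c/\emptyset)$ (the latter included to preserve the Morley-rank bound, since $\RM(\phi(x,y))$ depends only on the parameter-free type of $y$), and $\phi(x,c')$ is the contradicting $E$-definable set. You instead use definability of the $L$-type $q = \operatorname{tp}_L(\bar c/E)$ to manufacture $\delta$ directly, and then a forking computation to bound its rank.

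Two remarks. First, your invocation of ``additivity of Morley rank'' is imprecise---Morley rank is not additive in arbitrary totally transcendental theories---and in any case unnecessary: since $\bar c' \in D$ and $s^*$ is independent from $\bar c'$ over $E$, you have $\RM(s^*/E\bar c') = \RM(\AAA)$, which directly forces $s^* \notin \psi(x,\bar c') \cap \AAA$ without passing through $\RM(\AAA \times \BB)$. The step ``definability of $q$ then forces $\delta(s^*)$ to fail'' is the real content here, and it relies on the standard fact that the non-forking extension of $q$ to $Es^*$ has the same defining schema as $q$.

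Second, and more interestingly, your argument nowhere uses the $\omega$-saturation hypothesis on $E$: definability of types over models holds for any model in a stable theory, as does the heir property. So your proof actually yields a slightly stronger statement than the one in the paper, at the cost of invoking heavier stability machinery where the paper gets by with bare compactness and saturation.
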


\begin{proof}
 Towards contradiction, suppose that $S \subset \CC$ for some $\CC$ defined by $\phi(x, c)$ for some $L$-formula $\phi(x,y)$ and some $c \in N$, with $\RM(\CC) < \RM(\AAA)$. Let $q(y)$ be the type of $c$ over $E$ and $q_1$ be the type of $c$ over the empty set. Now $(x \in S) \wedge q(y)$ implies $\phi(x,y)$, so by compactness $(x \in S) \wedge q_0(y)$ implies $\phi(x,y)$ for some finite $q_0 \subset q$. By saturation, there is some $c' \in E$ realizing $q_0 \cup q_1$, and now $\CC'$ defined by $\phi(x, c')$ contradicts the assumption that $S(E)$ is \zdense in $\AAA$ over $E$.
\end{proof}

 Because of this lemma we often abuse notation by saying ``\zdense'' without specifying the parameter set.

 If irreducible $\AAA$ and $\BB$ have the same Morley rank, then a rational function from $\BB$ to $\AAA$ is finite if and only if it is dominant. Somewhat conversely, if there is a finite dominant function between irreducible $\BB$ and $\AAA$, then they must have the same Morley rank.


\begin{llama}
 If $f_1, g_1 : \AAA \rightarrow \BB$ and $f_2, g_2: \BB \rightarrow \CC$ are finite dominant rational functions between irreducible sets, then both $g_2 \circ g_1$ and $f_2 \circ f_1$ are also finite dominant rational functions, and $\deg(g_2 \circ g_1) = \deg(g_2) \cdot \deg(g_1)$ .
 If in addition $f_i \approx g_i$ for $i = 1,2$, then also $g_2 \circ g_1 \approx f_2 \circ f_1$.
\end{llama}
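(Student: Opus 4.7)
The plan rests on a single workhorse lemma: if $h \colon \AAA \to \BB$ is a finite dominant rational function between irreducibles and $\DD \subset \BB$ is $L$-definable with $\RM(\DD) < \RM(\BB)$, then $\RM(h^{-1}(\DD)) < \RM(\AAA)$. This combines two standing facts: a finite dominant rational function between irreducibles forces $\RM(\AAA) = \RM(\BB)$ (noted just above the statement), and in a totally transcendental theory a definable set with uniformly finite fibers over a target has Morley rank at most that of the target. Since $T$ has quantifier elimination, images of $L$-definable sets under $L$-definable functions are $L$-definable, so the workhorse lemma interacts cleanly with Lemma \ref{ineverybig}.

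With this in hand, set $\AAA_0 := \AAA_{g_1} \cap g_1^{-1}(\BB_{g_2})$, the natural domain of $g_2\circ g_1$, where $\AAA_{g_1}$ and $\BB_{g_2}$ are the given \zdense domains of $g_1$ and $g_2$. The workhorse lemma applied to $\BB \setminus \BB_{g_2}$ shows $g_1^{-1}(\BB \setminus \BB_{g_2})$ has Morley rank $<\RM(\AAA)$, so $\AAA_0$ is \zdense in $\AAA$ and $g_2\circ g_1$ is rational. For dominance, if its image were contained in some $\DD \subset \CC$ with $\RM(\DD) < \RM(\CC)$, then $\AAA_0 \subset g_1^{-1}(g_2^{-1}(\DD))$, which by two applications of the workhorse lemma has Morley rank $< \RM(\AAA)$, contradicting \zdense-ness of $\AAA_0$.

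For finiteness and the degree formula, write $n_i = \deg(g_i)$, and let $\BB^\ast\subset\BB$, $\CC^\ast\subset\CC$ be the $L$-definable \zdense loci on which $g_1$, $g_2$ have fibers of size exactly $n_1$, $n_2$. Using QE, the set $\CC^{\ast\ast}\subset \CC^\ast$ of $c$ whose entire preimage under $g_2$ lies in $\BB^\ast$ is $L$-definable; by the workhorse lemma applied to $g_2$ and $\BB\setminus\BB^\ast$, its complement in $\CC^\ast$ has smaller Morley rank, so $\CC^{\ast\ast}$ is \zdense in $\CC$. For any $c \in \CC^{\ast\ast}$ one computes $|(g_2\circ g_1)^{-1}(c)| = \sum_{b \in g_2^{-1}(c)} |g_1^{-1}(b)| = n_1 n_2$, and $(g_2\circ g_1)^{-1}(\CC^{\ast\ast})$ is \zdense in $\AAA$ by one further application of the workhorse lemma, so $g_2 \circ g_1$ is finite of degree $n_1 n_2$.

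For the equivalence statement, let $S_i$ be the \zdense subset on which $f_i$ and $g_i$ agree. On $S_1 \cap g_1^{-1}(S_2) \cap \AAA_0$, one has $(g_2\circ g_1)(a) = g_2(g_1(a)) = f_2(g_1(a)) = f_2(f_1(a))$, and this intersection is \zdense by one last use of the workhorse lemma. The only real obstacle is bookkeeping: ensuring each auxiliary subset is genuinely $L$-definable (handled by QE) and that the various \zdense subsets are \zdense over a common parameter set so that intersecting them preserves \zdense-ness — this is taken care of by the remark after Lemma \ref{defzdense} about dropping the parameter set from the notation, or simply by expanding the base to contain all parameters defining $f_1,g_1,f_2,g_2$.
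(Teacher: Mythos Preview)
The paper does not prove this lemma at all: it is listed among the ``trivial observations'' following Definition~\ref{findom} and is simply asserted without argument. Your proposal therefore cannot be compared to a paper proof; rather, you have supplied the details the author deemed routine.

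Your argument is correct in substance. The workhorse lemma is the right organizing principle, and the bookkeeping for domains, dominance, the degree formula, and the equivalence claim all go through. One small imprecision: when you argue that $\CC^{\ast\ast}$ is Zariski-dense, you invoke ``the workhorse lemma applied to $g_2$ and $\BB\setminus\BB^\ast$'', but the workhorse lemma bounds the rank of a \emph{preimage}, whereas here you need that the \emph{image} $g_2(\BB_{g_2}\setminus\BB^\ast)$ has Morley rank below $\RM(\CC)$. That is of course true---definable functions never raise Morley rank in a totally transcendental theory---but it is a different (and easier) fact than your workhorse. With that one citation corrected, the proof is clean.
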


We now prove three lemmas about compositional components of finite dominant rational functions.
\label{funnylemmas}

\begin{deff}
 If $f \approx h \circ g$, we say that $g$ is an \emph{initial factor} of $f$, and $h$ is a \emph{terminal factor} of $f$. Each factor is \emph{non-trivial} if it is not equivalent to a bijection, and \emph{proper} if the other factor is not equivalent to a bijection. We call $f$ an \emph{extension} of $g$ for lack of better term.
 \end{deff}

\begin{llama}
\label{shareinit}
 Any two finite rational functions $f: \AAA \rightarrow \BB$ and $g: \AAA \rightarrow \CC$ have a \emph{maximal shared initial factor} $h : \AAA \rightarrow \DD$ such that any initial factor of both $f$ and $g$, is an initial factor of $h$.
\end{llama}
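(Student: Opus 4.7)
The plan is to take $h$ to be the fibered--product map $h(x):=(f(x),g(x))$, suitably targeted at an irreducible codomain $\DD$. I would work over a parameter set $B$ containing all the data defining $\AAA$, $\BB$, $\CC$, $f$, $g$, and fix a generic $a \in \AAA$ over $B$. The tuple $(f(a),g(a))$ is a point of $\BB \times \CC$, but that product need not be irreducible, so I would define $\DD$ to be its locus: the unique (up to equivalence) irreducible $L$-definable subset of $\BB \times \CC$ of Morley degree one having $(f(a),g(a))$ as a generic point over $B$. Then $h:\AAA \to \DD$ is defined by $h(x):=(f(x),g(x))$ on the intersection of the domains of $f$ and $g$, which is \zdense in $\AAA$.

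Next, I would verify that $h$ is a finite dominant rational function and a common initial factor of $f$ and $g$. Dominance onto $\DD$ is immediate from $h(a)$ being generic in $\DD$ by construction. Finiteness holds because the fibers of $h$ refine those of $f$, which are of uniformly bounded finite size. The restrictions $\pi_\BB:\DD \to \BB$ and $\pi_\CC:\DD \to \CC$ of the coordinate projections satisfy $\pi_\BB(h(x))=f(x)$ and $\pi_\CC(h(x))=g(x)$ on the \zdense domain of $h$; in particular $\pi_\BB(h(a))=f(a)$ is generic in $\BB$, and similarly for $\CC$, so both projections are dominant. Since $\RM(\DD)=\RM(a)=\RM(\BB)=\RM(\CC)$, they are finite by the equivalence of dominance and finiteness for rational functions between irreducibles of equal Morley rank noted just after Lemma \ref{defzdense}. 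Hence $f \approx \pi_\BB \circ h$ and $g \approx \pi_\CC \circ h$, exhibiting $h$ as a common initial factor.

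For maximality, given any common initial factor $h':\AAA \to \EE$ with $f \approx k \circ h'$ and $g \approx l \circ h'$, define $j:\EE \to \BB \times \CC$ by $j(y):=(k(y),l(y))$. For generic $x \in \AAA$ one has $j(h'(x))=(f(x),g(x))=h(x)$, so $j$ sends the generic $h'(a)$ of $\EE$ to the generic $h(a)$ of $\DD$; thus $j$ lands \zdense in $\DD$, is dominant onto $\DD$, and has finite fibers (a fiber of $j$ over a generic point of $\DD$ pulls back through $h'$ to a shared fiber of $f$ and $g$, which is finite). This gives $h \approx j \circ h'$, as required. The only subtle step is the first, namely picking the correct irreducible codomain $\DD$ inside the possibly reducible $\BB \times \CC$; once this is arranged, everything else is Morley-rank bookkeeping, and the entire argument lives inside the totally transcendental theory $T$ with no appeal to $\sigma$ or $T_A$.
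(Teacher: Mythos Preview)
Your proposal is correct and is exactly the paper's approach: the paper's proof consists of the single line ``Let $h(x) := (f(x), g(x))$'', and you have supplied the details (the choice of an irreducible codomain $\DD$, the verification that $h$ is finite and a common initial factor, and the maximality argument) that the paper leaves to the reader.
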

\begin{proof}
 Let $h(x) := (f(x), g(x) )$.
\end{proof}

 By its universality, $h$ is unique up to composing with bijections on the left.

\begin{llama}
\label{leastclosing}
 If $T$ eliminates imaginaries, and $f$ and $g$ are both initial factors of the same finite $H$, then there exists a \emph{least common extension} of $f$ and $g$, that is, a finite definable $h$ such that $f$ and $g$ are initial factors of $h$, and any extension of both $f$ and $g$ is also an extension of $h$. In particular, $h$ is an initial factor of $H$, but does not depend on the choice of $H$, and is unique up to composing with bijections on the left.
\end{llama}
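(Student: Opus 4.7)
My plan is to realize $h$ as the quotient of $\AAA$ by the equivalence relation generated by $E_f$ and $E_g$, where $E_f := \{(x,y) : f(x) = f(y)\}$ and similarly $E_g$. The universal property we want translates directly: a rational $h'$ is a common extension of $f$ and $g$ precisely when $E_f \cup E_g \subseteq E_{h'}$, so the minimal $h$ should correspond to the equivalence relation $E := E_f \vee E_g$ generated by $E_f \cup E_g$, namely the transitive closure of their union.

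The crucial step, and the main technical point, is that $E$ is $L$-definable (at least on a \zdense subset of $\AAA$), even though transitive closures are typically only $\bigvee$-definable. This is where the hypothesis that $f,g$ are initial factors of a common finite $H$ pays off: both $E_f$ and $E_g$ are contained in $E_H$, so $E \subseteq E_H$, and the $H$-fibers are finite of size at most $N := \deg(H)$. Thus any $E$-chain between two points stays inside a single $H$-fiber of size $\leq N$, and can be shortened to length at most $N$. Hence
\begin{equation*}
x\, E\, y \iff \exists x_1,\ldots,x_{N-1}\;\bigwedge_{i<N}\bigl(f(x_i)=f(x_{i+1}) \,\vee\, g(x_i)=g(x_{i+1})\bigr),
\end{equation*}
(with $x_0 = x$, $x_N = y$) giving a first-order definition of $E$ on the common domain of $f$ and $g$, which is \zdense in $\AAA$.

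Given definability, elimination of imaginaries in $T$ produces an $L$-definable set $\DD$ and an $L$-definable surjection $h : \AAA \to \DD$ whose fibers are exactly the $E$-classes; this $h$ is finite (fibers bounded by $N$) and, by shrinking $\AAA$ to a \zdense open where all constructions are well-defined, rational. Because $E_f \subseteq E$, the map $h$ is constant on fibers of $f$ and therefore factors as $h \approx k_f \circ f$ for some rational $k_f$ (definable by elimination of imaginaries from the relation ``$k_f(f(x)) = h(x)$''); symmetrically $h \approx k_g \circ g$. So $f$ and $g$ are both initial factors of $h$.

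For the universal property: if $h'$ is any common extension of $f$ and $g$, then $E_f, E_g \subseteq E_{h'}$, hence $E \subseteq E_{h'}$, so $h'$ is constant on $E$-classes and factors through $h$ as $h' \approx m \circ h$, exhibiting $h$ as an initial factor of $h'$. Taking $h' = H$ shows in particular that $h$ is an initial factor of $H$. Independence of the choice of $H$ is automatic: given another common finite extension $H'$, the resulting $h'$ satisfies the same universal property as $h$, and applying the universal property both ways yields rational bijections between their codomains, so $h$ is unique up to composing with a bijection on the left. The main obstacle, as noted, is ensuring the transitive closure is definable, and the finiteness of $H$ together with the bound on fiber sizes is exactly what makes this work.
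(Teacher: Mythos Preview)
Your proof is correct and follows essentially the same approach as the paper's: both construct $h$ as the quotient by the transitive closure of $E_f \cup E_g$, use the finiteness of $H$-fibers to show this transitive closure stabilizes after boundedly many steps (and is therefore $L$-definable), and then apply elimination of imaginaries. The only cosmetic difference is that the paper obtains the uniform bound on chain length via compactness rather than your direct pigeonhole argument on the fiber of size $\leq \deg(H)$.
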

\begin{proof}
 Consider the binary relation $R_0$ given by $(f(a) = f(a') \vee g(a) = g(a'))$ and start constructing its transitive closure: define inductively $R_{n+1}(a,c)$ to be given by $\exists b\, R_0(a,b) \wedge R_n(b, c)$ and note that $R_{n+1} \supset R_n$.
 For any $a$, consider finite sets $S_n(a)$ defined by
 $$S_n(a) := \{ b \sthat (a,b) \in R_n \} \subset \{ b \sthat H(a) = H(b) \}$$
 On the definable subset $\EE$ of the domain of $H$ where $H$ has uniformly finite fibers, the sizes of $S_n(a)$ are bounded by the degree of $H$. So for each $a$ there is some $n$ such that $S_{n+1}(a) = S_n(a)$, and then $S_m(a) = S_n(a)$ for all $m > n$. By compactness, some $N$ works for all $a$, and then $R_N$ is an equivalence relation.
 Applying elimination of imaginaries to it yields the desired function $h$ with domain $\EE$.
 \end{proof}

 Although the least common extension of $f$ and $g$ produced in the lemma does not depend on the common extension $H$ we start with, the \emph{existence} of the least common extension very much depends on the \emph{existence} of some finite common extension. For example, in algebraic geometry the vast majority of finite rational morphisms $f$ and $g$ do not admit a common finite extension.
  Even when $f$ and $g$ do admit a common extension, its degree may be much higher than the degrees of $f$ and $g$. But in one special case we can get around this.

\begin{llama}
\label{groupclosing}
 Suppose that $T$ eliminates imaginaries; that $\AAA$, $\BB$, and $\CC$ are irreducible ($L$-definable) groups; and that  $f: \AAA \rightarrow \BB$ and $g: \AAA \rightarrow \CC$ are finite ($L$-definable rational) group homomorphisms. Then there is a finite rational group homomorphism $h$ which is a common extension of $f$ and $g$ with $\deg(h) \leq \deg(f) \cdot \deg(g)$.
\end{llama}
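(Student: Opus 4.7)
The plan is to take $h$ to be the quotient of $\AAA$ by the (finite) subgroup generated by the kernels of $f$ and $g$. First, since $T$ is totally transcendental, the $L$-definable group $\AAA$ is $\omega$-stable, and a standard argument---the model-theoretic analog of Weil's theorem on group varieties---shows that any rational group homomorphism between $\omega$-stable groups extends uniquely to an everywhere-defined $L$-definable homomorphism. Concretely, the unique irreducible $L$-definable subset of $\AAA \times \BB$ of full Morley rank containing the graph of $f$ is a subgroup that projects onto $\AAA$ with fibers of uniform size $\deg(f)$. So we may assume that $f$ and $g$ are $L$-definable group homomorphisms defined on all of $\AAA$, with finite normal kernels $K_f, K_g$ of sizes $\deg(f), \deg(g)$ respectively.

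Let $N := K_f \cdot K_g \leq \AAA$; since $K_f$ and $K_g$ are both finite and normal, $N$ is a finite normal subgroup with $|N| \leq |K_f| \cdot |K_g| = \deg(f) \cdot \deg(g)$. Using elimination of imaginaries in $T$, realize the quotient $\AAA/N$ as an irreducible $L$-definable group, and let $h : \AAA \to \AAA/N$ be the quotient map; it is an everywhere-defined $L$-definable group homomorphism, and in particular a finite rational group homomorphism of degree $|N| \leq \deg(f) \cdot \deg(g)$.

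It remains to verify that $f$ and $g$ are initial factors of $h$. Since $K_f \leq N = \ker(h)$, the universal property of the quotient produces an $L$-definable group homomorphism $\alpha : f(\AAA) \to \AAA/N$ satisfying $\alpha \circ f = h$. Because $f$ is finite between irreducible sets of equal Morley rank it is dominant, so $f(\AAA)$ is \zdense in $\BB$; hence $\alpha$ extends to a rational map $\BB \to \AAA/N$ with $h \approx \alpha \circ f$. By symmetry, $h \approx \beta \circ g$ for an analogously defined $\beta : \CC \to \AAA/N$, and this completes the construction.

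The main technical hurdle is the extension step at the outset---passing from a rational group homomorphism with merely \zdense domain to an everywhere-defined one---since it is here that we use that our ambient groups are $\omega$-stable and not merely that $f$ is a rational map. Once the extension is in hand, taking the quotient by the product of the two kernels and verifying the claimed factorization is essentially bookkeeping.
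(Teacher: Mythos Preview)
Your proof is correct and follows essentially the same approach as the paper: form the product $N = K_f \cdot K_g$ of the two kernels and take $h$ to be the quotient map $\AAA \to \AAA/N$ via elimination of imaginaries. The paper's proof is terser, omitting your preliminary step of extending the rational homomorphisms to everywhere-defined ones and the explicit verification that $f$ and $g$ are initial factors of $h$, but the core construction is identical.
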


\begin{proof}
 The product $N := \{ a \cdot b \sthat f(a)=0 \wedge g(b)=0 \}$ of the kernels of $f$ and $g$ is a normal subgroup of $\AAA$ of size at most $\deg(f) \cdot \deg(g)$. Using elimination of imaginaries again, let $h$ be the quotient by $N$
\end{proof}

 We now say a few words about finite-to-finite correspondences, the central object of this paper.

\begin{deff}
 If in
  $$ \AAA \stackrel{f}{\leftarrow} \BB \stackrel{g}{\rightarrow} \CC$$
 the three definable sets are irreducible and the two rational functions are dominant,
 we say that the image $(f \boxtimes g) (\BB) \subset \AAA \times \CC$
 is a \emph{correspondence between $\AAA$ and $\CC$}.

 If in addition both $f$ and $g$ are finite, we say that the image is a
  \emph{finite-to-finite correspondence between $\AAA$ and $\CC$}.
\end{deff}

 We often abuse notation and say that $\BB$ is a correspondence from $\AAA$ to $\CC$; this is harmless.
 Finite-to-finite correspondences preserve many properties that definable bijections preserve. For example, two sets that admit a finite-to-finite correspondence between them must have the same Morley rank. It is often convenient to trim off the low Morley rank part of the correspondence where the projections do not have finite fibers.

\begin{llama}
\label{genericB0}
 If $$ \AAA \stackrel{f}{\leftarrow} \BB \stackrel{g}{\rightarrow} \CC$$
is a finite-to-finite correspondence, there is a \zdense subset $\BB_0 \subset \BB$ on which both $f$ and $g$ are defined and have finite fibers such that
 $$ \AAA \stackrel{f}{\leftarrow} \BB_0 \stackrel{g}{\rightarrow} \CC$$
is still a finite-to-finite correspondence.
\end{llama}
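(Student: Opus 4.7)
The plan is to construct $\BB_0$ by intersecting the domains of $f$ and $g$ with the full-rank subsets of $\BB$ on which each of $f$ and $g$ has constant fiber size. Because $f$ is finite, the definition supplies an integer $n_f$ such that $\BB_f := \{ b \in \BB \sthat |f^{-1}(f(b))| = n_f \}$ has $\RM(\BB_f) = \RM(\BB)$, and analogously we obtain a full-rank $\BB_g$ for $g$. The domains of $f$ and $g$ are \zdense in $\BB$ by hypothesis, so by Lemma \ref{ineverybig} they too have full Morley rank. Set
$$\BB_0 := \mathrm{dom}(f) \cap \mathrm{dom}(g) \cap \BB_f \cap \BB_g.$$

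Next I would verify that $\BB_0$ is \zdense in $\BB$ and itself irreducible. Since $\dM(\BB) = 1$, whenever a definable $X \subset \BB$ has $\RM(X) = \RM(\BB)$ the complement $\BB \setminus X$ must have strictly smaller Morley rank: otherwise $\BB = X \sqcup (\BB \setminus X)$ would have both pieces of full rank, forcing $\dM(\BB) = \dM(X) + \dM(\BB \setminus X) \geq 2$. Applying this to each of the four sets whose intersection defines $\BB_0$ gives $\RM(\BB \setminus \BB_0) < \RM(\BB)$; hence $\RM(\BB_0) = \RM(\BB)$, Lemma \ref{ineverybig} delivers \zdensity, and $\dM(\BB_0) \leq \dM(\BB) = 1$ forces $\dM(\BB_0) = 1$.

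It remains to check that $\AAA \xleftarrow{f} \BB_0 \xrightarrow{g} \CC$ is a finite-to-finite correspondence. Both $f|_{\BB_0}$ and $g|_{\BB_0}$ are defined on all of the irreducible $\BB_0$, so they are rational from $\BB_0$. Every fiber of $f|_{\BB_0}$ has size exactly $n_f$ by construction, making $f|_{\BB_0}$ finite of degree $n_f$, and similarly $g|_{\BB_0}$ is finite of degree $n_g$. For dominance, finiteness of fibers combined with the Morley-rank calculus of totally transcendental theories gives $\RM(f(\BB_0)) = \RM(\BB_0) = \RM(\BB)$, and the observation recorded just after Lemma \ref{defzdense} that a finite dominant map between irreducibles equates Morley ranks gives $\RM(\BB) = \RM(\AAA)$; hence $f(\BB_0)$ has the same Morley rank as the irreducible $\AAA$ and is therefore \zdense in it, and the same argument applies to $g$.

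The only step with any substance is the preservation of dominance, which rests on the standard fact that in totally transcendental theories finite-to-one definable maps preserve Morley rank; everything else is pure bookkeeping on the definitions, with $\dM(\BB) = 1$ doing the work of absorbing every lower-rank locus we throw away.
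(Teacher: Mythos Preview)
Your argument is correct and is exactly the routine verification the paper has in mind; the paper states this lemma without proof. One small imprecision: after restricting to $\BB_0$ the fibres of $f|_{\BB_0}$ need not have size \emph{exactly} $n_f$, since a point $b' \in f^{-1}(f(b))$ could fail to lie in $\BB_g$ and hence in $\BB_0$. All you actually get (and all you need) is that each fibre has size \emph{at most} $n_f$; finiteness of $f|_{\BB_0}$ in the sense of Definition~\ref{findom} then follows either by partitioning the irreducible $\BB_0$ according to fibre size, or more directly from the equivalence of finite and dominant for rational functions between irreducibles of the same Morley rank, which you have already established.
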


\begin{deff}
\label{groupcorrdef}
 When $\AAA$, $\BB$, and $\CC$ are definable groups, and $f$ and $g$ are definable group homomorphisms, the correspondence
 $$ \AAA \stackrel{f}{\leftarrow} \BB \stackrel{g}{\rightarrow} \CC$$
is called a \emph{group correspondence}.
\end{deff}

\subsection{Prolongations, Sharps, and \emph{very dense subsets}.}

 Now we are back to the setting with two languages $\lsig = L \cup \{ \sigma \}$ and two theories $T$ and $T_A$.

\begin{deff}

 For $S \subset \MM \models T_A$, we write $S^\sigma$ for $\{ \sigma(s) \sthat s \in S \}$.

 For $S \subset \MM \models T_A$, the set $S^+ := \{ (s, \sigma(s)) \sthat s \in S \}$ is called the \emph{first prolongation} of $S$.

 More generally, the \emph{$n$th prolongation} of $S$ is $\{ (s, \sigma(s), \ldots \sigma^n(s)) \sthat s \in S \}$.
\end{deff}

 The second prolongation is not first prolongation of the first prolongation.
If $S$ is definable by an $\lsig$-formula with parameters $a$, then $S^\sigma$ is defined by the same formula with parameters $\sigma(a)$, as $\sigma$ is an $\lsig$-automorphism of a model of $T_A$.
 In particular, when $S$ is $L$-definable, then so is $S^\sigma$.
 Since projection onto the first coordinate is a definable bijection from $S^+$ to $S$, all $L_{\sigma}$-properties invariant under definable bijection, such as ranks, pass to prolongations. The natural bijection $x \mapsto (x, \sigma(x))$ from $S$ to $S^+$ is also called a prolongation.

\begin{deff}
 If an $L$-definable set $\BB$ comes with rational, $L$-definable functions $f$ and $g$ to $\AAA$ and $\AAA^\sigma$, (for example, when $\BB \subset \AAA \times \AAA^\sigma$ and $f$ and $g$ are projections), we write
 $(\AAA,\BB)^{sh} := \{ a \sthat (a, \sigma(a)) \in (f \boxtimes g) \BB\}$.

 When $\BB$ is a finite-to-finite correspondence between $\AAA$ and $\AAA^\sigma$, we write $(\AAA,\BB)^\sharp$ instead of $(\AAA,\BB)^{sh}$.

  \textbf{Whenever we write $(\AAA,\BB)^\sharp$, we are making this assumption, namely that $\AAA$ and $\BB$ have Morley degree $1$, and the two functions from $\BB$ to $\AAA$ and $\AAA^\sigma$ are finite, dominant rational functions.}

 When $\BB$ is the graph of some function $f: \AAA \rightarrow \AAA^\sigma$, we write $(\AAA, f)^\sharp$ instead of $(\AAA, \BB)^\sharp$.

\end{deff}

\begin{llama}
\label{sharpacl}
 For any $(\AAA,\BB)^\sharp$ there is a \zdense $\BB_0 \subset \BB$ such that  $\aclsig(s) = \acl(s)$ for any $s \subset (\AAA,\BB_0)^\sharp$.
\end{llama}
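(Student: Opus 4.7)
The plan is to reduce everything to Fact \ref{basicfacts}(\ref{alclosure}), which gives $\aclsig(s) = \acl(\bigcup_{i\in\mathbb{Z}} \sigma^i(s))$. To conclude $\aclsig(s)=\acl(s)$ it therefore suffices to show $\sigma^i(s)\in\acl(s)$ for every $i\in\mathbb{Z}$, and this in turn will follow from the two base cases $\sigma(s)\in\acl(s)$ and $s\in\acl(\sigma(s))$ iterated under the action of $\sigma$.

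First I would apply Lemma \ref{genericB0} to pass to a \zdense $\BB_0\subset\BB$ over the same parameters on which both $f$ and $g$ are defined and have uniformly bounded finite fibers (of size at most $\deg(f)$ and $\deg(g)$ respectively). By the definition of $(\AAA,\BB_0)^\sharp$, any $s$ in it is accompanied by some witness $b\in\BB_0$ with $f(b)=s$ and $g(b)=\sigma(s)$. The $L$-definable set
\[
S_+(s) := \{\, g(b') : b'\in\BB_0,\ f(b')=s\,\}
\]
is $L$-definable from $s$ (over the fixed parameters defining $\BB_0$), has size at most $\deg(f)$, and contains $\sigma(s)$, so $\sigma(s)\in\acl(s)$. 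Symmetrically, the finite $L$-definable set $S_-(s):=\{f(b'):b'\in\BB_0,\ g(b')=\sigma(s)\}$ contains $s$ and has size at most $\deg(g)$, giving $s\in\acl(\sigma(s))$, equivalently $\sigma^{-1}(s)\in\acl(s)$ after applying $\sigma^{-1}$.

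To propagate these to all iterates, I would apply $\sigma$ to the formula witnessing $\sigma(s)\in\acl(s)$: since $\sigma$ is an $\lsig$-automorphism, $\BB_0^\sigma$ is defined by the $\sigma$-image of the defining formula, and the analogous relation gives $\sigma^{2}(s)\in\acl(\sigma(s))\subseteq\acl(s)$. Induction on $|i|$ in both directions then yields $\sigma^i(s)\in\acl(s)$ for every $i\in\mathbb{Z}$, and Fact \ref{basicfacts}(\ref{alclosure}) completes the argument.

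The only mildly delicate point, and what I would expect to be the main bookkeeping obstacle, is making sure the $L$-parameters used to define $\BB_0$ transport cleanly under iteration of $\sigma$; this is handled by enlarging the fixed parameter base to a $\sigma$-closed one whose $L$-algebraic closure is the same (so that $\acl(s)$ is unaffected). Once that is set up, the argument is entirely a matter of combining finiteness of the fibers of $f$ and $g$ on $\BB_0$ with the characterization of $\aclsig$.
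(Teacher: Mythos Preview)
Your proposal is correct and follows essentially the same route as the paper's proof: obtain $\BB_0$ from Lemma~\ref{genericB0}, use finiteness of the fibers of $f$ and $g$ on $\BB_0$ to see that $\sigma(a)$ and $\sigma^{-1}(a)$ are $L$-algebraic over $a$, and conclude via Fact~\ref{basicfacts}(\ref{alclosure}). The paper's version is simply terser---two sentences---and leaves the iteration and the explicit description of the witnessing finite sets implicit; your write-up spells these out, which is fine. One small remark: the statement is phrased for subsets $s\subset(\AAA,\BB_0)^\sharp$, whereas your argument is written for a single element; the extension is immediate since the conclusion for each element gives $\sigma^i(s)\subset\acl(s)$ for the whole set. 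Your closing worry about transporting the parameters of $\BB_0$ under $\sigma$ is not really an obstacle here and the paper does not address it: the parameters are fixed once and for all, and $\acl$ and $\aclsig$ are computed over them throughout.
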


\begin{proof}
 We obtain $B_0$ from Lemma \ref{genericB0}.
 For any $a \in (\AAA,\BB_0)^\sharp$, $\sigma(a)$ and $\sigma^{-1}(a)$ are already $L$-algebraic over $a$, and the conclusion follows from Fact \ref{basicfacts}(2).
\end{proof}

\begin{deff}
 $S \subset (\AAA,\BB)^\sharp$ is \emph{very dense over $E$} if $S$ is \zdense in $\AAA$ over $E$, or, equivalently, if the first prolongation $S^+$ is \zdense in $\BB$ over $E$.
\end{deff}

\begin{prop}
\label{sharpdense}
 In any model $E \models T_A$, the definable set $(\AAA,\BB)^\sharp$ is very dense in itself over the whole model $E$.
\end{prop}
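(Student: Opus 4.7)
The plan is to translate the statement into realizability in $E$ of a single $\lsig$-existential formula, and then invoke existential closure. Fix any $E$-definable $\CC \subset \AAA$ with $\RM(\CC) < \RM(\AAA)$; showing $(\AAA,\BB)^\sharp \not\subset \CC$ amounts to realizing in $E$ the $\lsig$-existential formula
$$\phi(x) \;:=\; x \in \AAA \,\wedge\, \neg (x \in \CC) \,\wedge\, \exists b\bigl(b \in \BB \,\wedge\, f(b) = x \,\wedge\, g(b) = \sigma(x)\bigr).$$
Since $E \models T_A$ is existentially closed in every $\lsig$-extension that models $\tsig$, it suffices to produce such an extension in which $\phi$ has a witness.

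To build the extension, pass to a sufficiently $L$-saturated and $L$-strongly homogeneous $\mathfrak{U} \succ_L E$. By $L$-saturation, pick $b \in \BB(\mathfrak{U})$ generic over $E$, and set $a_0 := f(b)$ and $a_1 := g(b)$. Since $f$ and $g$ are finite and dominant, $a_0$ is $L$-generic in $\AAA$ over $E$ (so $a_0 \notin \CC$) and $a_1$ is $L$-generic in $\AAA^\sigma$ over $E$.

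The heart of the argument is to check that the partial map $\sigma|_E \cup \{a_0 \mapsto a_1\}$, defined on $E \cup \{a_0\}$ with image in $E \cup \{a_1\}$, is $L$-elementary. Since $T$ is totally transcendental, $\dM(\AAA) = \dM(\AAA^\sigma) = 1$, and $E$ is a model of $T$, the generic types of $\AAA$ and $\AAA^\sigma$ over $E$ are each a \emph{unique} complete type, and these coincide with $\operatorname{tp}_L(a_0/E)$ and $\operatorname{tp}_L(a_1/E)$ respectively. Applying $\sigma$ to parameters is an $L$-automorphism $E \to E$ that carries $\AAA$ to $\AAA^\sigma$ and preserves Morley rank, so by uniqueness it must send $\operatorname{tp}_L(a_0/E)$ to $\operatorname{tp}_L(a_1/E)$. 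By quantifier elimination in $T$, this is exactly the type-preservation condition that makes the partial map $L$-elementary.

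Finally, $L$-strong homogeneity of $\mathfrak{U}$ extends the partial map to an $L$-automorphism $\tilde\sigma$ of $\mathfrak{U}$, so $(\mathfrak{U}, \tilde\sigma) \models \tsig$ extends $(E,\sigma)$ in $\lsig$; and $\tilde\sigma(a_0) = a_1$ gives $(a_0, \tilde\sigma(a_0)) = (f(b), g(b)) \in (f \boxtimes g)\BB$, so $a_0$ realizes $\phi$ in $(\mathfrak{U}, \tilde\sigma)$. Existential closure of $E$ in this $\lsig$-extension transfers the witness down to $E$. The delicate step is the middle one: it depends essentially on $\dM(\AAA) = 1$ (to pin the generic type of $\AAA$ down to a single complete type) and on total transcendence of $T$ (so that Morley rank is available and the generic type of $\AAA$ over the model $E$ is stationary).
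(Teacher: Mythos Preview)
Your proof is correct and follows essentially the same approach as the paper's: both reduce to existential closure of $E$ in models of $\tsig$, build a suitable $\tsig$-extension by picking an $L$-generic of $\AAA$ (you project it from a generic of $\BB$, the paper lifts it to $\BB$), observe that its image lands on the unique generic of $\AAA^\sigma$ so that $\sigma|_E$ together with $a_0 \mapsto a_1$ is partial $L$-elementary, and then extend to an automorphism by saturation/homogeneity. The only cosmetic differences are that you fix a single small $\CC$ and realize one existential formula at a time (the paper notes density is a scheme of existentials and realizes them all with one witness), and you extend $\sigma$ to all of $\mathfrak{U}$ via strong homogeneity while the paper extends it to a substructure; neither affects the argument.
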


\begin{proof}

 We translate the proof of Theorem 1.1 in \cite{ChaHru1} into our notation and note that it has nothing to do with fields.

 Suppose $\AAA$, $\BB$, and $\AAA^\sigma$ are $L$-definable over $E \models T_A$.

 First note that for any definable $S$, the statement ``$S$ is \zdense in $\AAA$'' is a conjunction of a set of existential first-order formulae over $E$: for each $L$-definable low-Morley-rank $\CC$, we demand that there exist something in $S$ but not in $\CC$. (Remember that $T_A$ is model-complete, so $S$ is defined by an existential formula; and $T$ eliminates quantifiers, so $\CC$ is defined by a quantifier-free formula.)

 Since models of $T_A$ are existentially closed in the class of models of $T_\sigma$, it is sufficient to find a model $N$ of $T_\sigma$ such that $E$ embeds into $N$ and  $(\AAA,\BB)^\sharp( N)$ is very dense in $(\AAA,\BB)^\sharp$ over $E$.

 We start with a sufficiently saturated model $M$ of $T$ such that $E \upharpoonleft_L$ embeds into $M$. We find in $M$ an element $a$ realizing the (unique!) generic $L$-type $t_\AAA$ of $\AAA$. We find $a'$ such that $(a, a') \in \BB$. Since $\BB \subset \AAA \times \AAA^\sigma$, $a'$ belongs to the $L$-definable set $\AAA^\sigma$. The generic $L$-type of $\AAA^\sigma$ is ${t_\AAA}^\sigma$. Since $a$ and $a'$ are $L$-interalgebraic over $E$, they have the same Morley rank, so $a'$ realizes the generic $L$-type of $\AAA^\sigma$. In other words, the map $\tau$ that takes $E$ to itself by $\sigma$, and takes $a$ to $a'$ is a partial-$L$-elementary map from $M$ to $M$. By the saturation of $M$, we can extend $\tau$ to make it an $L$-automorphism of an $L$-substructure $N$ of $M$.

Now $(N, \tau) \models T_\sigma$ is the model we want, and $(a,a') \in (\AAA,\BB)^\sharp (N)$ is all by itself very dense in $(\AAA,\BB)^\sharp$ over $E$.
 \end{proof}

When $M \prec N \models T_A$ and $S$ is $\lsig$-definable over $M$ and $(\AAA, \BB)^\sharp$ is defined over $M$, a similar argument show that $S(M)$ is very dense in $(\AAA,\BB)^\sharp$ over $M$ if and only if $S(N)$ is very dense in $(\AAA,\BB)^\sharp$ over $N$. If $S$ is only type-definable, the same holds for a slightly-saturated $M$. Thus, we often abuse notation by saying ``very dense'' without specifying the parameter set.

\subsection{$\sigma$-degree.}

 The notion of $\sigma$-degree has been used in difference algebra for many decades, and translates naturally to our setting. However, the tight connection between $\sigma$-degree and $U$-rank in ACFA does not generalize to $T_A$.


\begin{deff}
 Let $M \models T_A$, $A \subset M$, $a \in M$;
 if for some $N$,
 $$\sigma^{N}(a) \in \acl( A \cup \{a, \sigma(a), \ldots \sigma^{N-1}(a) \})$$
then the \emph{$\sigma$-degree of $a$ over $A$} is
 $$\deg_\sigma(a/A) := \RM((a, \sigma(a), \ldots \sigma^N(a)) /\aclsig(A) )$$
that is,
 $$ \min \{ \RM(\BB) \sthat (a, \sigma(a), \ldots \sigma^N(a)) \in \BB \mbox{ and $\BB$ is $L$-definable over $\acl_\sigma(A)$} \}$$
 and we say that $\deg_\sigma(a/A)$ is \emph{defined}.
  Otherwise, we say that $\deg_\sigma(a/A) = \infty$.
\end{deff}

It is clear that $\sigdeg(a/A)$ is the property of the quantifier-free $L_\sigma$-type of $a$ over $\aclsig(A)$; as usual, we can extend the definition to partial types, including definable sets, by taking the supremum over realizations in a sufficiently large model. As with Morley rank, this supremum is attained in a sufficiently saturated model.
 It is also clear that for any $B \supset A$ we have $\sigdeg(a/B) \leq \sigdeg(a/A)$, since on the left hand side the minimum is taken over a larger family of definable sets, and $N$ also only decreases.

\begin{llama} \label{sigdegprolong}
 The $\sigma$-degree is invariant under prolongation.
\end{llama}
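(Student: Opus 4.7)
The plan is to reduce the general statement to the case of the first prolongation $a^+ := (a, \sigma(a))$, since higher prolongations iterate this construction (and reordering or duplicating coordinates does not affect $L$-algebraic closure or Morley rank of a tuple).

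The key observation is that for every $i \geq 0$, $\sigma^i(a^+) = (\sigma^i(a), \sigma^{i+1}(a))$, so the tuple
$$ (a^+,\sigma(a^+),\ldots,\sigma^{N-1}(a^+)) $$
consists of exactly the same coordinates (with adjacent repetitions) as
$$ (a, \sigma(a), \ldots, \sigma^{N}(a)). $$
These two tuples are therefore interdefinable over the empty set in $L$, hence have the same $L$-algebraic closure and the same Morley rank over any base.

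Using this observation in two steps: First, translating the algebraicity condition in the definition of $\sigdeg$, the condition ``$\sigma^N(a) \in \acl(A \cup \{a, \sigma(a), \ldots, \sigma^{N-1}(a)\})$'' is equivalent to ``$\sigma^{N-1}(a^+) \in \acl(A \cup \{a^+, \sigma(a^+),\ldots, \sigma^{N-2}(a^+)\})$'', so $\sigdeg(a/A)$ is defined if and only if $\sigdeg(a^+/A)$ is defined, with witnessing indices differing by one. Second, once defined, the Morley ranks computing the two $\sigma$-degrees agree by the interdefinability noted above:
$$ \RM\bigl((a^+,\ldots,\sigma^{N-1}(a^+))/\aclsig(A)\bigr) = \RM\bigl((a,\ldots,\sigma^{N}(a))/\aclsig(A)\bigr). $$
Hence $\sigdeg(a^+/A) = \sigdeg(a/A)$, and by induction the same holds for the $n$th prolongation.

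There is no real obstacle here: the entire proof is bookkeeping, and the only thing to be careful about is the index shift when passing between $N$ copies of $\sigma$ applied to $a^+$ and $N{+}1$ copies applied to $a$, together with the invariance of Morley rank of a tuple under permutation and deletion of duplicated coordinates.
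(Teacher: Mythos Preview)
Your proof is correct and follows essentially the same approach as the paper, which gives only a one-sentence sketch (``obtaining the $m$th prolongation of the $n$th prolongation from the $(m+n)$th prolongation by adding new variables and carefully setting them equal to appropriate old ones''). Your parenthetical about reordering and duplicating coordinates correctly handles the paper's caveat that the second prolongation is not literally the first prolongation of the first prolongation, so the induction goes through.
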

\begin{proof}
 This comes down to obtaining the $m$th prolongation of the $n$th prolongation from the $(m+n)$th prolongation by adding new variables and carefully setting them equal to appropriate old ones.
\end{proof}

\begin{llama}\label{prolongvdense}
 Suppose that $S$ is defined by a complete $\lsig$-type over a model $E$. If $S$ has defined $\sigma$-degree over $E$, then some prolongation of $S$ is very dense in some $(\AAA, \BB)^\sharp$ defined over $E$.
\end{llama}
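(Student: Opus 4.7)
The plan is to construct $\AAA$ and $\BB$ as the $L$-loci of suitable prolongations of a realization of $S$. Fix $a \models S$ and let $N$ be the least integer with $\sigma^N(a) \in \acl(E \cup \{a,\sigma(a),\ldots,\sigma^{N-1}(a)\})$, which exists by hypothesis. Set $c := (a,\sigma(a),\ldots,\sigma^{N-1}(a))$, let $\AAA$ be the $L$-locus of $c$ over $E$, and let $\BB$ be the $L$-locus of $(c,\sigma(c))$ over $E$. Because $T$ is totally transcendental with elimination of imaginaries, complete $L$-types over models are stationary, so both $\AAA$ and $\BB$ are $E$-definable of Morley degree one, with $c$ generic in $\AAA$ and $(c,\sigma(c))$ generic in $\BB$.

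Next I would verify that $\BB$ is a finite-to-finite correspondence between $\AAA$ and $\AAA^\sigma$. The first projection $\BB \to \AAA$ is dominant because $c$ is generic in $\AAA$, and finite because each entry of $\sigma(c) = (\sigma(a),\ldots,\sigma^N(a))$ lies in $\acl(E,c)$ by the choice of $N$. For the second projection $\BB \to \AAA^\sigma$, I would use that $\sigma$ is surjective on models of $T_A$---a consequence of existential closure of $T_A$-models in $T_\sigma$-models, implicit in Fact~\ref{basicfacts}(2) where $\sigma^{-i}$ is used---so $\sigma(E)=E$ and $\sigma(c)$ is generic in $\AAA^\sigma$ over $E=\sigma(E)$, giving dominance. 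For finiteness, additivity of Morley rank in the totally transcendental theory $L$ gives
\[ \RM((c,\sigma(c))/E) \;=\; \RM(\sigma(c)/E) + \RM(c/E,\sigma(c)). \]
The entries of $(c,\sigma(c))$ are precisely those of $a^+_N$ up to duplication, so $\RM((c,\sigma(c))/E) = \RM(a^+_N/E) = \sigdeg(a/E) = \RM(\AAA^\sigma) = \RM(\sigma(c)/E)$; hence $\RM(c/E,\sigma(c)) = 0$, i.e., $c \in \acl(E,\sigma(c))$.

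Finally, the $(N-1)$-th prolongation of $S$ consists of all tuples $c' = (a',\sigma(a'),\ldots,\sigma^{N-1}(a'))$ for $a' \models S$; each such $c'$ realizes the same complete $\lsig$-type over $E$ as $c$, so $(c',\sigma(c')) \in \BB$ and therefore $c' \in (\AAA,\BB)^\sharp$. Because $c$ is generic in $\AAA$ over $E$, no proper $E$-definable subset of $\AAA$ of lower Morley rank can contain these realizations, so the prolongation is \zdense in $\AAA$, which is by definition the same as being very dense in $(\AAA,\BB)^\sharp$. The main delicate point is establishing finiteness of the second projection: one implicitly wants to invert the algebraic relation that defines $N$ in order to express $a$ through its later iterates, and the combination of Morley rank additivity with surjectivity of $\sigma$ on $T_A$-models is the cleanest way to sidestep constructing this inverse by hand.
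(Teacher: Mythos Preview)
Your construction is the same as the paper's: take $c$ to be the $(N-1)$-th prolongation, $\AAA$ its $L$-locus over $E$, and $\BB$ the $L$-locus of $(c,\sigma(c))$. Where you go beyond the paper's one-line sketch is in trying to verify that both projections of $\BB$ are finite. The first projection is fine for the reason you give.

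The argument for the second projection is the problem. You invoke ``additivity of Morley rank in the totally transcendental theory,'' writing $\RM((c,\sigma(c))/E) = \RM(\sigma(c)/E) + \RM(c/E,\sigma(c))$, but Morley rank does \emph{not} satisfy the Lascar inequality in arbitrary totally transcendental theories. DCF already breaks it: for $a$ differentially transcendental over $E$ one has $\RM(a/E)=\RM(\delta(a)/E)=\RM((a,\delta(a))/E)=\omega$, yet $a\notin\acl(E,\delta(a))$ since $a,\delta(a),\delta^2(a),\ldots$ are algebraically independent over $E$. In fact this yields a counterexample to the lemma itself in DCFA: a complete type over $E$ containing $\sigma(x)=\delta(x)$ with $x$ differentially transcendental has defined $\sigma$-degree (witnessed by $N=1$), but every prolongation $c_k=(a,\delta(a),\ldots,\delta^k(a))$ is $L$-interdefinable with $a$, and being very dense in any $(\AAA,\BB)^\sharp$ would force $a$ and $\sigma(a)=\delta(a)$ to be $L$-interalgebraic over $E$, which they are not. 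So the delicate point you correctly flagged is a genuine gap, shared with the paper's sketch; your argument does go through whenever Morley rank is additive in $T$ (as in ACF, the case of primary interest), or if one adds the backward hypothesis $\sigma^{-1}(a)\in\acl(E,a,\ldots,\sigma^{N-1}(a))$.
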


\begin{proof}
 Take the $(N-1)$st prolongation of $S$ for $N$ from the definition of $\sigma$-degree, find some (irreducible for $S$ a complete type over a model) $\AAA$ in which the $(N-1)$st prolongation of $S$ is \zdense, and get $\BB$ from the formula witnessing that $\sigma^{N}(a) \in \acl( E \cup \{a, \sigma(a), \ldots \sigma^{N-1}(a) \})$.
\end{proof}

\begin{llama}
 When $\sigma$-degree is defined, it witnesses forking.
\end{llama}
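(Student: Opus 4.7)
The strategy combines three ingredients already available in the excerpt. Fact \ref{basicfacts}(\ref{indepinTA}) reduces forking in $T_A$ to $L$-independence of $\aclsig$-closures; Fact \ref{basicfacts}(\ref{alclosure}) describes $\aclsig$ as $L$-closure of $\sigma$-iterates; and in a totally transcendental $L$-theory, $L$-independence of a tuple from a set is detected by preservation of $L$-Morley rank. The goal is to show that for $B \supseteq A$ with $\sigdeg(a/A)$ defined, $a \downarrow_A B$ in $T_A$ if and only if $\sigdeg(a/A) = \sigdeg(a/B)$.

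Pick $N$ witnessing that $\sigdeg(a/A)$ is defined, and set $\bar{a} := (a, \sigma(a), \ldots, \sigma^N(a))$. Since $B \supseteq A$, the same $N$ works for $B$; Lemma \ref{sigdegprolong} then yields $\sigdeg(a/A) = \RM(\bar{a}/\aclsig(A))$ and $\sigdeg(a/B) = \RM(\bar{a}/\aclsig(AB))$, both ranks computed in $L$.

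The main reduction is that $\bar{a}$ is $L$-interalgebraic with $\aclsig(Aa)$ over $\aclsig(A)$. The inclusion $\bar{a} \subseteq \aclsig(Aa)$ is immediate. Conversely, by Fact \ref{basicfacts}(\ref{alclosure}), every element of $\aclsig(Aa)$ lies in $\acl(\aclsig(A) \cup \{\sigma^i(a) : i \in \mathbb{Z}\})$. For $i \geq 0$ I iterate the defining relation $\sigma^N(a) \in \acl(A \cup \{a, \ldots, \sigma^{N-1}(a)\})$ forward by applying $\sigma$, using that $\sigma$ preserves $\acl$ and $\sigma(\aclsig(A)) \subseteq \aclsig(A)$, to place each $\sigma^i(a)$ with $i \geq 0$ inside $\acl(\aclsig(A) \cup \bar{a})$. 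For $i < 0$, Lemma \ref{sigdegprolong} says the $\sigma$-degree is invariant under prolongation; applied to longer backward-and-forward tuples, this forces $\RM$ not to increase when adjoining $\sigma^{-k}(a)$, so each such element is $L$-algebraic over $\aclsig(A) \cup \bar{a}$.

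Given this interalgebraicity, Fact \ref{basicfacts}(\ref{indepinTA}) recasts $a \downarrow_A B$ as $\bar{a} \downarrow^{L}_{\aclsig(A)} \aclsig(AB)$; and since $T$ is totally transcendental, this $L$-independence is in turn equivalent to $\RM(\bar{a}/\aclsig(A)) = \RM(\bar{a}/\aclsig(AB))$, i.e.\ to $\sigdeg(a/A) = \sigdeg(a/B)$. I expect the interalgebraicity claim, in particular the negative-iterate half where a direct syntactic manipulation of the defining algebraic relation does not readily give the statement and one must instead appeal to rank preservation from Lemma \ref{sigdegprolong}, to be the main technical step; the rest is bookkeeping built from the facts already quoted.
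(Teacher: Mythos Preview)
Your proof is correct and follows essentially the same route as the paper: reduce $T_A$-independence to $L$-independence of $\aclsig$-closures via Fact~\ref{basicfacts}(\ref{indepinTA}), identify $\aclsig(Ca)$ with $\acl(\aclsig(C)\cup\{a,\ldots,\sigma^{N-1}(a)\})$, and read off the equivalence with preservation of $\RM$ of the finite tuple. The paper simply asserts the interalgebraicity in one line; your added justification is welcome, but the negative-iterate step is cleaner argued directly from the fact that $\sigma$ is an $L$-automorphism fixing $\aclsig(A)$ setwise (so $\RM(\sigma^{-1}(a),a,\ldots,\sigma^{N-1}(a)/\aclsig(A))=\RM(a,\ldots,\sigma^{N}(a)/\aclsig(A))=\RM(a,\ldots,\sigma^{N-1}(a)/\aclsig(A))$) rather than via Lemma~\ref{sigdegprolong}, which as stated concerns only forward prolongations.
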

\begin{proof}
 We need to show that if $\sigdeg(a/C)$ is defined, then $a$ is independent from $B$ over $C$ if and only if $\sigdeg(a/BC) = \sigdeg(a/C)$.

 We may assume without loss of generality that $C = \acl_\sigma(C)$ and $B = \acl_\sigma(B)$ and $C \subset B$. Since $\sigdeg(a/C)$ is defined, let $N$ be such that $\sigma^{N}(a) \in \acl( C \cup \{a, \sigma(a), \ldots \sigma^{N-1}(a) \})$, and note that $\aclsig(Ca) = \acl( C \cup \{a, \sigma(a), \ldots \sigma^{N-1}(a) \})$.

  From Fact \ref{basicfacts}.4, $a$ is independent from $B$ over $C$ in the sense of $T_A$ if and only if
 $\acl_\sigma(Ca)$ is independent from $B$ over $C$ in the sense of $T$, which happens if and only if $\RM( (a, \sigma(a), \ldots \sigma^{N-1}(a)) /B) = \RM( (a, \sigma(a), \ldots \sigma^{N-1}(a)) /C)$, which happens if and only if $\sigdeg(a/B) = \sigdeg(a/C)$, and we are done.
\end{proof}

If follows immediately that
\begin{sheep}
 The $\sigma$-degree is an upper bound on $U$-rank.
\end{sheep}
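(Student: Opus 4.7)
The plan is a straightforward induction on $\sigdeg(a/A)$, combined with the previous lemma which tells us $\sigma$-degree strictly drops on forking extensions (when it is defined).

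First I would observe that the case $\sigdeg(a/A) = \infty$ is trivial, so we may assume $\sigdeg(a/A) = n \in \mathbb{N}$, and set up an induction on $n$.

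For the base case $n = 0$: if $\sigdeg(a/A) = 0$, then taking $N$ as in the definition, the tuple $(a, \sigma(a), \ldots, \sigma^N(a))$ has Morley rank $0$ over $\aclsig(A)$, so it lies in $\acl(\aclsig(A)) = \aclsig(A)$ by Fact \ref{basicfacts}(\ref{alclosure}). In particular $a \in \aclsig(A)$, so $U(a/A) = 0$.

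For the inductive step: assume the corollary holds for all types with $\sigma$-degree strictly less than $n$, and suppose $\sigdeg(a/A) = n$. Let $B \supset A$ be any set over which $tp(a/B)$ is a forking extension of $tp(a/A)$. By the previous lemma, $\sigdeg(a/B) < \sigdeg(a/A) = n$. Note also that $\sigdeg(a/B)$ is defined (it is bounded above by $\sigdeg(a/A)$, as was observed just after the definition of $\sigma$-degree). By the inductive hypothesis, $U(a/B) \leq \sigdeg(a/B) \leq n - 1$. Since this bound holds for every forking extension, the definition of Lascar rank gives $U(a/A) \leq n = \sigdeg(a/A)$.

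I do not anticipate any real obstacle: the whole content is packaged in the previous lemma (strict drop of $\sigma$-degree on forking) and Fact \ref{basicfacts}(\ref{alclosure}) (for the base case). The only mild point of care is checking that $\sigdeg$ remains defined under enlarging the parameter set, which is immediate from the definition since enlarging $A$ can only decrease both the minimum Morley rank being computed and the threshold $N$.
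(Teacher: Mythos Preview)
Your proof is correct and is precisely the standard unpacking of the paper's one-line ``It follows immediately that'': the paper relies on the preceding lemma (that $\sigma$-degree witnesses forking) and leaves the induction on $\sigdeg$ implicit, which is exactly what you have written out. Your handling of the base case via Fact~\ref{basicfacts}(\ref{alclosure}) and your observation that $\sigdeg$ stays defined under enlarging parameters are both fine and match what the paper takes for granted.
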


 In ACFA, there is a partial converse to this: finite $U$-rank implies finite $\sigma$-degree (2.5 in \cite{ChaHru1}). However, that proof relies on ACF having finite rank and on the whole model of ACF being a group. We expect that only the second of these is necessary.

\begin{llama}
\label{sloganlemma}
 Slogan: The $\sigma$-degree of any $\lsig$-partial-type-definable very dense subset of $(\AAA,\BB)^\sharp$ is $\RM(\AAA)$.

 Statement: Suppose that $\AAA$ and $\BB$ are $L$-definable over $D$, and $S \subset (\AAA,\BB)^\sharp$ is $\lsig$-partial-type-definable over $D$, and very dense in $(\AAA,\BB)^\sharp$ over $\aclsig{D}$.
 Then $\sigdeg(S/D) = \RM(\AAA)$; if $D$ is an $\omega$-saturated model of $T_A$, then also $\sigdeg(S/C) = \RM(\AAA)$ for any $C \supset D$.
\end{llama}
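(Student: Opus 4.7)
The plan is to establish matching upper and lower bounds on $\sigdeg(a/D)$ for a generic realization $a$ of $S$ in a sufficiently saturated model.

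For the upper bound, every $a \in S$ satisfies $(a, \sigma(a)) \in \BB$, and since $\BB$ is $L$-definable over $D$ with the projection $\BB \to \AAA$ a finite dominant $L$-rational function, $\sigma(a) \in \acl(Da)$. Thus $N=1$ suffices in the definition of $\sigma$-degree, and
$$\sigdeg(a/D) \;=\; \RM\bigl((a,\sigma(a))/\aclsig(D)\bigr) \;\leq\; \RM(\BB) \;=\; \RM(\AAA),$$
which gives $\sigdeg(S/D) \leq \RM(\AAA)$.

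For the matching lower bound, I will produce, in a sufficiently saturated model, a realization $a^* \in S$ with $a^*$ being $L$-generic in $\AAA$ over $\aclsig(D)$; equivalently, with $(a^*,\sigma(a^*))$ $L$-generic in $\BB$ over $\aclsig(D)$. Let $\pi(x)$ denote the partial $\lsig$-type defining $S$ and let $\gamma(x)$ be the partial $L$-type over $\aclsig(D)$ expressing that $x$ avoids every $\aclsig(D)$-definable subset of $\AAA$ of Morley rank less than $\RM(\AAA)$. To verify that $\pi \cup \gamma$ is consistent, note that any finite subtype of $\gamma$ amounts to avoiding a finite union $\CC_1 \cup \cdots \cup \CC_n$ of lower-Morley-rank subsets of $\AAA$; this union again has Morley rank less than $\RM(\AAA)$ because $\dM(\AAA)=1$, and since $S$ is \zdense in $\AAA$ over $\aclsig(D)$ some realization of $S$ must lie outside it. Compactness then yields the desired $a^*$, with $\sigdeg(a^*/D) = \RM(\AAA)$.

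For the second part, assume $D$ is an $\omega$-saturated model of $T_A$ and $C \supset D$. Since enlarging parameters only decreases $\sigma$-degree, we already have $\sigdeg(S/C) \leq \sigdeg(S/D) = \RM(\AAA)$. For the reverse inequality, pick any elementary extension $N \succ D$ containing $\aclsig(C)$: Lemma \ref{defzdense} gives that $S(N)$ is \zdense in $\AAA$ over $N$, hence $S$ is \zdense in $\AAA$ over $\aclsig(C) \subseteq N$. The compactness argument above now runs verbatim with $\aclsig(C)$ in place of $\aclsig(D)$ and produces $a^* \in S$ with $\sigdeg(a^*/C) = \RM(\AAA)$.

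The main obstacle is the compactness step, which converts the set-theoretic hypothesis that $S$ as a whole is \zdense into the existence of a single realization whose prolongation is $L$-generic in $\BB$ over the chosen parameter set. The key enabling fact is that finite unions of lower-Morley-rank subsets of the irreducible $\AAA$ are themselves of lower Morley rank, so every finite subtype of the $L$-genericity partial type is compatible with the \zdense set $S$.
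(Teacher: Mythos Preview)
Your proof is correct and follows the same strategy as the paper's. The paper is terser on the lower bound: it simply observes that Zariski-density of $S$ over $\aclsig(D)$ (and, via Lemma~\ref{defzdense}, over any $C \supset D$ when $D$ is an $\omega$-saturated model) means $S$ is not contained in any lower-rank definable set, leaving implicit the compactness extraction of a single $L$-generic realization $a^*$ that you spell out. For the upper bound the paper invokes Lemma~\ref{sharpacl} to get $N=1$ and then bounds by $\RM(\AAA)$ directly, whereas you reproduce the content of that lemma inline and bound via $\RM(\BB)$; these are the same argument.
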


\begin{proof}
 Since $\BB$ is a finite-to-finite correspondence between $\AAA$ and $\AAA^\sigma$, Lemma \ref{sharpacl} applies, and any $a \in (\AAA,\BB_0)^\sharp$ has finite $\sigma$-degree over any set containing $D$, witnessed by $N=1$.
 Also, $\sigdeg(a/C) \leq \sigdeg(a/ D) \leq RM(\AAA)$ as $\AAA$ is $D$-definable and contains $a$. So it suffices to show that $\sigdeg(S/C) \geq RM(\AAA)$. But $S$ is \zdense in $\AAA$ over $D$ (and if $D$ is an $\omega$-saturated model, then also over any $C \supset D$ by Lemma \ref{defzdense}), so $S \not\subset \AAA'$ for any $C$-definable $\AAA'$ with $\RM(\AAA') \lneq \RM(\AAA)$.
\end{proof}

Lemma \ref{prolongvdense} makes some prolongation of an $\lsig$-definable set with defined $\sigma$-degree \zdense in some correspondence, and Lemma \ref{sigdegprolong} asserts that its $\sigma$-degree is unaffected by the prolongation, so the last lemma is almost a defining property of $\sigma$-degree. This is used (and explained in more detail) in Corollary \ref{a3} below.


 We will be interested in complete $\lsig$-types whose realizations are very dense in $(\AAA, \BB)^\sharp$.
If $\AAA$ is a curve (that is, Morley rank $1$), any non-algebraic type is very dense. More generally, the realizations of a type $p$ are very dense in $(\AAA, \BB)^\sharp$ whenever $\RM(p|_L) = \RM(\AAA)$. The relationship between the properties ``$p$ is very dense in $(\AAA, \BB)^\sharp$'' and $U(p) = U((\AAA, \BB)^\sharp)$ is most intriguing.

\subsection{The theorem.}

\begin{deff}
 We say that two sets $S$ and $T$ are \emph{uniformly interalgebraic} if there is a formula $\theta(x;y)$ and an integer $N$ such that
$$\forall s \in S \; \exists_{\leq N}^{\geq 1} t \in T \; \theta(s,t)
 \mbox{ and } \forall t \in T \; \exists_{\leq N}^{\geq 1} s \in S \; \theta(s,t)$$
\end{deff}

There are hidden assumptions in the following theorem: don't forget the bold text at the end of Definition \ref{findom}.

\begin{thm}
\label{geninteralg}
\label{moshethm}
 Suppose that $\AAA$, $\BB$, $\CC$, and $\DD$ are $L$-definable sets, 
 that $S$ and $T$ are defined by complete $\lsig$-types,
 and that $\theta$ is an $\lsig$-formula such that
 \begin{itemize}
\item $S$ is a very dense subset of $(\AAA,\BB)^\sharp$;
\item $T$ is a very dense subset of $(\CC,\DD)^\sharp$;
\item $\theta(x;y)$ witnesses that $S$ and $T$ are uniformly interalgebraic.
\end{itemize}

 Then then there is a (quantifier-free) formula $\zeta(x, x';y, y') \in L$
such that
 \begin{enumerate}
 \item $\theta(x,y) \wedge x \in S \wedge y \in T$ implies
  $\zeta(x, \sigma(x);y, \sigma(y))$;
 \item $\zeta(x, x', y, y')$ implies $(x, x') \in \BB$ and $(y,y') \in \DD$, and
  $$ \{ (a, a', c, c') \sthat \zeta(a,a',c,c') \} =: \FF \subset \BB \times \DD$$ is a finite-to-finite correspondence between $\BB$ and $\DD$.
 \item $\EE := \{ (a,c) \sthat \exists a', c'\, (a,a', c, c') \in \FF \}$ is a finite to finite correspondence between $\AAA$ and $\CC$, and
 $\{ (a',c') \sthat \exists a, c\, (a,a', c, c') \in \FF \} = \EE^\sigma$
 \end{enumerate}
 \end{thm}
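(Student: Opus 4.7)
My plan is to define $\zeta$ to be the quantifier-free $L$-formula defining the $L$-locus $\FF$ of the tuple $(a, \sigma(a), b, \sigma(b))$ over a $\sigma$-closed parameter set $E$ containing the parameters of $\AAA, \BB, \CC, \DD, \theta, S, T$, where $a \models S$, $b \models T$, and $\theta(a, b)$. Since $T$ is totally transcendental with quantifier elimination and elimination of imaginaries, such a locus is well-defined, irreducible, and defined by a quantifier-free $L$-formula; the three conclusions then follow from very density of $S, T$ in their respective sharps together with the algebraicity packaged by the sharp structure.

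The preliminary observation is: by very density and Lemma \ref{sharpacl}, realizations of $S$ sit inside some $(\AAA, \BB_0)^\sharp$ where $\aclsig = \acl$, so $\sigma^i(a) \in \acl(a)$ for all $i \in \mathbb{Z}$; similarly for $b$. Combined with Fact \ref{basicfacts}(2) and the uniform interalgebraicity provided by $\theta$, this yields $b \in \aclsig(a) = \acl(a) = \acl(a, \sigma(a))$ and symmetrically $a \in \acl(b, \sigma(b))$. For part (1), any other valid pair $(x, y)$ with $x \in S$, $y \in T$, $\theta(x, y)$ is $\lsig$-conjugate over $E$ to $(a, b)$ up to finite ambiguity; because the sharp structure packages all $\sigma$-shifts as $L$-algebraic functions of the base, this $\lsig$-conjugation translates into an $L$-conjugation of $(x, \sigma(x), y, \sigma(y))$ with $(a, \sigma(a), b, \sigma(b))$ over $E$, so $(x, \sigma(x), y, \sigma(y)) \in \FF$.

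For parts (2) and (3), dominance of the four projections $\FF \to \BB$, $\FF \to \DD$, $\EE \to \AAA$, $\EE \to \CC$ is immediate: $(a, \sigma(a))$ is $L$-generic in $\BB$ and $a$ is $L$-generic in $\AAA$ by very density, and likewise for the targets involving $\CC$ and $\DD$. Finiteness of the generic fibers follows from the algebraicity bounds above, since the $\FF$-fiber over $(a, \sigma(a))$ lies in $\{a\} \times \{\sigma(a)\} \times \acl(a, \sigma(a)) \times \acl(a, \sigma(a))$, which is finite. Finally, that the projection of $\FF$ onto the primed coordinates equals $\EE^\sigma$ follows from $\sigma$ being an $L$-automorphism of the ambient model together with $\sigma$-closure of $E$: applying $\sigma$ to the $L$-locus $\EE$ of $(a, b)$ over $E$ yields the $L$-locus of $(\sigma(a), \sigma(b))$, which is precisely the primed projection of $\FF$.

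The main obstacle, I expect, is the bookkeeping around the finite ambiguity in part (1): there may be several complete $\lsig$-types of pairs $(x, y)$ extending $x \in S, y \in T, \theta(x, y)$, and I need the single irreducible $\FF$ to absorb all of them. The key observation is that $L$-conjugation of $(a, b)$ over $\acl(E)$ automatically transports through the sharp structure to an $L$-conjugation of the extended tuple $(a, \sigma(a), b, \sigma(b))$, because within a sharp, $\sigma$ is an $L$-algebraic function of its argument. Getting the various containments ($\FF \subset \BB \times \DD$ and the equality with $\EE^\sigma$) to hold on the nose rather than merely generically will likely require further shrinking to very dense subsets and using irreducibility of $\FF$ to pass from a generic containment to the full one.
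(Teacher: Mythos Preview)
Your approach via the $L$-locus of a single realization $(a,\sigma(a),b,\sigma(b))$ gives clean proofs of (2) and (3), but there is a genuine gap in (1), and the resolution you propose does not work.

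The obstacle you correctly identify is real and is not dissolved by your ``key observation.'' There are two separate problems.  First, different valid pairs $(x,y)$ need not even be $L$-conjugate to your chosen $(a,b)$ over $E$: for a fixed $a$ there may be several $b' \in T$ with $\theta(a,b')$, each determining a different isolated $L$-type over $Ea$, hence a different $L$-type of the pair over $E$.  Second, even when $(a,b)$ and $(x,y)$ \emph{are} $L$-conjugate over $E$, your claim that this extends to an $L$-conjugation of $(a,\sigma(a),b,\sigma(b))$ and $(x,\sigma(x),y,\sigma(y))$ is false in general: $\sigma(a)$ is $L$-\emph{algebraic} over $a$ via $\BB$, not an $L$-\emph{definable function} of $a$, so an $L$-automorphism sending $a$ to $x$ may send $\sigma(a)$ to any point of the $\BB$-fibre over $x$, not necessarily to $\sigma(x)$.  (What does work is that $\lsig$-conjugation of $(a,b)$ extends to the prolongation --- but, as you note yourself, the pairs need not be $\lsig$-conjugate.)  Consequently your irreducible $\FF$ is just one of possibly several components, and for pairs $(x,y)$ landing in a different component, (1) fails.

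The paper avoids this by never choosing a realization.  It works directly with $L$-formulae $\phi(x,x',y,y')$ which (by Lemma~\ref{sharpacl} and Fact~\ref{basicfacts}(\ref{alclosure})) are implied by $\theta(x,y)\wedge x\in S_1\wedge y\in T_1$ and have uniformly bounded fibres over $S_1$ and $T_1$; it then picks one of least Morley rank and intersects with the loci where the fibre bounds hold.  This $\zeta$ covers \emph{all} pairs by construction, at the price that the resulting $\FF$ need not be irreducible.  Your approach can be repaired in the same spirit: take the union of the $L$-loci over the finitely many $\lsig$-types of pairs extending $(x\in S)\wedge(y\in T)\wedge\theta(x,y)$ (there are at most $N$, since $b\in\aclsig(Ea)$), and give up irreducibility of $\FF$.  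Irreducibility is then recovered where it is actually needed, in Corollary~\ref{moshediag}, by passing to the component containing a specific type $r$.
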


\begin{proof}
 We may assume without loss of generality that $(\AAA,\BB)^\sharp$ and $(\CC,\DD)^\sharp$ already satisfy the conclusion of Lemma \ref{genericB0}.

 By compactness, we can find $\lsig$-definable $S_1$ and $T_1$ such that
$S \subset S_1 \subset (\AAA,\BB)^\sharp$ and $T \subset T_1 \subset (\CC,\DD)^\sharp$, with $S_1$ and $T_1$ still uniformly interalgebraic via $\theta$; note that $S_1$ and $T_1$ are still very dense in $(\AAA,\BB)^\sharp$ and $(\CC,\DD)^\sharp$, respectively.

 By Lemma \ref{sharpacl} and Fact \ref{basicfacts}(2) there are $L$-formulae $\phi(x, x',y, y')$ such that \begin{itemize}
\item $x \in S_1 \wedge y \in T_1 \wedge \theta(x, y)$ implies
 $\phi(x, \sigma(x),y, \sigma(y))$, and
\item there is a bound $M \in \mathbb{N}$ such that\\
 $\forall x \in S_1 \exists_{\leq M} (y,y') \, \phi(x, \sigma(x),y, y')$ and
  $\forall y \in T_1 \exists_{\leq M} (x,x') \, \phi(x, x', y, \sigma{y})$
\end{itemize}
 We take $\zeta_0(x,x',y,y')$ to be one of these $\phi$ with the least possible Morley rank.

Now $\BB_1 := \{ (a,a') \in \BB \sthat \exists_{<N} (y,y') \, \zeta_0(a, a',y, y') \}$ is an $L$-definable subset of $B$ containing the first prolongation of $S_1$, which is \zdense in $\BB$, so $\RM(\BB_1) = \RM(\BB)$. Define $\DD_1$ the same way, and make the same observation.

Let $\zeta(x,x',y, y') := \zeta_0(x,x',y,y') \wedge (x,x') \in \BB_1 \wedge (y,y')\in \DD_1$, and let $\FF \subset \BB \times \DD$ be defined by it. Note that the image of the projection $\FF \rightarrow \BB$ contains $S_1$ and therefore has full Morley rank, and similarly for the image in $\DD$.

We have now shown that $\FF$ is a finite-to-finite correspondence between $\BB$ and $\DD$, so (2) is proved.

As for the first conclusion, we already have that
$$x \in S_1 \wedge y \in T_1 \wedge \theta(x, y) \mbox{ implies }
 \zeta_0 (x, \sigma(x),y, \sigma(y))$$
Since $S_1 \supset S$ and $T_1 \supset T$, it follows that
$$x \in S \wedge y \in T \wedge \theta(x, y) \mbox{ implies }
 \zeta_0 (x, \sigma(x),y, \sigma(y))$$
Since $\BB_1 \subset \BB$ has full Morley rank, and the first prolongation of $S$ is a complete type \zdense in $\BB$, Lemma \ref{ineverybig} shows that $x \in S$ implies $(x,\sigma(x)) \in \BB_1$. The identical argument for $T$ in $\DD$ finishes the proof of (1).

To see that $\FF$ projects dominantly onto $\AAA$ and $\CC$, note that a composition of finite dominant rational functions is itself finite dominant. So $\EE$ is indeed a finite to finite correspondence between $\AAA$ and $\CC$.
 Since $(S^+ \times T^+) \cap \FF$ is \zdense in $\FF$, its projections $(S \times T)$ and $(S^\sigma \times T^\sigma)$ are \zdense in the two (finite!) projections of $\FF$, finishing the proof of (3).
\end{proof}

 It is worth noting that the conclusion of this theorem cannot be sharpened to make $\theta$ and $\zeta$ equivalent on $S \times T$: for example, $\theta$ may be the graph of $\sigma$.

\begin{sheep}
\label{moshediag}
 If two $\lsig$-types $p$ and $q$, both of U-rank $1$, are non-orthogonal,
  and $p$ is a very dense subset of $(\AAA,\BB)^\sharp$, and
$q$ is a very dense subset of $(\CC,\DD)^\sharp$,
 then $\RM(\AAA) = \RM(\CC)$ and there  are $L$-definable $\EE$ and $\FF \subset \EE \times \EE^\sigma$, and a U-rank $1$ type $r \in (\EE,\FF)^\sharp$ and finite dominant rational $\pi: \EE \rightarrow \AAA$, and $\rho: \EE \rightarrow \CC$ such that $\pi(r) = p$ and $\rho(r) = q$ and the following diagram commutes

 \begin{equation*}
\begin{CD}
\CC @<<< \DD @>>> \CC^{\sigma}\\
@AA{\rho}A @AAA @AA{\rho^\sigma}A\\
\EE @<<< \FF @>>> \EE^{\sigma}\\
@VV{\pi}V @VVV @VV{\pi^\sigma}V\\
\AAA @<<<  \BB @>>> \AAA^\sigma\\
\end{CD}
\end{equation*}

\end{sheep}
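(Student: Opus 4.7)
The plan is to derive the diagram from Theorem \ref{moshethm} after first upgrading non-orthogonality into uniform interalgebraicity. Non-orthogonality of the two $U$-rank $1$ types $p$ and $q$ in the supersimple theory $T_A$ means that over some parameter set $E$ extending their parameters (which we may take to be an $\omega$-saturated elementary submodel containing the parameters of $\AAA$, $\BB$, $\CC$, $\DD$) there are realizations $a \models p$ and $c \models q$ that fork over $E$. Since $U(a/E) = U(c/E) = 1$, forking forces $a \in \aclsig(Ec)$ and $c \in \aclsig(Ea)$; by compactness applied to both directions, some single $\lsig$-formula $\theta(x;y)$ over $E$ uniformly witnesses the interalgebraicity of all realizations of $p$ with all realizations of $q$. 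Thus $S := p$ and $T := q$ together with $\theta$ meet the hypotheses of Theorem \ref{moshethm}.

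Invoking Theorem \ref{moshethm} yields a quantifier-free $L$-formula $\zeta$ whose locus $\FF \subset \BB \times \DD$ is a finite-to-finite correspondence between $\BB$ and $\DD$, and the projection $\EE \subset \AAA \times \CC$ of $\FF$ onto the $\AAA$- and $\CC$-coordinates is a finite-to-finite correspondence between $\AAA$ and $\CC$. In particular $\RM(\AAA) = \RM(\CC)$, as both equal $\RM(\EE)$. Let $\pi : \EE \to \AAA$ and $\rho : \EE \to \CC$ be the coordinate projections; both are finite dominant rational by construction. Item (3) of Theorem \ref{moshethm} identifies the remaining projection of $\FF$ with $\EE^\sigma$, so after reordering coordinates $\FF$ sits inside $\EE \times \EE^\sigma$ with finite dominant coordinate projections, so $(\EE, \FF)^\sharp$ is a well-formed sharp. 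Commutativity of the diagram is then immediate: the bottom square commutes because $\EE$ is defined as the image of $\FF$ under the projection dropping the $\CC$- and $\CC^\sigma$-coordinates, and the top square commutes by the symmetric projection.

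For the type $r$, take the complete $\lsig$-type of $(a,c)$ over $E$. By Theorem \ref{moshethm}(1), $\zeta(a, \sigma(a), c, \sigma(c))$ holds, so after the coordinate reordering $((a,c), \sigma(a,c)) \in \FF$; hence $(a,c) \in (\EE, \FF)^\sharp$ and $r$ is a type in $(\EE, \FF)^\sharp$. The $\lsig$-interalgebraicity of $a$, $c$, and therefore of $(a,c)$ with $a$ over $E$ gives $U(r) = U(p) = 1$, while the projections $\pi_\ast r$ and $\rho_\ast r$ are by construction the types of $a$ and $c$ over $E$, namely $p$ and $q$.

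The main obstacle is the opening paragraph: extracting a single $\theta$ that uniformly witnesses two-way $\lsig$-algebraicity. For a single pair $(a,c)$ one only gets two separate bounded-fiber formulas, one over $Ec$ and one over $Ea$; turning these into one symmetric $\theta$ over $E$ with uniform bounds in both directions requires the standard compactness argument in the saturated model, together with the observation (via Fact \ref{basicfacts}\eqref{indepinTA} and the definition of $\aclsig$) that $\lsig$-algebraicity of $a$ over $Ec$ really is witnessed by an $\lsig$-formula over $E$ once one has incorporated the $\sigma$-iterates. Once this is in hand, Theorem \ref{moshethm} and the dense-correspondence bookkeeping do the rest.
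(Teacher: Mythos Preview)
Your proof is correct and follows the same route as the paper's: pass to a sufficiently saturated parameter set so that non-orthogonality of the two $U$-rank $1$ types becomes interalgebraicity, then apply Theorem \ref{moshethm} and read off $\EE$, $\FF$, $\pi$, $\rho$, and $r$ from its output. The paper's own proof is in fact extremely terse (it only records the parameter-suppression step and the orthogonality/almost-orthogonality identification), so your write-up supplies exactly the details the paper leaves implicit, including the construction of $r$ as $\operatorname{tp}_{\lsig}((a,c)/E)$ and the verification that $\pi_\ast r = p$, $\rho_\ast r = q$.
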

\begin{proof}
We suppress parameters - either we must begin with a sufficiently saturated model as our parameter set, or we must allow the possibility that the new sets need new parameters. This permits us to equate orthogonality and almost-orthogonality.


\end{proof}

Note that this does not make $(\AAA,\BB)^\sharp$ definably interalgebraic with $(\CC,\DD)^\sharp$: it may easily be that $\pi( (\EE,\FF)^\sharp)$ is a proper subset of $(\AAA,\BB)^\sharp$, witnessing the lack of full quantifier elimination in $T_A$.

\begin{sheep} \label{a3}
Uniformly interalgebraic $\lsig$-type-definable sets with defined $\sigma$-degree have the same $\sigma$-degree.
Non-orthogonal $U$-rank $1$ $\lsig$-types with defined $\sigma$-degree have the same $\sigma$-degree.
\end{sheep}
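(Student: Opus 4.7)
The plan is to reduce both statements to the ``slogan lemma'' (Lemma \ref{sloganlemma}) by packaging the hypotheses into the $\sharp$-formalism. The key observation is that Lemma \ref{sloganlemma} pins down the $\sigma$-degree of any very dense $\lsig$-type-definable subset of $(\AAA,\BB)^\sharp$ as $\RM(\AAA)$, while Theorem \ref{moshethm} and Corollary \ref{moshediag} force the outer $L$-definable sets $\AAA$ and $\CC$ to have equal Morley rank. The bridge between the raw hypotheses (``defined $\sigma$-degree'' plus uniform interalgebraicity or non-orthogonality) and this formalism is Lemma \ref{prolongvdense}, which realizes any $\lsig$-type-definable set of defined $\sigma$-degree as a very dense subset of some $(\AAA,\BB)^\sharp$ after passing to a prolongation.

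Concretely, for the first statement suppose $S$ and $T$ are $\lsig$-type-definable over a sufficiently saturated base, have defined $\sigma$-degrees, and are uniformly interalgebraic via a formula $\theta(x,y)$. Let $S'$ and $T'$ be the $N$th prolongations of $S$ and $T$ large enough that Lemma \ref{prolongvdense} applies to both: $S'$ is very dense in some $(\AAA,\BB)^\sharp$ and $T'$ is very dense in some $(\CC,\DD)^\sharp$. Because prolongation is a definable bijection, $\theta$ can be rewritten as a formula $\theta'$ on the prolonged tuples witnessing uniform interalgebraicity of $S'$ and $T'$. Working type-by-type inside $S'$ and $T'$ if necessary, Theorem \ref{moshethm} produces a finite-to-finite correspondence $\EE \subset \AAA \times \CC$, which forces $\RM(\AAA) = \RM(\CC)$. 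Lemma \ref{sloganlemma} then gives $\sigdeg(S') = \RM(\AAA) = \RM(\CC) = \sigdeg(T')$, and Lemma \ref{sigdegprolong} transfers this equality back to $\sigdeg(S) = \sigdeg(T)$.

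The second statement follows the same template but uses Corollary \ref{moshediag} in place of Theorem \ref{moshethm}. Given non-orthogonal $U$-rank $1$ types $p$ and $q$ with defined $\sigma$-degree, pass to prolongations $p'$ and $q'$ using Lemma \ref{prolongvdense}; prolongation is a definable bijection on realizations, so $p'$ and $q'$ are still $U$-rank $1$ and still non-orthogonal. Corollary \ref{moshediag} directly yields $\RM(\AAA) = \RM(\CC)$ for the ambient irreducible sets, and the same two-step appeal to Lemma \ref{sloganlemma} and Lemma \ref{sigdegprolong} finishes the proof.

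The only thing requiring minor care is the transfer of uniform interalgebraicity (respectively non-orthogonality) from $S,T$ to their prolongations $S',T'$; but since a prolongation is a definable bijection between the set and its image, both properties are trivially preserved. Once this bookkeeping is done the corollary is essentially a restatement of Lemma \ref{sloganlemma} combined with the Morley-rank-equality consequence of the theorem and its corollary, so no further work is needed.
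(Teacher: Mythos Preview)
Your proposal is correct and follows essentially the same route as the paper: prolong into some $(\AAA,\BB)^\sharp$ and $(\CC,\DD)^\sharp$ via Lemma~\ref{prolongvdense}, invoke Theorem~\ref{moshethm} to get a finite-to-finite correspondence forcing $\RM(\AAA)=\RM(\CC)$, then read off equal $\sigma$-degrees from Lemma~\ref{sloganlemma} and transfer back via Lemma~\ref{sigdegprolong}. The only cosmetic difference is in the second statement: the paper simply observes that non-orthogonal $U$-rank~$1$ types are uniformly interalgebraic and thereby reduces to the first statement, whereas you invoke Corollary~\ref{moshediag} directly; since that corollary is itself just Theorem~\ref{moshethm} applied after the same interalgebraicity observation, the two arguments are the same under the hood.
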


\begin{proof}
For the first part, let $S_0$ and $T_0$ be the uniformly interalgebraic $\lsig$-definable sets with defined $\sigma$-degree. Use Lemma \ref{prolongvdense} to find prolongations $S$ of $S_0$ very dense in $(\AAA, \BB)^\sharp$, and $T$ of $T_0$ very dense in $(\CC, \DD)^\sharp$. By Lemma \ref{sloganlemma}, $\sigdeg(S) = \RM(\AAA)$ and $\sigdeg(T) = \RM(\CC)$.
Clearly, $S$ and $T$ are still uniformly interalgebraic (witnessed by the same formula).
By Theorem \ref{moshethm}, there is a finite-to-finite correspondence between $A$ and $C$, so $\RM(\AAA) = \RM(\CC)$. By Lemma \ref{sigdegprolong}, $\sigdeg(S_0) = \sigdeg(S)$ and $\sigdeg(T_0) = \sigdeg(T)$.

For the second part, note as in the last corollary that non-orthogonal types of $U$-rank $1$ are uniformly interalgebraic.
\end{proof}

 One special case of uniformly interalgebraicity is an $\lsig$-definable bijection.
 Clearly, a type with defined $\sigma$-degree cannot be interalgebraic with a type with undefined $\sigma$-degree.

\section{Groups in $T_A$.}
\label{groupsection}
 We begin this section with Corollary \ref{gpdiagthm}, a statement about ACFA from the author's thesis (Theorem 3 in \cite{mythesis}) that motivated this work. We then explain the correct statement in the general setting of $T_A$ and prove it.

\subsection{Motivation from ACFA}

Remember, $U$ is the Lascar rank of $\lsig$-definable or $\lsig$-type-definable sets, and we write $U(\pi)$ for a partial type $\pi$ over $A$ to mean $\sup \{ U(p) \sthat \pi \subset p \in S(A) \}$.

\begin{deff}
 We call a $U$-rank 1 type or definable set \emph{minimal}, even though ``weakly minimal'' is more correct.

 Following the terminology in \cite{ChaHru1}, we say that a minimal type $p$ is \emph{modular} (over some $E$) if whenever $A$ and $B$ are sets of realizations of $p$, $A$ and $B$ are independent over $\aclsig(EA) \cap \aclsig(EB)$.
 
 A type $p$ is \emph{locally modular} if for some $E$, some non-forking extension of $p$ to $E$ is modular over $E$.

 As usual, a minimal set $S$ is called \emph{trivial} if $\aclsig(A) = \cup_{a \in A} \aclsig(a)$ for any $A \subset S$.

 We call a minimal, locally modular, non-trivial type \emph{grouplike}. If all types in a minimal definable set are grouplike, we call the set itself grouplike.
\end{deff}

\begin{fact} (Zilber Trichotomy for ACFA \cite{ChaHruPet})\\
 In ACFA, every minimal type is exactly one of the following: \begin{itemize}
 \item non-orthogonal to a generic type of the fixed field of a definable automorphism (and therefore not locally modular);
 \item grouplike and non-orthogonal to a generic type of a definable minimal modular group;
 \item trivial.
\end{itemize}
\end{fact}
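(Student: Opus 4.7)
The statement is cited from Chatzidakis--Hrushovski--Peterzil, so my plan mirrors the strategy of that paper: it is a two-step trichotomy, first the general Zariski-geometries dichotomy (locally modular vs.\ field-interpreting), then the internal structure theorems for ACFA that pin down each case.

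First, I would associate algebro-geometric data to each minimal type $p$. Using the $\sharp$-machinery of Section \ref{moshethmsection}, any non-algebraic $p$ of $U$-rank $1$ is (up to interalgebraicity with a prolongation) very dense in some $(\AAA,\BB)^\sharp$ with $\RM(\AAA)=1$, so that $\AAA$ is an algebraic curve and $\BB\subset\AAA\times\AAA^\sigma$ is a finite-to-finite correspondence. This is the ACFA analogue of a $D$-variety, and the combinatorial geometry on realizations is (essentially) a Zariski geometry: irreducible closed sets are given by quantifier-free $L$-formulae, dimension is controlled by the Morley rank coming from ACF, and the finite-to-finite correspondence $\BB$ supplies enough closed curves in $(\AAA,\BB)^\sharp \times (\AAA,\BB)^\sharp$ to satisfy Hrushovski's axioms for a Zariski geometry (or at least his weaker axioms sufficient for the dichotomy, once we ``saturate'' by allowing all finite covers).

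Second, I would invoke Hrushovski's dichotomy theorem for (very ample) Zariski geometries: every minimal Zariski geometry is either locally modular or interprets a pure algebraically closed field $K$. In the \emph{not locally modular} case, the interpretable $K$ is definable in ACFA, so I would apply the Chatzidakis--Hrushovski classification of definable fields in ACFA: every infinite field definable in a model of ACFA is definably isomorphic to the fixed field $\mathrm{Fix}(\tau)$ of some definable automorphism $\tau$ (necessarily of the form $\sigma^m\circ\Phi^n$). Non-orthogonality of $p$ to $K$ therefore gives non-orthogonality to the generic type of such a fixed field, which is the first case of the trichotomy. It is an easy verification that a generic type of $\mathrm{Fix}(\tau)$ is not locally modular (the fixed field carries the full ACF-induced structure), so this case is disjoint from the other two.

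Third, in the \emph{locally modular, non-trivial} case, I would produce a definable group by the group configuration theorem (Hrushovski's version, valid in simple theories). Local modularity plus non-triviality yields an indiscernible tuple witnessing the group configuration on realizations of $p$ (working over a base where $p$ is modular), hence an $\lsig$-type-definable minimal group $H$ with generic type non-orthogonal to $p$. By Fact \ref{amadorfact}, a finite-index subgroup of $H$ is isogenous to an $L$-definable group, and one argues (using that $H$ is minimal modular) that $H$ itself is $\lsig$-definable, minimal, and modular. The remaining trivial case is what is left over, so the trichotomy is exhaustive and mutually exclusive. The main obstacle is the first step: verifying carefully that $(\AAA,\BB)^\sharp$ really satisfies enough of the Zariski-geometry axioms (especially the dimension/intersection theorem and the presence of enough irreducible closed subsets of products) to trigger the Hrushovski dichotomy. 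In ACFA this works because one can import the closed-set structure from ACF through the $\sharp$-construction, but it is not at all automatic in an arbitrary $T_A$, which is why the trichotomy is stated only for ACFA.
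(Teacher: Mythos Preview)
The paper does not prove this statement at all: it is recorded as a \textbf{Fact} with a bare citation to \cite{ChaHruPet}, and no argument is given or sketched. So there is nothing in the paper to compare your proposal against; you have written an outline of a proof where the paper simply quotes a result from the literature.

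That said, your outline is broadly faithful to the actual strategy in \cite{ChaHruPet}: the Zariski-geometry dichotomy supplies the locally-modular/field-interpreting split, the classification of definable fields in ACFA identifies the interpreted field with a fixed field, and the group configuration handles the locally-modular non-trivial case. One imprecision worth flagging: you assert that a minimal type is very dense in some $(\AAA,\BB)^\sharp$ with $\RM(\AAA)=1$. In ACFA, $U$-rank $1$ only guarantees \emph{finite} $\sigma$-degree (see the remark after the corollary that $\sigma$-degree bounds $U$-rank), not $\sigma$-degree $1$; so $\AAA$ is a variety of possibly higher dimension, and the Zariski-geometry verification in \cite{ChaHruPet} is correspondingly more delicate than a curve argument. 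You correctly identify this verification as the main obstacle, but the sketch understates how much work it is.
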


 The following statement about grouplike minimal types in ACFA (Theorem 3 in \cite{mythesis}) was the original motivation for this work. 

\begin{sheep} (ACFA)
\label{gpdiagthm}
 If some very dense $\lsig$-type $p$ in $(\AAA, \BB)^\sharp$ is grouplike
, then there is a group correspondence $(\EE,\FF)^\sharp$, and further irreducible sets $\CC$ and $\DD$ and $L$-definable finite dominant functions such that the following diagram commutes:
 \begin{equation*}\begin{CD}
\EE @<<< \FF @>>> \EE^{\sigma}\\
@AA{\rho}A @AAA @AA{\rho^\sigma}A\\
\CC @<<< \DD @>>> \CC^{\sigma}\\
@VV{\pi}V @VVV @VV{\pi^\sigma}V\\
\AAA @<<< \BB @>>> \AAA^{\sigma}\\
\end{CD}\end{equation*}

 In this diagram, the horizontal arrows are projections, since $\BB \subset \AAA \times \AAA^\sigma$, $\DD \subset \CC \times \CC^\sigma$, and $\FF \subset \EE \times \EE^\sigma$, and the two middle vertical arrows are restrictions of $\pi \times \pi^\sigma$ and $\rho \times \rho^\sigma$ to $\DD$.

\end{sheep}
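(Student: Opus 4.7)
The plan is to combine the Zilber trichotomy in ACFA, Fact \ref{amadorfact} on $\lsig$-definable groups, and Corollary \ref{moshediag} on non-orthogonal types.

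First, since $p$ is grouplike and we are in ACFA, the Zilber trichotomy of \cite{ChaHruPet} produces an $\lsig$-definable minimal modular group $H$ whose generic type $q$ is non-orthogonal to $p$. Applying Fact \ref{amadorfact} to $H$, and absorbing the passage to a finite-index subgroup and the quotient by a finite kernel (both operations preserve $\lsig$-interalgebraicity of the generic type, and hence non-orthogonality to $p$), I reduce to the case where $H$ is an $\lsig$-definable subgroup of an $L$-definable group $A$ with $q$ its generic type inside $A$.

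Second, I realize $q$ (or a suitable prolongation) as a very dense subset of a \emph{group} correspondence which will be the sought $(\EE,\FF)^\sharp$. Because we are in ACFA, $U$-rank $1$ implies finite $\sigma$-degree by 2.5 of \cite{ChaHru1}, so by Lemma \ref{prolongvdense} some prolongation $q^+$ of $q$ is very dense in some $(\GG,\HH)^\sharp$ with $\GG$ an $L$-definable irreducible subset of $\tilde{A} := A \times A^\sigma \times \cdots \times A^{\sigma^{N}}$, where $N+1$ is the number of coordinates of the prolongation. To promote this to a group correspondence, I view the $L$-reduct $q^+|_L$ as an $L$-type in the $L$-definable group $\tilde{A}$: its $L$-stabilizer is an $L$-definable subgroup of $\tilde{A}$, contains the $\lsig$-definable prolongation of $H$ (which fixes $q^+$, hence $q^+|_L$, by translation), and has Morley rank equal to $\sigdeg(q) = \RM(q^+|_L)$. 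So by standard stabilizer calculus in $\omega$-stable groups, $q^+|_L$ is a generic of a coset of the connected component $\EE$ of this stabilizer. Running the identical argument in $\EE \times \EE^\sigma$ on the pair $(q^+|_L, \sigma(q^+)|_L)$ produces an $L$-definable subgroup $\FF \subseteq \EE \times \EE^\sigma$ making $(\EE,\FF)^\sharp$ a group correspondence inside which (a translate of) $q^+$ is very dense.

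Finally, I apply Corollary \ref{moshediag} to the non-orthogonal $U$-rank $1$ types $p \subset (\AAA,\BB)^\sharp$ and $q^+ \subset (\EE,\FF)^\sharp$, which yields an intermediate $(\CC,\DD)^\sharp$ together with finite dominant $L$-definable maps $\pi:\CC \to \AAA$ and $\rho:\CC \to \EE$ and the commutative diagram of the required shape. The main obstacle is the middle step, the promotion of $(\GG,\HH)^\sharp$ to a group correspondence: it hinges on verifying that the $L$-stabilizer of $q^+|_L$ in $\tilde{A}$ actually attains Morley rank $\sigdeg(q)$, so that $q^+|_L$ is a generic of it rather than of some proper lower-rank subgroup. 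Once this is in hand, the rest is a formal assembly of results already in the paper.
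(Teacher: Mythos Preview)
Your approach is essentially the same as the paper's, but you are re-deriving inline the machinery that the paper has already packaged. The paper's proof of this corollary is one line: ``grouplike'' means minimal and non-orthogonal to a generic of an $\lsig$-definable modular group, which for $U$-rank $1$ types gives interalgebraicity with such a generic, and then Corollary~\ref{gpdiagta} applies verbatim. Your steps 2--4 are precisely the content of Theorem~\ref{classifygp} and Proposition~\ref{groupkey}, and your final step is Corollary~\ref{moshediag}, exactly as in the proof of Corollary~\ref{gpdiagta}. You also do not need the detour through 2.5 of \cite{ChaHru1}: since $p$ already lives in $(\AAA,\BB)^\sharp$ it has defined $\sigma$-degree, and Corollary~\ref{a3} transfers this to any interalgebraic type.

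One correction about the ``main obstacle'' you flag. The Morley rank of the $L$-stabilizer of $q^+|_L$ is not the issue: it is bounded above by $\RM(q^+|_L)$ by standard stabilizer calculus, and bounded below because it contains the prolongation of $H$, which is \zdense in any $L$-definable set of Morley rank $\sigdeg(q)$ containing $q^+$. The genuine work, which your sketch glosses over, is twofold. First, one must pass to finite-index subgroups to arrange irreducibility at each stage; this is Lemma~\ref{groupwrap}. Second, and more substantively, having found the subgroup $\FF \leq \EE \times \EE^\sigma$, one must check that the projections $\FF \to \EE$ and $\FF \to \EE^\sigma$ are \emph{finite}, so that $(\EE,\FF)^\sharp$ is actually a correspondence in the sense required. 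This is where the prolongation is really used: it guarantees $\acl(Ea) = \acl(E\sigma(a))$ for generic $a$, and the paper extracts from this an $L$-formula $\phi$ witnessing finiteness of the fibers (see the last paragraph of the proof of Proposition~\ref{groupkey}). Your phrase ``running the identical argument in $\EE \times \EE^\sigma$'' does not by itself supply this.
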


 Corollary \ref{moshediag} provides the diagram once we find a minimal type $q$ nonorthogonal to $p$ and very dense in $(\EE,\FF)^\sharp$.

 Since we do not get full interalgebraicity in Theorem \ref{moshethm}, we do not get all types in $(\AAA,\BB)^\sharp$ to be grouplike; but we do get a large, quantifierfully definable subset $\pi( (\CC,\DD)^\sharp)$ of $(\AAA,\BB)^\sharp$ to be interalgebraic with a chunk of a group. We hope to prove one day that all very dense types in $(\AAA,\BB)^\sharp$ must be grouplike if one is, at least if the difference ideal generated by $(\AAA,\BB)^\sharp$ is prime. For the case when $\BB$ is the graph of a function, this is accomplished in the author's thesis \cite{mythesis} and described in this paper.

 It turns out that minimality, modularity, and the whole Zilber Trichotomy are irrelevant: Corollary \ref{gpdiagthm} is an easy consequence of Corollary \ref{gpdiagta} below. The correct hypothesis in the general setting is not that some very dense type in $(\AAA, \BB)^\sharp$ is grouplike, i.e. minimal and non-orthogonal to a generic type of an $\lsig$-definable minimal modular group, but only that this type is interalgebraic with a generic type of an $\lsig$-definable group. Minimality is replaced by defined $\sigma$-degree, automatic for a type in $(\AAA, \BB)^\sharp$. While the relationship between $U$-rank and $\sigma$-degree is still unclear in the general setting, the natural test case of DCFA confirms that defined $\sigma$-degree is the correct hypothesis: when the original theory $T$ does not have finite rank, assuming finite $U$-rank is far too restrictive.

We now set out to prove Corollary \ref{gpdiagta}. Along the way, we obtain a characterization of $\lsig$-definable groups with defined $\sigma$-degree (Theorem \ref{classifygp}), an analog of the characterization of groups of finite $U$-rank definable in ACFA. We do not return to ACFA until the very end of this section, where we prove Corollary \ref{gpdiagthm}.

\subsection{The key technical proposition.}

This section is devoted to the proof of Proposition \ref{groupkey} via some elementary facts about $\omega$-stable and simple groups.


 The next three lemmas are all about an abstract subgroup $H$ of an irreducible $L$-definable group $\AAA$. Later, we will assume that $H$ is $\lsig$-definable, but for the next three lemmas we work in one language $L$, inside a totally-transcendental group.

\begin{llama} (Entirely in $L$)\\
 Suppose that \begin{itemize}
\item $\AAA$ is an $L$-definable group;
\item  $\BB$ is an irreducible $L$-definable subset of $\AAA$;
\item  $r$ is the global generic $L$-type of $\BB$;
\item $\CC$ is the stabilizer of $r$ in $\AAA$; and
\item $H$ is an abstract subgroup of $\AAA$ which is \zdense in $\BB$;
\end{itemize}
 Then $\RM(\CC) = \RM(\BB)$, the group $\CC$ is connected, and $H \subset \CC$.
\end{llama}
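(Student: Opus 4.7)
The plan is to prove the three conclusions with the inclusion $H \subseteq \CC$ first, since it drives the equality of Morley ranks, and with connectedness handled separately at the end.

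\textbf{Inclusion $H \subseteq \CC$.} Fix $h \in H$. Since $H$ is a subgroup of $\AAA$ contained in $\BB$, we have $hH = H \subseteq \BB$, hence $H \subseteq \BB \cap h^{-1}\BB$. Zariski-density of $H$ in $\BB$, applied to the $L$-definable set $\BB \cap h^{-1}\BB$, forces $\RM(\BB \cap h^{-1}\BB) = \RM(\BB)$. Since $\BB$ has Morley degree $1$, the difference $\BB \setminus h^{-1}\BB$ has strictly smaller Morley rank, so the unique generic type $r$ of $\BB$ contains the formula $hx \in \BB$. The translate $h \cdot r$ is then a type concentrated on $\BB$ of rank $\RM(\BB)$, and uniqueness of the generic type of the irreducible set $\BB$ forces $h \cdot r = r$, i.e., $h \in \CC$.

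\textbf{Equality $\RM(\CC) = \RM(\BB)$.} The lower bound is immediate from the first step: $H \subseteq \BB \cap \CC$ together with Zariski-density of $H$ in $\BB$ yields $\RM(\BB \cap \CC) = \RM(\BB)$, so $\RM(\CC) \geq \RM(\BB)$. For the upper bound, pick $a \models r$ independent of the parameters defining $\CC$; the translate $\CC \cdot a$ is in $L$-definable bijection with $\CC$, so $\RM(\CC \cdot a) = \RM(\CC)$, and for $c$ generic in $\CC$ over $a$ the product $c \cdot a$ realizes $r$ (because $c$ stabilizes $r$ and is independent from $a$). Hence $\RM(\CC) \leq \RM(r) = \RM(\BB)$.

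\textbf{Connectedness of $\CC$.} This is the general fact that the stabilizer of a complete type in an $\omega$-stable group is connected. In the situation at hand one argues directly: if the connected component $\CC^0$ had finite index $n > 1$ in $\CC$ with coset representatives $c_1 = e, c_2, \ldots, c_n$, choose $a \models r$ generic over these representatives and the parameters of $\CC$. The $n$ definable sets $c_i \CC^0 \cdot a$ are pairwise disjoint and each of full Morley rank $\RM(\CC) = \RM(\BB)$; the generic type of each is a non-forking extension of $r$ to this extended base, because for $c_0$ generic in $\CC^0$ over the base the element $c_i c_0 \cdot a$ lies in $c_i \CC^0 \cdot a$ and, using $c_i c_0 \in \CC$ and $c_i c_0 \perp a$, realizes $r$. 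Uniqueness of the non-forking extension of the Morley-degree-$1$ type $r$ then identifies all $n$ of these generic types, contradicting the disjointness of the cosets.

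The main checkpoint is this last argument, where care is needed to ensure that $a$ is chosen generic over all the coset representatives and the parameters defining $\CC^0$, so that $c_i c_0 \cdot a$ really does give the unique non-forking extension of $r$. The other two parts are direct manipulations with Zariski-density and the defining property of Morley degree $1$.
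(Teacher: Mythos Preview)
Your proof is correct and follows essentially the same line as the paper's. The argument for $H \subseteq \CC$ is identical: translate by $h \in H$, use Zariski-density to get full-rank intersection, conclude $h \cdot r = r$.

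The difference is only in packaging. For $\RM(\CC) = \RM(\BB)$ and connectedness, the paper observes that $r$ lies in $\CC$ (since $H \subseteq \BB \cap \CC$ forces $r \in \CC$) and then invokes the standard fact (\cite{bigPillay} 1.6.6.ii and 1.6.16.iii) that a type in a stable group whose stabilizer is the whole group is the unique generic, so the group is connected and has the same rank as the type. You instead unfold these citations: your upper bound $\RM(\CC) \leq \RM(\BB)$ via $c \cdot a \models r$ for independent generic $c$, and your coset argument for connectedness, are precisely direct proofs of those cited facts specialized to this situation. So your version is more self-contained, at the cost of a little extra bookkeeping (e.g.\ making sure $a$ is generic over all coset representatives and that $c_i c_0$ is independent from $a$); the paper's is shorter but relies on the reader knowing the stable-group lemmas.
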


\begin{proof}
 It is a standard fact about totally transcendental groups (\cite{bigPillay} 1.6.16 and 1.6.21) that $\CC$ is a definable subgroup of $\AAA$. If $h \in H$, then $H = h H$ is \zdense in $h \BB$, so $h \BB$ intersects $\BB$ in a subset of full Morley rank, so $h r = r$. Therefore $H \subset \CC$, so $\BB \cap \CC$ has the same Morley rank as $\BB$, so $r$ is in $\CC$.
 Now $r$ is a type in a stable group $C$ with $\operatorname{Stab}_C (r) = C$, so $C$ is connected and $r$ is its unique generic (\cite{bigPillay} 1.6.6.ii and 1.6.16.iii).


 \end{proof}

 The next lemma eliminates one of the hypotheses from the previous lemma (the irreducibility of $\BB$), at the cost of passing from $H$ to a finite-index subgroup.

\begin{llama}
 Suppose that $\AAA$ is an $L$-definable group 
  and $H$ is an abstract subgroup of $\AAA$.
 Let $\BB$ be a definable (perhaps with new parameters) subset of $\AAA$ containing $H$, with the least possible Morley rank $\alpha$ and degree $r$.
 Then there is a finite index subgroup $H'$ of $H$ which is \zdense in an irreducible $L$-definable $\BB'$ with $\RM(\BB') = \alpha$.
\end{llama}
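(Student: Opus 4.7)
The plan is to decompose $\BB$ into its irreducible components, exploit the minimality of $\BB$ to show that $H$ permutes the corresponding generic types, and then use standard stabilizer theory in $\omega$-stable groups to realize a finite-index subgroup of $H$ as a \zdense subset of a connected definable subgroup of the correct rank.

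First, since $\dM(\BB) = r$, write $\BB = \BB_1 \cup \cdots \cup \BB_r$ as a union of $L$-definable irreducible components, each of Morley rank $\alpha$ and degree $1$, and let $p_i$ be the global generic $L$-type of $\BB_i$. For any $h \in H$, the intersection $\BB \cap h\BB$ still contains $H$ and is contained in $\BB$, hence has rank $\alpha$; minimality of $\dM(\BB)$ among rank-$\alpha$ definable supersets of $H$ then forces $\dM(\BB \cap h\BB) = r$. Each of its $r$ rank-$\alpha$ irreducible components is simultaneously a full-rank subset of some $\BB_i$ and of some $h\BB_j$, so corresponds to a pair $(\BB_i, h\BB_j)$ sharing a generic. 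This yields a permutation $\pi_h \in S_r$ with $h \cdot p_j = p_{\pi_h(j)}$, and the assignment $h \mapsto \pi_h$ is easily a group homomorphism. Let $H_0 := \ker(\pi)$, a finite-index subgroup of $H$ contained in the definable subgroup $\CC := \operatorname{Stab}_{\AAA}(p_1)$ of $\AAA$.

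Next, I would extract an irreducible target from $\CC$. Minimality of $\RM(\BB)$ forces some $H \cap \BB_i$ to be \zdense in $\BB_i$: otherwise each $H \cap \BB_i$ sits in a $\BB_i$-definable $\CC_i$ of rank $<\alpha$, and $H \subset \bigcup_i \CC_i$ has rank $<\alpha$, a contradiction. WLOG this holds for $i = 1$. Standard $\omega$-stable stabilizer theory gives that $p_1$ is the generic of some coset $\DD = a\CC^0$ of the connected component $\CC^0$ of $\CC$, with $\RM(\CC^0) = \alpha$ and $\dM(\CC^0) = 1$. Since $\BB_1$ and $\DD$ are irreducibles of rank $\alpha$ sharing the generic $p_1$, their symmetric difference has rank $<\alpha$, so $(H \cap \BB_1) \cap \DD$ remains \zdense in both; in particular it is nonempty, so we may pick $h_0 \in H \cap \DD$ and rewrite $\DD = h_0\CC^0$.

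Finally, set $H' := H \cap \CC^0$; this is a finite-index subgroup of $H_0 = H \cap \CC$, hence of $H$, since $\CC^0$ has finite index in $\CC$. Left-translating by $h_0 \in H$ gives $h_0 H' = H \cap \DD \supset (H \cap \BB_1) \cap \DD$, which is \zdense in $\DD$, so $H'$ itself is \zdense in $\CC^0$. Taking $\BB' := \CC^0$, irreducible and $L$-definable with $\RM(\BB') = \alpha$, completes the proof. The main obstacle is the combinatorial first step, where both the rank and the degree minimality of $\BB$ are needed to pin down the permutation action of $H$ on $\{p_1,\ldots,p_r\}$; the remainder is routine stable-group bookkeeping, with the key input being that $p_1$ is the generic of a coset of the connected component of its stabilizer.
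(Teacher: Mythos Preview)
Your argument is correct, modulo two small slips. First, you write ``$H_0 = H \cap \CC$'', but you only have $H_0 \subseteq H \cap \CC$: an element of $H$ can stabilize $p_1$ without stabilizing every $p_i$. This is harmless, since $H' = H \cap \CC^0$ has finite index in $H$ anyway (via $H_0 \subseteq H \cap \CC$ and $[\CC:\CC^0]<\infty$). Second, your appeal to ``standard $\omega$-stable stabilizer theory'' for $\RM(\CC^0)=\alpha$ hides that the lower bound comes not from general stabilizer facts but from $H_0 \subset \CC$ together with the minimality of $\alpha$; and the conclusion that $p_1$ is generic in a (base-definable) coset of $\CC^0$ uses stationarity over a model and deserves a sentence. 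There is also a harmless left/right coset confusion: since $\CC$ acts on the left, the relevant coset is $\CC^0 h_0$, so $H\cap\DD = H' h_0$ rather than $h_0 H'$.

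Your route genuinely differs from the paper's. After the shared first step (the permutation action on $\{p_1,\dots,p_r\}$ and its kernel), the paper does \emph{not} invoke stabilizers here: it takes a new $\BB'$ of least rank and degree containing the kernel, observes that its generics are among those of $\BB$ and hence fixed by the kernel, and then uses that the kernel meets every component $\CC_i$ of $\BB'$ while the identity lies in $\CC_0$ to force all the components' generics to coincide. You instead pass directly through $\operatorname{Stab}(p_1)$ and its connected component, producing not merely an irreducible $\BB'$ but a connected definable \emph{subgroup} $\CC^0$. In effect you have folded the next lemma (Lemma~\ref{groupwrap}) into this one: the paper's two-step plan is to first get irreducibility and only afterward apply the preceding stabilizer lemma to locate the group. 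Your path leans a bit more on cited machinery but reaches the group in a single move.
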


\begin{proof}
 For any $h \in H$, $(h \BB) \cap \BB \supset H$ and so has the same Morley rank and degree as $\BB$. That is, translating by $h$ permutes the $r$ generic types of $B$, giving a homomorphism from $H$ into a finite group $S_r$; let $H'$ be the kernel of that homomorphism, a finite-index subgroup of $H$. Let $\BB'$ be a least Morley rank and degree definable set containing $H'$. Now a finite union of translates of $\BB'$ covers $H$, so $\BB'$ has the same Morley rank as $\BB$. So all generic types of $\BB'$ are also generic in $\BB$, and therefore fixed by all elements of $H'$.
 Write $\BB' = \cup_i \CC_i$ for disjoint irreducible $\CC_i$ of full Morley rank, and let $p_i$ be the generic type of $\CC_i$, and let $\CC_0$ contain the identity of the group. Since $H'$ is \zdense in $\BB'$, there are $h_i \in \CC_i \cap H'$ for each $i$. Then on one hand, $h_i \cdot p_0 = p_0$ since $H'$ fixes all generic types of $\BB'$; but on the other hand, $h_i \cdot p_0 = p_i$ since it is inside $C_i$. So $\BB'$ is irreducible.
\end{proof}

The purpose of all that was

 \begin{llama}
\label{groupwrap}
 Suppose $\AAA$ is an $L$-definable group 
  and $H$ is an abstract subgroup of $\AAA$. Let $\alpha$ be the least Morley rank of
a definable set containing $H$. Then there exists a finite-index subgroup $H' \leq H$, and an irreducible $L$-definable \emph{subgroup} $\CC$ of $\AAA$ of Morley rank $\alpha$ containing $H'$.
\end{llama}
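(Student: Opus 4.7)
The plan is to compose the two preceding lemmas, which are set up precisely so that their conclusions and hypotheses match. First I apply the immediately preceding lemma to $H \leq \AAA$: this produces a finite-index subgroup $H' \leq H$ together with an irreducible $L$-definable set $\BB'$ (possibly over new parameters) of Morley rank $\alpha$ in which $H'$ is Zariski-dense.

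Next I invoke the lemma before that with $\BB'$ in the role of $\BB$ and $H'$ in the role of $H$, and let $r$ denote the global generic $L$-type of $\BB'$. That lemma then delivers a connected $L$-definable subgroup $\CC$ of $\AAA$, namely $\operatorname{Stab}_\AAA(r)$, satisfying $H' \subset \CC$ and $\RM(\CC) = \RM(\BB') = \alpha$. Since ``connected'' for $\omega$-stable groups means Morley degree $1$, this $\CC$ is irreducible in the sense of Definition \ref{findom}, so all the conclusions of the statement hold.

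The only detail that needs attention is that $\CC$ is genuinely $L$-definable and not merely definable in some expansion: the stabilizer of a stationary type in an $L$-definable group is definable over the canonical base of the type, and the global generic of an irreducible $L$-definable set is stationary with canonical base in the $L$-definable closure of the parameters of $\BB'$; by elimination of imaginaries these give ordinary $L$-parameters. I do not expect a substantive obstacle here — the two preceding lemmas were crafted to compose into exactly this statement, and this lemma is essentially a packaging of that composition.
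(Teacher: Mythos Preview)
Your proposal is correct and matches the paper's proof essentially verbatim: apply the second preceding lemma to pass from $H$ to a finite-index $H'$ Zariski-dense in an irreducible $\BB'$ of rank $\alpha$, then feed $H'$ and $\BB'$ into the first preceding lemma to obtain the connected stabilizer $\CC$ containing $H'$ with $\RM(\CC)=\alpha$. Your remark on $L$-definability is a reasonable extra comment, though note that in this subsection the paper is working entirely inside $L$ (see the sentence preceding these three lemmas), so ``definable with new parameters'' already means $L$-definable with parameters.
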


\begin{proof}
 The last lemma gives a finite-index subgroup $H'$ of $H$ which is \zdense in an irreducible definable subset $\BB'$ of $\AAA$ of Morley rank $\alpha$. The lemma before that then gives the irreducible subgroup $\CC$ of $\AAA$ with $H'$ \zdense in $\CC$.
\end{proof}

 Now we turn to the case where $H$ is $\lsig$-definable and, after three more intermediate results, prove Proposition \ref{groupkey}.

\begin{llama}
\label{genericzdense}
 Suppose that $M$ be a small model of $T_A$, $\EE$ is an $L(M)$-definable group, $H \leq \EE$ is an $\lsig(M)$-definable subgroup which is \zdense in $\EE$ over $M$, and $q$ is a global generic $\lsig$-type of $H$. Then $q$ is \zdense in $\EE$ over $M$.
 \end{llama}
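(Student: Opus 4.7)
The plan is to argue by contradiction. Suppose $q$ is contained in some $L(M)$-definable $\CC \subseteq \EE$ with $\RM(\CC) < \RM(\EE)$; the goal is then to produce an $L(M)$-definable subset of $\EE$ of Morley rank strictly less than $\RM(\EE)$ that contains all of $H$, thereby contradicting the \zdense-ness of $H$ in $\EE$ over $M$.

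First I would note that $\CC \cap H$ is $\lsig(M)$-definable and contains (all realizations of) $q$, since both $x \in \CC$ and $x \in H$ are formulae in $q$. Because $T_A$ is simple (Fact \ref{basicfacts}(1)) and $H$ is an $\lsig$-definable group in it, the standard theory of generic types in groups definable in simple theories applies. In particular, since $q$ is generic in $H$, finitely many left translates of $\CC \cap H$ cover $H$: in some $N \succ M$ with $N \models T_A$ we can find $h_1, \ldots, h_n \in H(N)$ with $H(N) \subseteq \bigcup_i h_i \cdot \CC(N)$.

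Next I would pull the translating parameters down into $M$ by using model-completeness of $T_A$. The $\lsig(M)$-formula
\[
\psi(y_1, \ldots, y_n) \;:=\; \bigwedge_i y_i \in H \;\wedge\; \forall x\,\Bigl(x \in H \to \bigvee_i x \in y_i \cdot \CC\Bigr)
\]
is realized in $N$ by $(h_1, \ldots, h_n)$, and since $T_A$ is a model companion we have $M \preceq N$, so $\psi$ is realized in $M$ itself by some tuple $(h'_1, \ldots, h'_n)$. Then $\bigcup_i h'_i \cdot \CC$ is $L(M)$-definable (the $h'_i$ lie in $M$ and $\CC$ is $L(M)$-definable), has Morley rank at most $\RM(\CC) < \RM(\EE)$, and contains $H$, contradicting the \zdense-ness of $H$ in $\EE$ over $M$.

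The main obstacle is invoking the right property of generic types: a formula in a generic type of a group definable in a simple theory has the property that finitely many left translates of its defining set cover the group. This is classical for stable groups (Poizat, Pillay) and extends to the simple setting in Wagner's treatment of groups in simple theories; once it is in hand, the rest is a routine application of model-completeness.
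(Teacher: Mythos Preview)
Your proof is correct and takes a genuinely different route from the paper's.

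The paper also argues by contradiction from $q \subset \CC$ with $\RM(\CC) < \RM(\EE)$, but instead of covering $H$ it produces a single translate $e_0\CC$ (with $e_0 \in H$) whose defining formula \emph{forks} over $M$. Since $e_0 q$ is again generic in $H$, hence nonforking over $M$, this is a contradiction. To manufacture the forking translate, the paper works in the $L$-theory: it takes $e_0 \in H$ realizing the generic $L$-type $p_\EE|_M$ of $\EE$ (possible by Zariski-density of $H$), extends to an $L$-Morley sequence $(e_i)$, observes that the $e_i\CC$ pairwise intersect in strictly lower Morley rank (because $\CC$ has small stabilizer in $\EE$), and concludes via a standard indiscernibility/definability-of-types argument that $x \in e_0\CC$ forks over $M$.

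Your argument, by contrast, stays entirely on the $\lsig$-side: you invoke the equivalence, for groups in simple theories, between ``nonforking-generic'' and ``covering-generic'' (Wagner) to get finitely many $H$-translates of $\CC$ covering $H$, and then use $M \preceq N$ to pull the translating parameters into $M$. This yields an $L(M)$-definable set of small Morley rank containing $H$, contradicting Zariski-density directly. Your route is shorter and more conceptual, at the cost of importing a nontrivial fact about generic types in simple groups; the paper's route is more self-contained, exploiting the specific two-language setup (forking in $T_A$ is controlled by the totally transcendental $L$-reduct) rather than general simplicity machinery. One small remark: you do not actually need to invoke model-completeness, since you already take $N \succ M$; plain elementarity suffices for the descent of $\exists \bar y\, \psi(\bar y)$.
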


 \begin{proof}
Suppose, towards contradiction, that there is some $L(M)$-definable $\CC$ with $q \subset \CC \subset \EE$ and $\RM(\CC) \lneq \RM(\EE)$. Since $q$ is a complete type, we may and do assume that $\CC$ is irreducible. For any $h \in H$, the type $hq$ contains the formula $(x \in h\CC)$ and is also generic in $H$, so $hq$ does not fork over $M$. 
We find one $h \in H$ such that $(x \in h\CC)$ forks over $M$, obtaining the desired contradiction.

The formula $(x \in \CC)$ is not in the global generic $L$-type $p_\EE$ of $\EE$, so some $\EE$-translate of it forks over $M$. To find an $H$-translate of $(x \in \CC)$ that forks over $M$, we construct a Morley sequence $<e_i>_{i \in \omega}$ (in the sense of $L$) in\\ $p_0 := (p_\EE \mbox{ restricted to } M)$, with $e_0 \in H$. It is sufficient to find some $e_0 \in H$ realizing $p_0$. Since $H$ is \zdense in $\EE$ over $M$, the formula defining $H$ is consistent with $p_0$. By the saturation of the monster model, we find a realization $e_0$. We now show that the $L$-formula $x \in e_0 \CC$ forks over $M$.

Let $p_\CC$ be the generic $L$-type of $\CC$, and let $\BB$ be its stabilizer in $\EE$. Since $p_\CC$ is not generic in $\EE$ (lower Morley Rank), its stabilizer is a proper subgroup of $\EE$ of infinite index. In particular, $e_i^{-1}e_j \notin \BB$, so $e_i p_\CC$ are all distinct, so $\RM( (e_i  \CC) \cap (e_j \CC) ) \lneq \RM(e_i \CC)$. The following exercise completes the proof.

  \emph{Claim}: Suppose that $M$ is a small model, $\phi(x;y)$ is
$M$-definable (in this lemma, $\phi(x;y) := x \in y \CC$), and $\{
e_i \}_{i \in \omega} $ is an $M$-indiscernible sequence such that
$\phi(x, e_i) \wedge \phi(x, e_j)$ has strictly lower Morley rank
than $\phi(x, e_i)$. Then $\phi(x, e_0)$ forks over $M$.

 \emph{Proof of Claim}: Suppose towards contradiction that $\phi(x, e_0)$ does not
fork over $M$. Let $N$ be a bigger model, containing $A$ and all
the $e_i$. Let $p \in S(M)$ be the unique type in over $M$
containing $\phi(x, e_0)$ with $RM(p) = RM(\phi(x, e_0))$.  Let $q
\in S(N)$ be the unique nonforking extension of $p$, that is to
say the unique type over $N$ containing $\phi(x, e_0)$ with $RM(q)
= RM(\phi(x, e_0))$. Since $q$ does not fork over $M$, it is
definable over $M$; let $\theta(y) \in L(M)$ be its definition
with respect to $\phi(x;y)$. Since $\phi(x, e_0) \in q$, it
follows that $theta(e_0)$ holds. Since $\{ e_i \}_{i \in \omega} $
is $M$-indiscernible and $theta$ is over $M$, this implies that
$\theta(e_i)$ holds for all $e_i$. That means that $\phi(x, e_i)
\in q$ for all $i$, and then $\phi(x, e_0) \wedge \phi(x, e_1)$ is
also in $q$. But then $RM ( \phi(x, e_0) \wedge \phi(x, e_1) )
\lneq RM(\phi(x, e_0)) = RM(q)$ gives the desired contradiction.
\end{proof}

 \begin{sheep}
\label{fininddense}
 Suppose that $H$ is an $\lsig$-definable subgroup of a group correspondence $(\EE, \FF)^\sharp$ and $H$ is very dense in $(\EE, \FF)^\sharp$. Then any generic $\lsig$-type $q$ of $H$ and any finite-index $\lsig$-definable subgroup $K$ of $H$ are also very dense in $(\EE, \FF)^\sharp$.

\end{sheep}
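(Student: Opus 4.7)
The plan is to reduce both claims to the definition: a subset of $(\EE,\FF)^\sharp$ is very dense exactly when it is \zdense in $\EE$. So I need to show that the generic $\lsig$-type $q$ of $H$ and the finite-index $\lsig$-definable subgroup $K$ are both \zdense in $\EE$, given that $H$ is.

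For the type $q$, I would invoke Lemma \ref{genericzdense} directly. Working over a small model $M$ over which $\EE$, $\FF$, and $H$ are defined (expanding to such an $M$ if necessary, which is harmless for \zdenseness by Lemma \ref{defzdense}), the hypothesis that $H$ is very dense in $(\EE,\FF)^\sharp$ gives that $H$ is \zdense in $\EE$ over $M$. Lemma \ref{genericzdense} then says precisely that any global generic $\lsig$-type of $H$ is \zdense in $\EE$ over $M$, which is the first claim.

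For the finite-index subgroup $K$, I would use coset decomposition combined with the translation-invariance of Morley rank. Write $H$ as a finite disjoint union of cosets $H = \bigsqcup_{i=1}^{n} h_i K$ with $h_i \in H \subset \EE$. Suppose toward contradiction that $K$ is not \zdense in $\EE$; then there is an $L$-definable $\CC \subset \EE$ with $K \subset \CC$ and $\RM(\CC) < \RM(\EE)$. Left translation by $h_i$ in the $L$-definable group $\EE$ is a definable bijection (with parameter $h_i$), so $\RM(h_i \CC) = \RM(\CC) < \RM(\EE)$. Setting $\CC' := \bigcup_{i=1}^{n} h_i \CC$ gives a definable subset of $\EE$ (over $M \cup \{h_1,\ldots,h_n\}$) of Morley rank strictly less than $\RM(\EE)$ which contains $H$, contradicting the \zdenseness of $H$ in $\EE$ (working over the enlarged parameter set, again justified by the discussion after Lemma \ref{defzdense}). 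Hence $K$ is \zdense in $\EE$, i.e.\ very dense in $(\EE,\FF)^\sharp$.

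Neither step looks like a serious obstacle: the first is a direct citation of Lemma \ref{genericzdense}, and the second is bookkeeping with cosets and parameters. The only subtlety is making sure that enlarging the parameter set to include the coset representatives $h_i$ does not spoil \zdenseness, which is exactly what the paragraph following Lemma \ref{defzdense} licenses us to ignore.
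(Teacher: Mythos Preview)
Your proposal is correct. The first claim, about the generic type $q$, is handled identically to the paper: a direct citation of Lemma~\ref{genericzdense}.

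For the finite-index subgroup $K$, your coset-translation argument is sound, but the paper takes a shorter route. It simply notes that any finite-index subgroup $K$ of $H$ contains a generic type of $H$ (a standard fact for groups in simple theories: a generic of $K$ has the same Lascar rank as $H$, hence is generic in $H$). Since that generic type is very dense by the first part, and $K$ contains it, $K$ is very dense as well. Your argument is more self-contained and avoids invoking generics-in-simple-groups machinery, at the cost of the bookkeeping with coset representatives and parameter sets (which you handle correctly). The paper's argument is one line but leans on the first part doing the real work.
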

\begin{proof}
 If $H$ is very dense in $(\EE, \FF)^\sharp$, then it is \zdense in $\EE$, so the statement about $q$ is an immediate corollary of the last lemma. The rest follows because every finite-index subgroup contains a generic type.
\end{proof}

\begin{llama}
\label{verydensegeneric}
 If an $\lsig$-definable subgroup $H_2$ of a group correspondence $(\EE, \FF)^\sharp$ is very dense, then it has finite index.
\end{llama}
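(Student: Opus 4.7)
The plan is to argue by contradiction. I would first note that $(\EE,\FF)^\sharp$ is itself an $\lsig$-definable subgroup of $\EE$: since $\FF$ is a group correspondence, $(f \boxtimes g)(\FF)$ is an $L$-definable subgroup of $\EE \times \EE^\sigma$, so membership in $(\EE,\FF)^\sharp$ is preserved by the group operation on $\EE$. In particular $H_2^+$ is an $\lsig$-definable subgroup of $(f \boxtimes g)(\FF)$, and it is \zdense in the irreducible $L$-definable group $(f \boxtimes g)(\FF)$ since $H_2$ is very dense in $(\EE,\FF)^\sharp$.

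Suppose for contradiction that $[(\EE,\FF)^\sharp : H_2] = \infty$; then we can pick coset representatives $c_1, c_2, \ldots \in (\EE,\FF)^\sharp$ with the cosets $c_i H_2$ pairwise disjoint. Each coset $c_i H_2$ is very dense in $\EE$ (translate of a very dense set), and each first prolongation $(c_i, \sigma(c_i)) \cdot H_2^+$ is \zdense in $(f \boxtimes g)(\FF)$, giving infinitely many pairwise disjoint \zdense cosets of $H_2^+$ inside this connected $L$-definable group.

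The heart of the proof is to push the problem into a purely $L$-definable setting where the $\omega$-stable structure theorems of Section \ref{prelude} apply. I would use Fact \ref{amadorfact} to obtain a finite-index subgroup $G' \leq (\EE,\FF)^\sharp$, an $L$-definable group $A$, and an $\lsig$-definable homomorphism $\phi: G' \to A$ with finite kernel. Since $[H_2 : H_2 \cap G'] \leq [(\EE,\FF)^\sharp : G'] < \infty$, showing $[G' : H_2 \cap G'] < \infty$ suffices, and because $\ker \phi$ is finite this is equivalent to $[\phi(G') : \phi(H_2 \cap G')] < \infty$ in $A$. Apply Lemma \ref{groupwrap} to $\phi(H_2 \cap G')$ as a subgroup of $A$: it sits inside an irreducible $L$-definable subgroup $\CC \leq A$ of Morley rank equal to the least Morley rank of an $L$-definable set containing $\phi(H_2 \cap G')$. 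By tracking very-density through the finite-to-finite correspondence structure and through $\phi$, this Morley rank equals that of the $L$-Zariski closure of $\phi(G')$; connectedness of the closure then identifies $\CC$ with this closure up to finite index, forcing $[\phi(G') : \phi(H_2 \cap G')] < \infty$.

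The main obstacle is verifying that the very-density of $H_2$ in $\EE$ transfers through prolongations and the homomorphism $\phi$ into a full-Morley-rank statement for $\phi(H_2 \cap G')$ inside the $L$-Zariski closure of $\phi(G')$ in $A$. This requires careful bookkeeping, using the finite-kernel property of $\phi$ together with Fact \ref{basicfacts}(3) to ensure that \zdense subsets of $G'$ map to \zdense subsets of $\phi(G')$, rather than collapsing into some lower-Morley-rank $L$-definable subset of $A$.
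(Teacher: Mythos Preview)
Your plan is genuinely different from the paper's argument, and the gap you flag as ``the main obstacle'' is real and not closed by the hints you give.

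The paper's proof is a three-line application of the fact that $\sigma$-degree witnesses forking. Assume $H_2$ has infinite index; over a small model $M$, take $e$ realizing a generic type $p$ of $(\EE,\FF)^\sharp$, and let $q \in S(Me)$ extend $p \cup \{x\in eH_2\}$ and be generic in the coset $eH_2$. Since $eH_2$ is a translate of the very dense $H_2$, Corollary \ref{fininddense} makes $q$ very dense over $Me$, so $\sigdeg(q/Me)=\RM(\EE)=\sigdeg(p/M)=\sigdeg(q/M)$. But $q$ forks over $M$: any $a\models q$ traps $e$ in the coset $aH_2$, and infinite index forces $U(e/Ma)\leq U(H_2)<U((\EE,\FF)^\sharp)=U(e/M)$. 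This contradicts that $\sigma$-degree witnesses forking.

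Your route has two problems. First, the observation that you get infinitely many pairwise disjoint $\lsig$-definable cosets, each \zdense in the connected $L$-group $(f\boxtimes g)(\FF)$, is not itself a contradiction: nothing in $\omega$-stable group theory forbids infinitely many disjoint \emph{non-$L$-definable} \zdense subsets. Second, and more seriously, the detour through Fact~\ref{amadorfact} introduces an $\lsig$-definable $\phi$ about which you know only that it has finite kernel; there is no reason the $L$-Zariski closure of $\phi(H_2\cap G')$ inside $A$ should have full Morley rank in that of $\phi(G')$. Two subsets of $G'$ with the same $L$-closure in $\EE$ can certainly land in $L$-closures of different Morley rank under an $\lsig$-map. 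The finite-kernel property gives you $\lsig$-interalgebraicity of $x$ and $\phi(x)$, hence equal $\sigma$-degrees, but not equal $L$-Morley ranks of their loci. (Incidentally, invoking Fact~\ref{amadorfact} is unnecessary here: $(\EE,\FF)^\sharp$ already sits inside the $L$-definable group $\EE$, so you could skip straight to Lemma~\ref{groupwrap}; but even then the same gap remains, since \zdense in $\EE$ says nothing about index in $(\EE,\FF)^\sharp$.) The natural tool to close your gap is precisely the comparison of $\sigma$-degrees via Lemma~\ref{sloganlemma} and the forking lemma, which is exactly the paper's direct argument and renders the rest of your machinery superfluous.
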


\begin{proof}
 Suppose towards contradiction that $H_2$ has infinite index in $(\EE, \FF)^\sharp$. Let $M$ be a small model over which everything is defined, and let $p$ be a generic $\lsig$-type of $(\EE, \FF)^\sharp$ over $M$. Let $e \models p$, and extend $p \cup (x \in eH_2)$ to a compete type $q$ over $Me$, generic in $eH_2$. By Lemma \ref{fininddense}, $q$ is very dense in $(\EE, \FF)^\sharp$ over $Me$ and $\deg_\sigma(q) = \RM(\EE) = \deg_\sigma(p)$. But $\sigma$-degree witnesses forking, and $q$ clearly forks over $M$, yielding the desired contradiction.
\end{proof}

We are finally ready to prove the key proposition.
 \begin{prop}
 \label{groupkey}

 If $H$ is an $\lsig$-definable subgroup of an $L$-definable group $\AAA$ with $\sigdeg(H)$ defined, then there are \begin{itemize}
 \item a finite-index subgroup $K \leq H$,
 \item a group correspondence $(\EE, \FF)^\sharp$,
 \item and an injective $\lsig$-definable group homomorphism $K \rightarrow (\EE, \FF)^\sharp$ whose image has finite index in $(\EE, \FF)^\sharp$.
\end{itemize}
\end{prop}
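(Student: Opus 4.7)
The plan is to proceed in three stages: (i) use defined $\sigma$-degree to reduce, via prolongation, to the case $\sigma(h)\in\acl(h)$ for $h\in H$; (ii) apply Lemma~\ref{groupwrap} twice, first to wrap $H$ in an irreducible $L$-definable subgroup $\EE\leq\AAA$, then to wrap its first-prolongation image in an irreducible $L$-definable subgroup $\FF\leq\EE\times\EE^\sigma$; (iii) verify that $(\EE,\FF)^\sharp$ is a genuine group correspondence and invoke Lemma~\ref{verydensegeneric} to upgrade ``very dense'' to ``finite index.''

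For the reduction, pick $N$ witnessing that $\sigdeg(H)$ is defined, so $\sigma^N(h)\in\acl(h,\sigma(h),\ldots,\sigma^{N-1}(h))$ for each $h\in H$. The $(N-1)$st prolongation map $\phi: h\mapsto(h,\sigma(h),\ldots,\sigma^{N-1}(h))$ is an injective $\lsig$-definable group homomorphism from $H$ onto an $\lsig$-definable subgroup $\phi(H)$ of the $L$-definable group $\AAA\times\AAA^\sigma\times\cdots\times\AAA^{\sigma^{N-1}}$. For $\tilde h\in\phi(H)$, each coordinate of $\sigma(\tilde h)$ already appears among the coordinates of $\tilde h$ except $\sigma^N(h)$, which is $L$-algebraic over $\tilde h$; hence $\sigma(\tilde h)\in\acl(\tilde h)$. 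Since a group correspondence for $\phi(H)$ pulls back through $\phi$ to one for $H$, I replace $H$ by $\phi(H)$ and henceforth assume $\sigma(h)\in\acl(h)$ on $H$.

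Let $\alpha$ be the least Morley rank of an $L$-definable superset of $H$. A first application of Lemma~\ref{groupwrap} to $H\leq\AAA$ produces a finite-index subgroup $K_0\leq H$ contained in an irreducible $L$-definable subgroup $\EE\leq\AAA$ with $\RM(\EE)=\alpha$. Set $K_0^+:=\{(k,\sigma(k)) : k\in K_0\}\leq\EE\times\EE^\sigma$, an $\lsig$-definable subgroup. Because $\sigma(k)\in\acl(k)$, every $L$-definable superset of $K_0^+$ must project dominantly onto $\EE$ with generically finite fibers, so the least Morley rank of an $L$-definable superset of $K_0^+$ equals $\alpha$. A second application of Lemma~\ref{groupwrap}, now to $K_0^+\leq\EE\times\EE^\sigma$, then yields a finite-index subgroup $K\leq K_0$ and an irreducible $L$-definable subgroup $\FF\leq\EE\times\EE^\sigma$ of Morley rank $\alpha$ containing $K^+$. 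Both projections $\FF\to\EE$ and $\FF\to\EE^\sigma$ are $L$-definable group homomorphisms whose images are $L$-definable subgroups of $\EE$, $\EE^\sigma$ containing the \zdense subgroups $K$, $K^\sigma$, so they are surjective; matching Morley ranks force the kernels to be finite, so the projections are finite dominant. Thus $(\EE,\FF)^\sharp$ is a group correspondence in the sense of Definition~\ref{groupcorrdef}. The inclusion $K\hookrightarrow(\EE,\FF)^\sharp$ is an injective $\lsig$-definable group homomorphism; $K$ is \zdense in $\EE$ and hence very dense in $(\EE,\FF)^\sharp$, so Lemma~\ref{verydensegeneric} gives that $K$ has finite index in $(\EE,\FF)^\sharp$.

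The only delicate point is the second application of Lemma~\ref{groupwrap}: one must ensure the resulting $\FF$ has Morley rank exactly $\alpha$, so that $(\EE,\FF)^\sharp$ is a finite-to-finite correspondence and not merely an irreducible subgroup of $\EE\times\EE^\sigma$. This is precisely what the preliminary reduction $\sigma(h)\in\acl(h)$ provides. Everything else is bookkeeping: composing the two finite-index reductions, and chasing inclusions through $\phi^{-1}$ back to the original $H$.
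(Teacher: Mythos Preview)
Your approach is the paper's: prolong to force $\sigma(h)\in\acl(h)$, apply Lemma~\ref{groupwrap} twice (first to get $\EE$, then to get $\FF$), and finish with Lemma~\ref{verydensegeneric}. Two details need tightening, however.

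First, the sentence ``every $L$-definable superset of $K_0^+$ must project dominantly onto $\EE$ with generically finite fibers'' is false as written: $\EE\times\EE^\sigma$ is itself an $L$-definable superset whose first-coordinate fibres are all of $\EE^\sigma$. What you want is that \emph{some} superset has finite fibres --- namely the set cut out by the uniform algebraicity formula $\chi(x,y)$ coming (by compactness) from $\sigma(k)\in\acl(k)$ --- giving the upper bound $\alpha$; and every superset projects onto something containing the Zariski-dense $K_0$, giving the lower bound $\alpha$. This is a slip, not a structural problem.

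Second, and more substantively, the step ``matching Morley ranks force the kernels to be finite'' is not justified. For a surjective definable group homomorphism $p:\FF\to\EE$ in an arbitrary totally transcendental theory, $\RM(\FF)=\RM(\EE)$ does not obviously force $\ker p$ to be finite: Morley rank need not be additive along short exact sequences of $\omega$-stable groups. The paper sidesteps this by arguing directly with the algebraicity formula: since a generic $q^+$ of $K^+$ is Zariski-dense in $\FF$ (Lemma~\ref{genericzdense}) and satisfies $\chi$, the set $\chi\cap\FF$ has full Morley rank in $\FF$; the projection restricted to $\chi\cap\FF$ has finite fibres and dominant image, so it is finite dominant in the sense of Definition~\ref{findom}, whence (being a group homomorphism) the full projection has finite kernel. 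You already have $\chi$ in hand, so the repair is immediate --- but as written this is a genuine gap.
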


\begin{proof}
 Let $E$ be a small, somewhat saturater model of $T_A$ over which everything is defined.
 Let $p \in S(E)$ be a model-theoretically generic type in $H$ with $\sigdeg(p)$ defined.
 Then the $\sigma$-degree of all generic types of $H$ over $E$ is defined and equal to $\sigdeg(p)$, since those types are $H(E)$-translates of $p$. 

The statement of the theorem is invariant under prolongations, so we may assume without loss of generality (up to replacing $p$ and $H$ by their $(N-1)$st prolongations, and $\AAA$ by $\AAA \times \AAA^\sigma \times \ldots \AAA^{(\sigma^{N-1})}$ for $N$ from the definition of $\sigma$-degree) that $\acl(Ea) = \acl(E \sigma(a))$ for any realization $a$ of any generic type $q \in S(E)$ of $H$. This prolongation is the ``injective $\lsig$-definable group homomorphism'' from the conclusion of the theorem. We now look for $\EE$ and $\FF$.

We now apply lemma \ref{groupwrap} to $H$ inside $\AAA$ to get $H_1 \leq \EE$. So $H_1$, an $\lsig$-definable finite index subgroup of $H$, is \zdense in $\EE$, an irreducible, $L$-definable subgroup of $\AAA$.

 Then we apply lemma \ref{groupwrap} to the first prolongation $H_1^+$ of $H_1$ inside $\EE \times \EE^\sigma$ to get $\widetilde{H_2} \leq \FF$.
 So $\widetilde{H_2}$, an $\lsig$-definable finite index subgroup of $H_1^+$, is \zdense in $\FF$, an irreducible, $L$-definable subgroup of $\EE \times \EE^\sigma$.
  Let $K := \{ a \sthat \exists\, b\, (a,b) \in \widetilde{H_2} \}$, the projection of $\widetilde{H_2}$ into $\EE$. Since $\widetilde{H_2}$ is a subset of $H_1^+$, for every $(a, b) \in \widetilde{H_2}$ we have $b = \sigma(a)$; in other words, $\widetilde{H_2}$ is the first prolongation of $K$. Since $\widetilde{H_2} = K^+$ is a finite-index subgroup of $H_1^+$, it follows that $K$ is a finite-index subgroup of $H_1$.
 Now $H_1$ is \zdense in $\EE$, so by Corollary \ref{fininddense}, $K$ is \zdense in $\EE$.

 Let $q \in S(E)$ be a model-theoretically generic type in $K$. Since both inclusions $K \leq H_1 \leq H$ have finite index, $q$ is generic in $H$. So $\acl(Ea) = \acl(E \sigma(a))$ for any realization $a \models q$. Let $\phi(x,y)$ be the $L$-formula that witnesses this; that is, $\phi(x, \sigma(x)) \in q$ and $\acl(Eb) = \acl(Ec)$ for any $(b,c) \models \phi(x,y)$. Now the first prolongation $q^+$ is a generic type of $K^+$, therefore \zdense in $\FF$ by Lemma \ref{genericzdense}. Since $q^+$ contains the formula $\phi(x,y)$, this implies that $\RM(\FF \cap \phi(x,y)) = \RM(\FF)$. The purpose of $\phi$ is that the projection onto the first coordinate from $\FF \cap \phi(x,y)$ to $\EE$ has finite fibers, and the image of this projection contains $q$ which is \zdense in $\EE$. So this projection is a finite dominant function from $\FF \cap \phi(x,y)$ to $\EE$ , so $\RM(\FF \cap \phi(x,y)) = \RM(\EE)$. Thus, $\RM(\FF) = \RM(\EE)$ and the projection from $\FF$ to $\EE$ is finite dominant, so we may write $(\EE, \FF)^\sharp$.

 We already saw that $K$ is a finite index subgroup of $H$, that $K^+ \leq \FF$, and that $K$ is \zdense in $\EE$; now Lemma \ref{verydensegeneric} finishes the proof.

  \end{proof}

Without assuming that the $\sigma$-degree of $H$ is defined, it is easy enough to find a monogeny from $H$ to some $(\EE, \FF)^{sh}$ with the projections from $\FF$ to $\EE$ and $\EE^\sigma$ dominant but not necessarily finite. In ACFA, it is also possible to make the image of the isogeny have finite index in $(\EE, \FF)^{sh}$ by prolonging far enough. However, this relies on the Noetherianity of perfect difference ideals, and we do not expect the generalization to $T_A$ to be straightforward, even if possible.

\subsection{Characterizing groups and obtaining commutative diagrams.}

To remove the assumption that $H$ is a subgroup of an $L$-definable group, recall Fact \ref{amadorfact} (Theorem 2.15 in \cite{amador}).

\begin{fact*} (Fact \ref{amadorfact})
For any $\lsig$-definable group $H$, there are a finite-index subgroup $H'$ of $H$, an $L$-definable group $\AAA$, and an $\lsig$-definable group homomorphism $\phi: H' \rightarrow \AAA$ with finite kernel.
\end{fact*}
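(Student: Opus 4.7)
The plan is to deploy the classical group-configuration / group-chunk machinery on a generic of $H$ in the sense of $T_A$, using as the crucial input that $T_A$ is one-based over $T$ with respect to $\aclsig$ --- precisely the feature that lets the configuration descend from the $\lsig$-setting to the $L$-setting.

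First I would take a sufficiently saturated model $E \models T_A$ over which $H$ is defined and let $p \in S(E)$ be a model-theoretic generic of $H$. Picking two $\lsig$-independent realizations $a, b \models p$ and setting $c := a \cdot_H b$ yields a triple which, by the standard stable/simple group theory, is pairwise $\lsig$-independent over $E$ and in which each pair $\lsig$-algebraically determines the third --- a group configuration in $T_A$. Choosing $N$ large enough that $\aclsig(Ea) = \acl(E, a, \sigma(a), \ldots, \sigma^N(a))$, which is guaranteed by Fact \ref{basicfacts}(\ref{alclosure}), I replace $a$ by its $N$th prolongation $\tilde{a}$, and similarly for $b, c$. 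By Fact \ref{basicfacts}(\ref{indepinTA}), $\lsig$-independence of $a, b, c$ over $E$ translates directly into $L$-independence of $\tilde{a}, \tilde{b}, \tilde{c}$ over $E$, and the $\lsig$-algebraicity relations on the original triple become $L$-algebraicity relations on the prolonged triple. Now Hrushovski's group configuration theorem, applied inside the totally transcendental theory $T$, produces an $L$-definable connected group $\AAA$ together with a generic $L$-interalgebraicity of $\tilde{a}$ with an element of $\AAA$, and identifies the germ of $(a, b) \mapsto c$ with the group law of $\AAA$.

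To finish, I would verify that the assignment sending the prolongation of a generic element of $H$ to its corresponding element of $\AAA$ extends to an $\lsig$-definable group homomorphism $\phi: H' \to \AAA$ defined on some finite-index subgroup $H' \leq H$, by the usual stabilizer-of-a-generic trick on the domain and the rigidity of $\AAA$ on the range. The finiteness of $\ker \phi$ is exactly where one-basedness of $T_A$ over $T$ with respect to $\aclsig$ must be invoked: it forces two generics of $H$ whose prolongations are $L$-interalgebraic to already be $\lsig$-interalgebraic, so distinct cosets of a suitable finite-index subgroup hit distinct elements of $\AAA$. I expect the main obstacle to live in precisely this last step, namely passing from the germ-level, generic data output by the group configuration theorem to an honest definable homomorphism on a finite-index subgroup with genuinely \emph{finite} kernel. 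This descent from generic data to definable data, and in particular the finite-kernel statement, is where the \cite{amador} argument concentrates its real effort, and where the hypothesis that $T_A$ is one-based over $T$ with respect to $\aclsig$ cannot be circumvented.
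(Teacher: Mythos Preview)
The paper does not prove this statement: it is quoted as a black-box citation of Theorem 2.15 in \cite{amador} (Blossier, Martin-Pizarro, Wagner), and the paper explicitly says it applies because $T_A$ is one-based over $T$ with respect to $\aclsig$. So there is no ``paper's own proof'' to compare against; you have written a proof sketch where the paper simply invokes an external result.

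That said, your sketch has a genuine gap that is worth flagging. You write ``Choosing $N$ large enough that $\aclsig(Ea) = \acl(E, a, \sigma(a), \ldots, \sigma^N(a))$, which is guaranteed by Fact \ref{basicfacts}(\ref{alclosure}).'' But Fact \ref{basicfacts}(\ref{alclosure}) only gives $\aclsig(Ea) = \acl\bigl(\bigcup_{i \in \mathbb{Z}} \sigma^i(Ea)\bigr)$, an \emph{infinite} union. Truncating to a finite prolongation is exactly the condition that $\sigdeg(a/E)$ is defined, and Fact \ref{amadorfact} is stated for an \emph{arbitrary} $\lsig$-definable group $H$, with no such hypothesis. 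Indeed, the paper later (Theorem \ref{classifygp}) combines Fact \ref{amadorfact} with the defined-$\sigma$-degree hypothesis to get the sharper conclusion involving a group correspondence $(\EE,\FF)^\sharp$; your argument conflates these two steps. The actual proof in \cite{amador} must work at the level of infinite prolongations (hyperimaginaries, canonical bases in the simple theory $T_A$, etc.) and cannot be reduced to a single finite-tuple group configuration in $T$ in the way you describe. Your instinct that the hard part is the passage from germ to honest homomorphism with finite kernel is correct, but the obstacle appears earlier than you locate it.
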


We use this fact to characterize $\lsig$-definable groups with defined $\sigma$-degree.

\begin{thm} \label{classifygp}
 For any $\lsig$-definable group $H$ with a generic type $p$ with defined $\sigdeg(p)$, there are
 \begin{itemize}
 \item a finite-index subgroup $K \leq H$,
 \item a group correspondence $(\EE, \FF)^\sharp$,
 \item and an $\lsig$-definable group homomorphism $\phi: K \rightarrow (\EE, \FF)^\sharp$ whose image has finite index in $(\EE, \FF)^\sharp$ and whose kernel is finite.
\end{itemize}
\end{thm}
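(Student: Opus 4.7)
The plan is to reduce the statement to Proposition \ref{groupkey} by first landing $H$ inside an $L$-definable group via Fact \ref{amadorfact}; the only substantive check will be that the hypothesis of defined $\sigma$-degree survives that first reduction, since both Fact \ref{amadorfact} and Proposition \ref{groupkey} already do the heavy lifting.

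First I would apply Fact \ref{amadorfact} to obtain a finite-index subgroup $H_0 \leq H$, an $L$-definable group $\AAA$, and an $\lsig$-definable group homomorphism $\phi : H_0 \to \AAA$ with finite kernel. Set $H_1 := \phi(H_0) \leq \AAA$, which is $\lsig$-definable.

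Next I would verify that $H_1$ admits a generic type of defined $\sigma$-degree. The given generic type $p$ of $H$ with defined $\sigdeg(p)$ lies in some $H_0$-coset $hH_0$; after translating $p$ by $h^{-1}$ (an $\lsig$-definable bijection, hence preserving $\sigma$-degree by Corollary \ref{a3}) we obtain a generic $p_0$ of $H_0$ with $\sigdeg(p_0)$ defined. Since $\ker \phi$ is finite, any realization $a \models p_0$ is $\lsig$-interalgebraic with $\phi(a)$, so Corollary \ref{a3} again gives that $\phi_\ast p_0$ is a generic of $H_1$ with the same defined $\sigma$-degree.

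Then I would apply Proposition \ref{groupkey} to $H_1 \leq \AAA$ to obtain a finite-index subgroup $K_1 \leq H_1$, a group correspondence $(\EE, \FF)^\sharp$, and an injective $\lsig$-definable group homomorphism $\psi : K_1 \hookrightarrow (\EE, \FF)^\sharp$ whose image has finite index. Finally I would set $K := \phi^{-1}(K_1) \cap H_0$; this has finite index in $H_0$, hence in $H$, because $K_1$ has finite index in $H_1$. The composite $\psi \circ (\phi|_K) : K \to (\EE, \FF)^\sharp$ has kernel contained in the finite group $\ker \phi$ and image equal to $\psi(K_1)$, which has finite index in $(\EE, \FF)^\sharp$; this is the homomorphism required by the theorem. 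I do not expect any further obstacle.
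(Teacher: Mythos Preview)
Your proposal is correct and follows essentially the same route as the paper: apply Fact \ref{amadorfact}, translate $p$ into the finite-index subgroup using Corollary \ref{a3}, apply Proposition \ref{groupkey}, and compose the two monogenies. Your version is in fact slightly more explicit than the paper's, since you pass to the image $H_1 = \phi(H_0) \leq \AAA$ before invoking Proposition \ref{groupkey} (which strictly speaking requires a subgroup of an $L$-definable group) and you check separately that the pushed-forward type $\phi_\ast p_0$ still has defined $\sigma$-degree via uniform interalgebraicity; the paper compresses this into the single sentence ``Proposition \ref{groupkey} can be applied to $p'$, $H'$, and $\AAA$'' and the remark that a composition of monogenies is a monogeny. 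The only small point you leave implicit is that the coset representative $h$ can be found over the base model, which is exactly how the paper justifies the translation step.
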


\begin{proof}
 Let $E$ be a small model of $T_A$ over which everything is defined.
 First, we use Fact \ref{amadorfact} to obtain $H'$ and $\AAA$.
 Since $E$ is a model, $p$ specifies a coset of $H'$ and $E(H)$ contains an element $b$ of that coset. Let $p' := b^{-1}p$, an $H(E)$-translate of $p$ which is inside $H'$. Now $\sigdeg(p')$ is defined: the same as $\sigdeg(p)$, since by Corollary \ref{a3} $\sigma$-degree is invariant under $\lsig$-definable bijections such as $x \mapsto b^{-1}x$. Also, $p'$ is a generic of $H$, and therefore also a generic of $H'$. Thus, Proposition \ref{groupkey} can be applied to $p'$, $H'$, and $\AAA$. To finish the proof, observe that a composition of two \emph{$\lsig$-definable group homomorphisms with finite kernels and finite-index domains}, called \emph{monogenies} in \cite{amador}, is another monogeny.
\end{proof}

The next corollary connects the conclusions of the last theorem to the hypotheses of the diagram-producing Corollary \ref{moshediag}.

\begin{sheep}
With the hypotheses and notation of Theorem \ref{classifygp}, any generic type $p_1$ in $H$ is interalgebraic with some generic type $q_1$ in $(\EE, \FF)^\sharp$ whose realizations are very dense in $(\EE, \FF)^\sharp$.
\end{sheep}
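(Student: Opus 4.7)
The plan is to chase through the three items produced by Theorem \ref{classifygp}. Let $E$ be a small model of $T_A$ over which $H$, $K$, $(\EE,\FF)^\sharp$, and $\phi$ are defined, and let $p_1 \in S(E)$ be the given generic type in $H$. First I would translate $p_1$ into $K$: since $E$ is a model, the coset of $K$ cut out by $p_1$ contains some $b \in H(E)$, and the translate $p_1' := b^{-1} p_1 \in S(E)$ is then a generic of $K$, interalgebraic with $p_1$ via the $\lsig$-definable bijection $x \mapsto b^{-1} x$.

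Next, I would push $p_1'$ forward along $\phi$ to get $q_1 := \phi_* (p_1') \in S(E)$. Because $\phi$ is $\lsig$-definable with finite kernel, $c$ and $\phi(c)$ are $\lsig$-interalgebraic over $E$ for any realization $c \models p_1'$: the inclusion $\phi(c) \in \aclsig(Ec)$ is trivial, and $c$ lies in the finite fibre $\phi^{-1}(\phi(c))$, which is $\lsig$-definable over $E \phi(c)$. Composed with the first step, this makes $p_1$ and $q_1$ interalgebraic. That $q_1$ is generic in $(\EE,\FF)^\sharp$ follows from two observations: $q_1$ is generic in $\phi(K)$ because $\phi : K \to \phi(K)$ is a surjective $\lsig$-definable homomorphism with finite kernel, hence preserves $U$-rank and sends the generic $p_1'$ of $K$ to a $U$-rank maximal type of $\phi(K)$; and generics of the finite-index subgroup $\phi(K) \leq (\EE,\FF)^\sharp$ coincide with the generics of $(\EE,\FF)^\sharp$ that land in $\phi(K)$.

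Finally, I would verify very density. By Proposition \ref{sharpdense}, $\RM((\EE,\FF)^\sharp) = \RM(\EE)$; since $\phi(K)$ has finite index in $(\EE,\FF)^\sharp$, its Morley rank is also $\RM(\EE)$, so $\phi(K)$ is very dense in $(\EE,\FF)^\sharp$. Corollary \ref{fininddense}, applied to the group correspondence $(\EE,\FF)^\sharp$, its very dense $\lsig$-definable subgroup $\phi(K)$, and the generic $q_1$ of $\phi(K)$, then concludes that $q_1$ is very dense in $(\EE,\FF)^\sharp$, completing the proof. The mildly subtle step is the passage from generics of $K$ to generics of $\phi(K)$; I would handle it via $U$-rank, or equivalently via Corollary \ref{a3} combined with Lemma \ref{sloganlemma}, which together pin down the $\sigma$-degree of $q_1$ at the top of the scale inside $\phi(K)$.
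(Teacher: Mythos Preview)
Your proof is correct and follows essentially the same route as the paper's: translate $p_1$ into $K$ by an element of $H(E)$, push forward along $\phi$, argue genericity via finite kernel and finite index, and invoke Corollary \ref{fininddense} for very density. One small quibble: in this paper $\RM$ is reserved for $L$-definable sets, so the sentence ``$\RM((\EE,\FF)^\sharp) = \RM(\EE)$'' is not well-formed as written; the cleanest way to see that $\phi(K)$ is very dense is simply to apply the finite-index clause of Corollary \ref{fininddense} with $H = (\EE,\FF)^\sharp$ (very dense in itself by Proposition \ref{sharpdense}) and its finite-index subgroup $\phi(K)$ --- or, as the paper does, skip this step entirely and apply Corollary \ref{fininddense} directly to the generic $q_1$ of $(\EE,\FF)^\sharp$.
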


\begin{proof}
 As in the proof of the theorem, let $p_1'$ be a translate of $p_1$ inside $K$, and let $q_1 := \phi(p_1')$. Since $p_1'$ is generic in $K$ and the kernel of $\phi$ is finite, it follows that $q_1$ is generic in $\phi(K)$. Since $\phi(K)$ has finite index in $(\EE, \FF)^\sharp$, it follows that $q_1$ is generic in $(\EE, \FF)^\sharp$. Lemma \ref{fininddense} 
 finishes the proof.
 \end{proof}

\begin{sheep}
\label{gpdiagta}
 If some very dense $\lsig$-type $r$ in $(\AAA, \BB)^\sharp$ is interalgebraic with some generic type $p$ of some $\lsig$-definable group $H$, then there is a group correspondence $(\EE,\FF)^\sharp$, and further irreducible sets $\CC$ and $\DD$ and $L$-definable finite dominant functions such that the following diagram commutes:
 \begin{equation*}\begin{CD}
\EE @<<< \FF @>>> \EE^{\sigma}\\
@AA{\rho}A @AAA @AA{\rho^\sigma}A\\
\CC @<<< \DD @>>> \CC^{\sigma}\\
@VV{\pi}V @VVV @VV{\pi^\sigma}V\\
\AAA @<<< \BB @>>> \AAA^{\sigma}\\
\end{CD}\end{equation*}

 In this diagram, the horizontal arrows are projections, since $\BB \subset \AAA \times \AAA^\sigma$, $\DD \subset \CC \times \CC^\sigma$, and $\FF \subset \EE \times \EE^\sigma$, and the two middle vertical arrows are restrictions of $\pi \times \pi^\sigma$ and $\rho \times \rho^\sigma$ to $\DD$.
\end{sheep}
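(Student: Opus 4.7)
The plan is to combine Theorem \ref{classifygp}, which replaces $H$ with a group correspondence, with Theorem \ref{moshethm}, which manufactures a commutative diagram out of a uniformly interalgebraic pair of types sitting in two sharps.

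First I would check that $\sigdeg(p)$ is defined, so that Theorem \ref{classifygp} is applicable. Since $r$ is very dense in $(\AAA,\BB)^\sharp$, Lemma \ref{sloganlemma} gives $\sigdeg(r) = \RM(\AAA)$, and Corollary \ref{a3} then transfers definedness across the interalgebraicity of $r$ and $p$. Applying Theorem \ref{classifygp} to $H$ and $p$ yields a finite-index subgroup $K \leq H$, a group correspondence $(\EE,\FF)^\sharp$, and an $\lsig$-definable monogeny $K \to (\EE,\FF)^\sharp$. The corollary immediately preceding Corollary \ref{gpdiagta} then produces a generic type $q$ of $(\EE,\FF)^\sharp$, very dense in $(\EE,\FF)^\sharp$, that is interalgebraic with $p$. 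Chaining, $r$ and $q$ are interalgebraic, and a single formula witnessing interalgebraicity of two complete types extends (by compactness, with a uniform fiber bound) to witness uniform interalgebraicity in the sense required by Theorem \ref{moshethm}.

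Now I apply Theorem \ref{moshethm} with $S = r$ in $(\AAA,\BB)^\sharp$ and $T = q$ in $(\EE,\FF)^\sharp$. To avoid a clash with the notation for the group correspondence, I relabel the output sets of that theorem, which it calls $\EE$ and $\FF$, as $\CC$ and $\DD$. Conclusion (3) gives an irreducible finite-to-finite correspondence $\CC \subset \AAA \times \EE$, from which I read off the finite dominant projections $\pi: \CC \to \AAA$ and $\rho: \CC \to \EE$. Conclusion (2) produces an irreducible finite-to-finite correspondence $\DD \subset \BB \times \FF$, and conclusion (3) asserts that the projections of $\DD$ onto the two pairs of coordinates yield exactly $\CC$ and $\CC^\sigma$. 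This shows that $(\CC,\DD)^\sharp$ is well defined, that the two inner horizontal arrows of the middle row are the defining projections, and that the two middle vertical arrows are the restrictions of $\pi \times \pi^\sigma$ and $\rho \times \rho^\sigma$ to $\DD$. Commutativity of the four squares is then automatic from the fact that every arrow is one of the defining projections from $\DD$.

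Once Theorems \ref{classifygp} and \ref{moshethm} are in hand the argument is purely formal, so no genuine obstacle remains. The one slightly delicate point is the bookkeeping that identifies the two finite-to-finite correspondences output by Theorem \ref{moshethm} (one between $\AAA$ and $\EE$, one between $\BB$ and $\FF$) with the sharp $(\CC,\DD)^\sharp$ together with the four vertical maps appearing in the diagram.
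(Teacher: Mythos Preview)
Your proposal is correct and follows essentially the same route as the paper: establish that $\sigdeg(p)$ is defined via Corollary \ref{a3}, invoke Theorem \ref{classifygp} together with the corollary immediately preceding \ref{gpdiagta} to obtain the group correspondence $(\EE,\FF)^\sharp$ and a very dense type $q$ in it interalgebraic with $r$, and then feed the pair $(r,q)$ into the interalgebraicity machinery of Section~\ref{moshethmsection} to produce the diagram. The only cosmetic difference is that the paper cites Corollary \ref{moshediag} for the last step while you unpack Theorem \ref{moshethm} directly; your choice is arguably cleaner, since Corollary \ref{moshediag} carries a $U$-rank~$1$ hypothesis that is not assumed here, whereas Theorem \ref{moshethm} needs only uniform interalgebraicity and very density, both of which you have.
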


\begin{proof}

 Any type in $(\AAA, \BB)^\sharp$ has defined $\sigma$-degree, so by Lemma \ref{a3} $p$ also has defined $\sigma$-degree. Thus we can apply the last corollary to $p$ and $H$ to obtain the group correspondence $(\EE,\FF)^\sharp$ and a very dense type $q_1$ in it, interalgebraic with $p$ and, therefore, also with $r$. Now Corollary \ref{moshediag} applies to $r$ in $(\AAA, \BB)^\sharp$ and $q_1$ in $(\EE, \FF)^\sharp$, providing the diagram.
\end{proof}

 We return briefly to ACFA to prove Corollary \ref{gpdiagthm}; we repeat the statement here.

\begin{sheep*} (Corollary \ref{gpdiagthm}) (ACFA)
 If some very dense $\lsig$-type $p$ in $(\AAA, \BB)^\sharp$ is grouplike
, then there is a group correspondence $(\EE,\FF)^\sharp$, and further irreducible sets $\CC$ and $\DD$ and $L$-definable finite dominant functions such that the following diagram commutes:
 \begin{equation*}\begin{CD}
\EE @<<< \FF @>>> \EE^{\sigma}\\
@AA{\rho}A @AAA @AA{\rho^\sigma}A\\
\CC @<<< \DD @>>> \CC^{\sigma}\\
@VV{\pi}V @VVV @VV{\pi^\sigma}V\\
\AAA @<<< \BB @>>> \AAA^{\sigma}\\
\end{CD}\end{equation*}

 In this diagram, the horizontal arrows are projections, since $\BB \subset \AAA \times \AAA^\sigma$, $\DD \subset \CC \times \CC^\sigma$, and $\FF \subset \EE \times \EE^\sigma$, and the two middle vertical arrows are restrictions of $\pi \times \pi^\sigma$ and $\rho \times \rho^\sigma$ to $\DD$.

\end{sheep*}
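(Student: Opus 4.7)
The plan is to reduce this Corollary to the already-proved Corollary \ref{gpdiagta} by using exactly one ACFA-specific input: the Zilber trichotomy. Concretely, I need to show that under the grouplike hypothesis on $p$, the type $p$ is interalgebraic with a generic type of some $\lsig$-definable group; once that is in hand, Corollary \ref{gpdiagta} produces the displayed diagram with no further work.

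First, since $p$ is grouplike (minimal, locally modular, non-trivial), the Zilber Trichotomy for ACFA (the Fact of Chatzidakis--Hrushovski--Peterzil quoted just before the statement of Corollary \ref{gpdiagthm}) rules out both the fieldlike and the trivial cases, so $p$ must be non-orthogonal to a generic type $q$ of some $\lsig$-definable minimal modular group $H$. Note that $q$ is itself a minimal $\lsig$-type, so both $p$ and $q$ are $U$-rank $1$ types.

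Second, I promote non-orthogonality to interalgebraicity: non-orthogonal $U$-rank $1$ types are uniformly interalgebraic. This is exactly the observation used in the last sentence of the proof of Corollary \ref{a3}, and it was already invoked inside the proof of Corollary \ref{moshediag} to turn non-orthogonality into the hypothesis needed for Theorem \ref{moshethm}. As in those proofs, parameters are suppressed: if necessary we pass to non-forking extensions of $p$ and $q$ over a sufficiently saturated base model of $T_A$ over which $\AAA$, $\BB$, $H$, and the formula witnessing interalgebraicity are all defined, which is harmless for the diagram to be produced.

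Third and last, I apply Corollary \ref{gpdiagta} directly with $r := p$ (the very dense $\lsig$-type in $(\AAA,\BB)^\sharp$) and with the generic type $q$ of the $\lsig$-definable group $H$ provided by step one; the interalgebraicity established in step two is precisely the hypothesis of that corollary, and its conclusion is the commutative diagram we want.

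The only potential obstacle is bookkeeping of parameters in step two, since ``non-orthogonal minimal types are interalgebraic'' is literally true only after passing to a common base over which the witnessing formula is definable; but this is already handled by the convention adopted in the proof of Corollary \ref{moshediag}, so no genuinely new difficulty arises, and the ACFA hypothesis is used only once, to invoke the trichotomy.
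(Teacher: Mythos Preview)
Your proof is correct and follows essentially the same route as the paper: the paper's one-line argument simply observes that ``grouplike'' (unpacked via the trichotomy as minimal and non-orthogonal to a generic type of a definable modular group) is a stronger hypothesis than the interalgebraicity required by Corollary~\ref{gpdiagta}, and you have just spelled out that implication in more detail.
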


\begin{proof}
 The hypothesis `` p is grouplike'', i.e. minimal and non-orthogonal to a generic type of an $\lsig$-definable modular group, is clearly stronger that the hypothesis `` p is interalgebraic with a generic type of an $\lsig$-definable group'' in the last corollary.
\end{proof}

\section{Chasing diagrams}
\label{chase}

 In this section we make heavy use of the ideas in section \ref{funnylemmas} to chase the diagram obtained in Corollary \ref{gpdiagthm}, or, to be more precise, to count the degrees of functions in that diagram. Halfway through this section we restrict our attention to correspondences $(\AAA, \BB)^\sharp$ where $\BB$ is the graph of a function from $\AAA$ to $\AAA^\sigma$. We do not know how to remove this restriction from the algebraic geometry arguments in the second half of the author's thesis (to be exposed in another paper) toward which we are building in this paper. However, even the restricted result has been very useful in \cite{polydyn}.

It seems that there should be a slicker proof of Proposition \ref{firstchaseprop}
 that does not rely on the top row being a group correspondence, but I cannot make it work. 
 In particular, I have neither proof nor counterexample to the following refinement of a special case of Theorem \ref{moshethm}:

\begin{wish}
 If $(\AAA, \BB)^\sharp$ and $(\CC, \DD)^\sharp$ are definably isomorphic, then the degrees of the projections in the correspondence $(\EE, \FF)^\sharp$ obtained in Theorem \ref{moshethm} can be bounded by the degrees of the projections in $(\AAA, \BB)^\sharp$ and $(\CC, \DD)^\sharp$.
\end{wish}

 In the special case with a group correspondence in the top row of the diagram, we can make do without this wish, by means of a somewhat opaque diagram chase given below. It is given in rather more detail in the author's thesis \cite{mythesis}, in the language of algebraic geometry.

\subsection{One diagram chase.}

 The purpose of this section is Proposition \ref{firstchaseprop} which bounds the degrees of the functions in the middle row of the diagram in Corollary \ref{gpdiagthm} by the degrees of the functions in the bottom row.

\begin{prop}
\label{firstchaseprop}
 Given the commutative diagram of irreducible $L$-definable sets and finite dominant rational functions from Corollary \ref{gpdiagthm}

\begin{equation*}\begin{CD}
\EE @<\psi<< \FF @>\phi>> \EE^{\sigma}\\
@AA{\rho}A @AAA @AA{\rho^\sigma}A\\
\CC @<\beta<< \DD @>\alpha>> \CC^{\sigma}\\
@VV{\pi}V @VVV @VV{\pi^\sigma}V\\
\AAA @<g<< \BB @>f>> \AAA^{\sigma}\\
\end{CD}\end{equation*}

 where  \begin{itemize}
\item  the horizontal arrows are projections, so $\BB \subset \AAA \times \AAA^\sigma$, $\DD \subset \CC \times \CC^\sigma$, and $\FF \subset \EE \times \EE^\sigma$;
\item $(\EE, \FF)^\sharp$ is a group correspondence; and
\item the two middle vertical arrows are restrictions of $\pi \times \pi^\sigma$ and $\rho \times \rho^\sigma$ to $\DD$;
\end{itemize}
 we construct another diagram of the same shape, with the same properties and the same $\AAA$, $\BB$, $f$ and $g$, satisfying an additional assumption that $\deg(\beta) \leq \deg(g)$.
\end{prop}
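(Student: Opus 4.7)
The plan is to exploit the multiplicativity of degrees in the bottom square. A direct fiber-count (writing $m:\DD\to\BB$ for the middle vertical) gives
\[
\deg(\pi)\cdot\deg(\beta) \;=\; \deg(g)\cdot\deg(m),
\]
so the goal $\deg(\beta)\leq\deg(g)$ is equivalent to $\deg(m)\leq\deg(\pi)$; equivalently, the map $(\beta,\pi^\sigma\circ\alpha):\DD\to\CC\times\AAA^\sigma$ should be generically injective. The whole task is therefore to modify $\CC$ and $\DD$ so that this injectivity holds while preserving everything else in the diagram.

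A first reduction: by Lemma \ref{shareinit}, replace $\CC$ by the image of $(\rho,\pi):\CC\to\EE\times\AAA$, so that $(\rho,\pi)$ becomes generically injective and a point of $\CC^\sigma$ is determined by its pair of images $(\rho^\sigma(c'),\pi^\sigma(c'))$. Redefine $\DD$ as the corresponding image in $\CC\times\CC^\sigma$ and check that the diagram still commutes. After this reduction, any fiber $\beta^{-1}(c)$ embeds into the cartesian product of $\psi^{-1}(\rho(c))$ (of size $\deg(\psi)$) and $g^{-1}(\pi(c))$ (of size $\deg(g)$), yielding the preliminary bound $\deg(\beta)\leq\deg(\psi)\cdot\deg(g)$.

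The second stage is to collapse the extra factor of $\deg(\psi)$, and this is where the group-correspondence hypothesis on $(\EE,\FF)^\sharp$ is essential: because $\FF$ is a subgroup of $\EE\times\EE^\sigma$, the fiber $\psi^{-1}(\rho(c))$ is a \emph{coset} of the finite subgroup $\phi(\ker\psi)\leq\EE^\sigma$, not an arbitrary finite set. I would pass to a finite cover of $\EE$ (for example $\widetilde{\EE}=\FF$, viewed as a cover of $\EE$ via $\psi$) on which the top row splits: the pulled-back correspondence $\widetilde{\FF}$ is then forced to be the graph of a group homomorphism, so the new top-row projection $\widetilde\psi$ is an isomorphism. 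One then lifts the middle of the diagram through this cover, using Lemma \ref{groupclosing} (least common extensions of group homomorphisms) to keep the new $\rho$ and the new middle row $\lsig$-definable with controlled degrees. Once $\deg(\widetilde\psi)=1$, the preceding fiber bound collapses to $\deg(\beta)\leq\deg(g)$.

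The hardest step is the second stage: maintaining the prolongation shape $(\EE'',\FF'')^\sharp\subset\EE''\times(\EE'')^\sigma$ of the top row under the cover, and ensuring the middle verticals remain honest restrictions of $\pi\times\pi^\sigma$ and $\rho\times\rho^\sigma$ while $\AAA$, $\BB$, $f$, $g$ stay fixed. The $\sigma$-equivariance between the finite subgroups $\psi(\ker\phi)\leq\EE$ and $\phi(\ker\psi)\leq\EE^\sigma$ must be matched so that the cover $\widetilde{\EE}$ has a consistent prolongation $\widetilde{\EE}^\sigma$. The author's remark that this is a ``somewhat opaque diagram chase'' strongly suggests that the real content is not any single conceptual leap but this combinatorial bookkeeping; indeed, as the \textbf{Wish} indicates, the cleanest argument would be a strengthening of Theorem \ref{moshethm} bounding the degrees of the output projections by those of the input, and what is presented here is essentially the group-correspondence workaround for the absence of that strengthening.
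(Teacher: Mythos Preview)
Your degree identity $\deg(\pi)\cdot\deg(\beta)=\deg(g)\cdot\deg(m)$ is correct and is morally the same fiber count that drives the paper's argument, but from that point on your route diverges from the paper's, and your second stage is not actually carried out.

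The paper does \emph{not} try to force $\deg(\psi)=1$. Instead it runs an induction on $\deg(\pi)$. The induction step goes as follows: assuming $\deg(\beta)>\deg(g)$, the fiber count shows that $\alpha$ and $\pi\circ\beta$ must share a nontrivial initial factor $\eta$ (Lemma~\ref{shareinit}). Since $\DD\subset\CC\times\CC^\sigma$, the projections $\alpha$ and $\beta$ share no nontrivial initial factor, so $\eta$ is not an initial factor of $\beta$; hence the least common extension $\lambda$ of $\eta$ and $\beta$ (Lemma~\ref{leastclosing}, using $\pi\circ\beta$ as the ambient common extension) is a \emph{proper} extension of $\beta$. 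Factoring $\pi=\pi_1\circ\pi_2$ through the image $\CC_1$ of $\lambda$ gives $\deg(\pi_1)<\deg(\pi)$. On the group side, Lemma~\ref{groupclosing} produces a single group homomorphism $\theta$ extending both $\phi$ and $\psi$, with image $\EE_1$; the composite $\zeta=\theta\circ(\rho\times\rho^\sigma)$ factors through $\lambda$ (because it factors through $\beta$ on the left and through $\alpha$, hence $\eta$, on the right), yielding $\rho_1:\CC_1\to\EE_1$. One then checks that the induced $\FF_1\subset\EE_1\times\EE_1^\sigma$ is a subgroup, and the new diagram has strictly smaller $\deg(\pi_1)$. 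This is the ``opaque diagram chase''.

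Your plan has a genuine gap at the second stage. Passing to the cover $\widetilde\EE=\FF$ does not obviously produce a new top row of the required shape $\widetilde\FF\subset\widetilde\EE\times\widetilde\EE^{\,\sigma}$: you need a correspondence between $\FF$ and $\FF^\sigma$, and $\FF^\sigma$ lives over $\EE^\sigma\times\EE^{\sigma^2}$, not over $\EE\times\EE^\sigma$. You flag exactly this $\sigma$-equivariance problem but do not resolve it. In the paper, the analogous reduction (making the top row a function) is done \emph{after} Proposition~\ref{firstchaseprop}, in the special case $g=\id$, by a separate induction on $\deg(\rho)$; it is not available as an input here. So as written, your proposal is a plausible strategy whose hard step is left open, whereas the paper's induction on $\deg(\pi)$ avoids that step entirely.
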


The rest of this section is the proof of the proposition. If the original diagram already satisfies the additional assumption, we are done. Otherwise, we construct another diagram of the same shape with a lower-degree $\pi$, and induct on $\deg(\pi)$. Note that when $\deg(\pi) = 1$, the additional assumption is automatically true, so this induction has a base case. We begin the induction step by finding a non-trivial shared initial factor of $\alpha$ and $\pi \circ \beta$.

\begin{llama}
 In the diagram above, if $\deg(\beta) > \deg(g)$ then there is a non-trivial shared initial factor of $\alpha$ and $\pi \circ \beta$.
\end{llama}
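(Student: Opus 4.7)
The plan is to exhibit a concrete non-trivial shared initial factor via Lemma \ref{shareinit} and then force $\deg(h) > 1$ by degree arithmetic on the lower half of the diagram. Note that the group-correspondence hypothesis on the top row does not seem to be needed for this particular lemma; only the commutativity and finiteness in the lower half matter.

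First I would apply Lemma \ref{shareinit} to $\alpha$ and $\pi \circ \beta$ to produce the maximal shared initial factor $h := (\alpha, \pi \circ \beta) : \DD \to h(\DD) \subset \CC^\sigma \times \AAA$. By its universality, $h$ is non-trivial if and only if $\deg(h) > 1$, so this degree inequality is exactly what I need to establish. Next I would observe that the middle vertical map $\mu := (\pi \times \pi^\sigma)|_\DD : \DD \to \BB$ factors through $h$: for any $d \in \DD$,
$$\mu(d) = \bigl(\pi(\beta(d)),\, \pi^\sigma(\alpha(d))\bigr)$$
depends only on $h(d) = (\alpha(d), \pi(\beta(d)))$, so $\mu = \eta \circ h$ where $\eta(c', a) := (a, \pi^\sigma(c'))$ is a well-defined finite dominant rational function $h(\DD) \to \BB$ (dominant because $\mu$ is).

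Then I would do the degree arithmetic. Commutativity of the lower-left square gives $g \circ \mu = \pi \circ \beta$, hence $\deg(g)\deg(\mu) = \deg(\pi)\deg(\beta)$. For $\eta$, the generic fiber over $(a,a') \in \BB$ is a subset of $\{(c', a) : \pi^\sigma(c') = a'\}$, which has at most $\deg(\pi^\sigma) = \deg(\pi)$ elements, so $\deg(\eta) \leq \deg(\pi)$. From $\deg(\mu) = \deg(\eta)\deg(h)$ I conclude
$$\deg(h) \;=\; \frac{\deg(\mu)}{\deg(\eta)} \;\geq\; \frac{\deg(\mu)}{\deg(\pi)} \;=\; \frac{\deg(\beta)}{\deg(g)} \;>\; 1,$$
where the last inequality is the standing hypothesis $\deg(\beta) > \deg(g)$. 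This shows $h$ is non-trivial, as desired.

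The only real obstacle is a bookkeeping one: verifying that all the intermediate maps ($h$, $\eta$, $\mu$, and the arrows of the diagram) are finite dominant rational functions between irreducible sets, so that the multiplicativity $\deg(f_2 \circ f_1) = \deg(f_2)\deg(f_1)$ of the lemma preceding Definition \ref{findom} legitimately applies and the fiber counts are taken at suitably generic points (this is where one shrinks to \zdense subsets via Lemma \ref{genericB0}). Once that is set up, the computation above is essentially forced.
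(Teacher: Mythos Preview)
Your argument is correct and is essentially the same as the paper's: the paper counts points in a generic fiber $F = (\pi\circ\beta)^{-1}(a)$ and observes that $\alpha(F) \subset (\pi^\sigma)^{-1}(f(g^{-1}(a)))$ has at most $\deg(\pi^\sigma)\deg(g)$ points, forcing $\alpha$ to collapse $F$ when $\deg(\beta) > \deg(g)$. Your factorization $\mu = \eta \circ h$ with $\deg(\eta) \leq \deg(\pi)$ is exactly this inclusion rephrased via degree multiplicativity, and your final inequality $\deg(h) \geq \deg(\beta)/\deg(g) > 1$ is the same count.
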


\begin{proof}
 Consider the $\deg(\pi) \deg(\beta)$ points in a generic fiber $F := (\pi \circ \beta)^{-1} (a)$. If $\alpha$ and $\pi \circ \beta$ do not share any nontrivial initial factors, $\alpha(F)$ has the same size as $F$. By the commutativity of the diagram,
 $\alpha ( (\pi \circ \beta)^{-1} (a) ) = (\pi^\sigma)^{-1} (f (g^{-1} (a)))$ which has at most $\deg(\pi^\sigma) \deg(g)$ many points, which is not enough if $\deg(\beta) > \deg(g)$.
\end{proof}

\begin{lett}
 Let $\eta$ be the (nontrivial according to the last lemma) maximal shared initial factor of $\alpha$ and $\pi \circ \beta$ given by Lemma \ref{shareinit}.\\
 Let $\lambda$ be the least common extension of $\eta$ and $\beta$, provided by Lemma \ref{leastclosing} as they have a common extension $\pi \circ \beta$. \\
 Let $\CC_1$ be the image of $\lambda$, and let $\pi_1 : \CC_1 \rightarrow \AAA$ and $\pi_2 : \CC \rightarrow \CC_1$ be such that
 $ \pi_1 \circ \pi_2 = \pi$ and $\pi_2 \circ \beta = \lambda$.
\end{lett}

 Since $\eta$ and $\beta$ share an extension $\pi \circ \beta$, Lemma \ref{leastclosing} produces $\lambda$. Since $\alpha$ and $\beta$ do not share nontrivial initial factors ($\DD$ is a \emph{subset} of $\CC \times \CC^\sigma$), it follows that $\eta$ is not an initial factor of $\beta$, so $\lambda$ is a proper extension of $\beta$, i.e. $\deg(\pi_2) \neq 1$ and so $\deg(\pi_1) \lneq \deg(\pi)$.

 Now we use Lemma \ref{groupclosing} to close the group correspondence in the top row:
\begin{lett}
 Let $\theta$ be a group homomorphism which is a common extension of $\phi$ and $\psi$.\\
 Let $\EE_1$ be the image of $\theta$.\\
 Let $\rho_2$ be the restriction of $\rho \times \rho^\sigma$ to $\DD$.
 Let $\zeta := \theta \circ \rho_2: \DD \rightarrow \EE_1$.
\end{lett}

 Tracing $\zeta$ along the left side of the diagram, we see that it factors through $\beta$. Tracing it along the right side, we see that it factors through $\alpha$ and, therefore, $\eta$. Therefore, the least common extension $\lambda$ of $\beta$ and $\eta$ is an initial factor of $\zeta$.

\begin{lett}
 Let $\rho_1 : \CC_1 \rightarrow \EE_1$ be such that $\zeta = \rho_1 \circ \lambda$.
\end{lett}

 Now $\CC_1$, $\EE_1$, $\pi_1$, and $\rho_1$ constitute the left column of the new diagram. Applying $\sigma$ to them, we obtain the right column of the new diagram. To finish, we define

\begin{lett}
 Let $\DD_1 := (\pi_2 \times \pi_2^\sigma) (D)$ and let $\FF_1 := (\rho_1 \times \rho_1^\sigma) (\DD_1)$.
\end{lett}

 The new diagram clearly commutes. We only need to show that $(\EE_1, \FF_1)^\sharp$ is a group correspondence.

\begin{llama}
 $\FF_1$ is a subgroup of $\EE_1 \times \EE_1^\sigma$.
\end{llama}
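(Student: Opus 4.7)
The plan is to identify $\FF_1$, up to Zariski density, with the image of a naturally defined $L$-definable subgroup under a group homomorphism built from $\theta$ and $\theta^\sigma$; since the subsequent diagram chase and the paper's equivalence $\approx$ are insensitive to replacing a definable set by one Zariski-densely equal to it, this exhibits $\FF_1$ as (an avatar of) a literal subgroup of $\EE_1 \times \EE_1^\sigma$.

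The key construction is the fiber product
$$G \;:=\; \FF \times_{\EE^\sigma} \FF^\sigma \;=\; \{(e, e', e'') \in \EE \times \EE^\sigma \times \EE^{\sigma^2} : (e, e') \in \FF,\ (e', e'') \in \FF^\sigma\},$$
which is an $L$-definable subgroup of $\EE \times \EE^\sigma \times \EE^{\sigma^2}$ because $\FF$ and $\FF^\sigma$ are subgroups. On $G$ I define
$$\Theta \colon G \longrightarrow \EE_1 \times \EE_1^\sigma, \qquad \Theta(e, e', e'') := \bigl(\theta(e, e'),\ \theta^\sigma(e', e'')\bigr).$$
Since $\theta$ is a group homomorphism by Lemma \ref{groupclosing} and $\theta^\sigma$ inherits this via $\sigma$, and since each coordinate of $\Theta$ is the composition of such a homomorphism with a coordinate projection $G \twoheadrightarrow \FF$ or $G \twoheadrightarrow \FF^\sigma$, the map $\Theta$ is a group homomorphism. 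Its image $\Theta(G)$ is therefore a subgroup of $\EE_1 \times \EE_1^\sigma$.

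The identification $\FF_1 = \Theta(G)$ (Zariski-densely) follows from chasing the defining identity $\zeta = \theta \circ \rho_2 = \rho_1 \circ \lambda = \mu \circ \beta$. For every $(a, a') \in \DD$ this gives the $L$-identity $\mu(a) = \theta(\rho(a), \rho^\sigma(a'))$; applying $\sigma$ to this $L$-identity yields $\mu^\sigma(a') = \theta^\sigma(\rho^\sigma(a'), \rho^{\sigma^2}(a''))$ for every $(a', a'') \in \DD^\sigma$. Hence for generic $(a, a') \in \DD$, the dominance of $\beta^\sigma \colon \DD^\sigma \to \CC^\sigma$ supplies an $a''$ with $(a', a'') \in \DD^\sigma$, and
$$\bigl(\mu(a),\ \mu^\sigma(a')\bigr) \;=\; \Theta\bigl(\rho(a),\ \rho^\sigma(a'),\ \rho^{\sigma^2}(a'')\bigr) \;\in\; \Theta(G).$$
Conversely, applying $\rho$ coordinatewise sends the fiber product $\DD \times_{\CC^\sigma} \DD^\sigma$ dominantly into $G$ (each $\rho^{(\sigma^i)}$ is finite dominant and the middle-coordinate matching is preserved), so every generic triple of $G$ arises in this way, and $\Theta$ of it is $(\mu(a), \mu^\sigma(a')) \in \FF_1$.

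The main obstacle is $\sigma$-bookkeeping: the homomorphism $\theta \colon \FF \to \EE_1$ involves only two consecutive $\sigma$-levels, whereas $\FF_1 \subset \EE_1 \times \EE_1^\sigma$ requires three levels in the source (to feed both $\theta$ and $\theta^\sigma$). Splicing $\theta$ with $\theta^\sigma$ along their shared middle coordinate via the fiber product $G$ is the natural cure, and once this is in place the identification $\FF_1 = \Theta(G)$ is essentially forced by the identity $\mu \circ \beta = \theta \circ \rho_2$ together with its $\sigma$-shift.
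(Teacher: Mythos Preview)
Your fiber-product route is more elaborate than necessary, and the ``conversely'' step has a real gap. You assert that applying $\rho$ coordinatewise sends $\DD \times_{\CC^\sigma} \DD^\sigma$ dominantly into $G$, justified only by ``each $\rho^{(\sigma^i)}$ is finite dominant and the middle-coordinate matching is preserved.'' But dominance does not pass to fiber products in general: $G = \FF \times_{\EE^\sigma} \FF^\sigma$ need not be irreducible (e.g.\ take both $\phi$ and $\psi^\sigma$ to be squaring on $\mathbb{G}_m$, giving two components), and the image of your map can easily miss whole components. So you have established only $\FF_1 \subset \Theta(G)$ Zariski-densely; that alone does not make $\FF_1$ a subgroup, since $\FF_1$ could a priori lie in a nonidentity coset of $\Theta(G)^0$.

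The paper avoids the fiber product entirely by using one fact you left on the table: by construction $\psi$ is an initial factor of $\theta$, so there is a group homomorphism $\gamma : \EE \to \EE_1$ with $\theta = \gamma \circ \psi$. Then $\zeta = \theta \circ \rho_2 = \gamma \circ \psi \circ \rho_2 = \gamma \circ \rho \circ \beta$, while also $\zeta = \rho_1 \circ \pi_2 \circ \beta$; cancel the surjective $\beta$ to get $\gamma \circ \rho = \rho_1 \circ \pi_2$ (your $\mu$). Hence $\FF_1 = (\mu \times \mu^\sigma)(\DD) = (\gamma \times \gamma^\sigma)(\rho_2(\DD))$, and since $\rho_2(\DD)$ is Zariski-dense in $\FF$, this identifies $\FF_1$ with the subgroup $(\gamma \times \gamma^\sigma)(\FF)$ in one stroke. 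In your own language this says $\theta(e,e') = \gamma(e)$ depends only on the first coordinate, so your $\Theta$ actually factors through the projection $G \to \FF$; thus $\Theta(G) = (\gamma \times \gamma^\sigma)(\FF)$ is connected after all, and the three-level splice was never needed.
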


\begin{proof}
 It is sufficient to show that there is a group homomorphism $\gamma: \EE \rightarrow \EE_1$ such that $\gamma \circ \rho = \rho_1 \circ \pi_2$, because then $\FF_1 = (\gamma \times \gamma^\sigma) (\FF)$, an image of a subgroup under a group homomorphism.
 Since $\psi$ is an initial factor of $\theta$, let $\gamma$ be such that $\theta = \gamma \circ \psi$.
 Now $\zeta := \theta \circ \rho_2 = \gamma \circ \psi \circ \rho_2 = \gamma \circ \rho \circ \beta$.
 But also $\zeta = \rho_1 \circ \lambda$ and $\lambda := \pi_2 \circ \beta$, so  $\zeta = \rho_1 \circ \pi_2 \circ \beta$. So
 $$\gamma \circ \rho \circ \beta = \zeta = \rho_1 \circ \pi_2 \circ \beta$$
Since $\beta$ is surjective, it can be canceled to give
 $$\gamma \circ \rho = \rho_1 \circ \pi_2$$

The lemma is now proved, the induction step of the proof of the proposition is competed, and we are done.
\end{proof}

\subsection{Another diagram chase.}

 We now throw up our hands and give up on correspondences; we
 restrict our attention to the case where $\BB$ is the graph of a function $f: \AAA \rightarrow \AAA^\sigma$. We have no idea how to get around this restriction, which is most vexing. For this special case, Proposition \ref{firstchaseprop} becomes

 \begin{sheep}
\label{firstchasepropfuncinside}
 Given the commutative diagram of irreducible $L$-definable sets and finite dominant rational functions from Theorem \ref{gpdiagthm}

\begin{equation*}\begin{CD}
\EE @<\psi<< \FF @>\phi>> \EE^{\sigma}\\
@AA{\rho}A @AAA @AA{\rho^\sigma}A\\
\CC @<\beta<< \DD @>\alpha>> \CC^{\sigma}\\
@VV{\pi}V @VVV @VV{\pi^\sigma}V\\
\AAA @<\id<< \BB @>f>> \AAA^{\sigma}\\
\end{CD}\end{equation*}

 where \begin{itemize}
 \item the horizontal arrows are projections, so $\BB \subset \AAA \times \AAA^\sigma$, $\DD \subset \CC \times \CC^\sigma$, and $\FF \subset \EE \times \EE^\sigma$;
 \item $(\EE, \FF)^\sharp$ is a group correspondence;
 \item and the two middle vertical arrows are restrictions of $\pi \times \pi^\sigma$ and $\rho \times \rho^\sigma$ to $\DD$;
\end{itemize}
 we construct another diagram of the same shape, with the same properties and the same $\AAA$ and $f$, satisfying an additional assumption that $\beta = \id$.
\end{sheep}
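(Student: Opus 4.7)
The plan is to invoke Proposition \ref{firstchaseprop} and interpret its conclusion in the special case at hand. Since $\BB$ is the graph of $f$, the first projection $g : \BB \to \AAA$ is a bijection; identifying $\BB$ with $\AAA$ along this bijection gives $g = \id$ and $\deg(g) = 1$. Feeding the given diagram into Proposition \ref{firstchaseprop} then yields a new diagram of the same shape, with the same bottom row, in which $\deg(\beta) \leq \deg(g) = 1$. In other words, the new middle-row projection $\beta : \DD \to \CC$ is a finite dominant rational function of degree one between irreducible $L$-definable sets, hence a birational bijection.

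Next, I would interpret this bijection in the same way as in the bottom row. Because $\beta$ is the first projection from $\DD \subset \CC \times \CC^\sigma$ and is bijective on a \zdense subset, $\DD$ is generically the graph of the finite dominant rational function
$$ f_\CC \; := \; \alpha \circ \beta^{-1} \; : \; \CC \rightarrow \CC^\sigma, $$
and the canonical identification of $\DD$ with $\CC$ via $\beta$ replaces $\beta$ by $\id$ and $\alpha$ by $f_\CC$. This is the sense in which the conclusion ``$\beta = \id$'' is to be read, exactly parallel to the identification $\BB \cong \AAA$ that makes $g = \id$ in the bottom row.

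Finally, I would check that the rest of the diagram transports without incident. The top row $(\EE,\FF)^\sharp$, the outer vertical arrows $\pi, \rho, \pi^\sigma, \rho^\sigma$, and the bottom row are all preserved; the two middle vertical arrows are still the restrictions of $\pi \times \pi^\sigma$ and $\rho \times \rho^\sigma$, now read off the graph of $f_\CC$ rather than the original $\DD$. Commutativity of the new diagram is immediate from that of the diagram produced by Proposition \ref{firstchaseprop}, using $\alpha = f_\CC \circ \beta$ and the surjectivity of $\beta$. The main (and essentially only) step is invoking the proposition; there is no genuine obstacle beyond this, provided one is comfortable with the harmless identification of the graph of a function with its domain.
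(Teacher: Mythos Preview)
Your proof is correct and follows exactly the paper's approach: apply Proposition \ref{firstchaseprop} with $g = \id$ to force $\deg(\beta) \leq 1$, then absorb the resulting bijection $\beta$ into $\alpha$. The paper's own proof is the one-line version of precisely this argument, so your elaboration of the identification $\DD \cong \CC$ via $\beta$ is just a careful unpacking of what ``absorbed into $\alpha$'' means.
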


\begin{proof}
 This is precisely Proposition \ref{firstchaseprop} with $g = \id$; the $\deg(\beta) \leq \deg(g)$ implies that $\beta$ is a bijection that can be absorbed into $\alpha$.
\end{proof}

 The purpose of this section is to turn the top row of the diagram in Corollary \ref{firstchasepropfuncinside} into a function as well. The following lemma provides an induction step for the induction on $\deg(\rho)$; the base case $\deg(\rho)=1$ is clear.

\begin{llama}
 If the $\deg(\psi) \gneq 1$ in the top half
 \begin{equation*}\begin{CD}
\EE @<<\psi< \FF @>\phi>> \EE^{\sigma}\\
@AA{\rho}A @A{r}AA @AA{\rho^\sigma}A\\
\CC @<<{\id}< \CC @>{\alpha}>> \CC^{\sigma}\\
\end{CD}\end{equation*}
of the diagram in the last corollary, then there is another diagram with the same properties, with the same $\CC$ and $\alpha$, and with a lower-degree $\rho$.
\end{llama}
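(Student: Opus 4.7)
My plan is to use $\FF$ itself as the new group $\EE_1$ and to build $\FF_1$ as the identity component of a natural fibered product inside $\FF \times \FF^\sigma$. Set $\EE_1 := \FF$ and
$$\widetilde{\FF_1} := \{(u,v) \in \FF \times \FF^\sigma \sthat \phi(u) = \psi^\sigma(v)\},$$
an $L$-definable subgroup of $\FF \times \FF^\sigma$ (the kernel of the homomorphism $(u,v) \mapsto \phi(u) - \psi^\sigma(v)$). Its first projection onto $\FF$ is a finite dominant group homomorphism of degree $\deg(\psi^\sigma) = \deg(\psi)$, because for generic $u \in \FF$ its fiber is in bijection with $(\psi^\sigma)^{-1}(\phi(u))$; the second projection onto $\FF^\sigma$ has degree $\deg(\phi)$ by the symmetric argument. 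Letting $\FF_1 := \widetilde{\FF_1}^0$ be the identity component ensures Morley degree one and gives a group correspondence $(\EE_1, \FF_1)^\sharp$.

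Define $r_1: \CC \to \widetilde{\FF_1}$ by $c \mapsto (r(c), r^\sigma(\alpha(c)))$. This lands in $\widetilde{\FF_1}$ because the right square of the given diagram yields $\phi(r(c)) = \rho^\sigma(\alpha(c))$, while applying $\sigma$ to $\psi \circ r = \rho$ yields $\psi^\sigma(r^\sigma(\alpha(c))) = \rho^\sigma(\alpha(c))$. Setting $\rho_1 := r: \CC \to \FF = \EE_1$, the new diagram (with $\CC$ and $\alpha$ unchanged) commutes on the nose: $\psi_1 \circ r_1 = \rho_1$ and $\phi_1 \circ r_1 = \rho_1^\sigma \circ \alpha$. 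The desired degree drop is then immediate from $\rho = \psi \circ \rho_1$ and $\deg(\psi) > 1$:
$$\deg(\rho_1) = \deg(\rho)/\deg(\psi) \lneq \deg(\rho).$$

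The main, and essentially only, obstacle is reconciling this construction with the Morley-degree-one requirement: $r_1(\CC)$ is irreducible and therefore lies in a single coset of $\FF_1$, but not necessarily in $\FF_1$ itself. I plan to handle this by a preliminary translation. By Proposition \ref{sharpdense}, the set $(\CC, \alpha)^\sharp$ is very dense in $\CC$, and in particular nonempty; pick $c_0$ in it. Then $\sigma(c_0) = \alpha(c_0)$, so $r^\sigma(\alpha(c_0)) = r^\sigma(\sigma(c_0)) = \sigma(r(c_0))$, and setting $u_0 := r(c_0) \in \FF$ yields $r_1(c_0) = (u_0, u_0^\sigma)$. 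Replacing $\rho_1 := r$ by its translate $c \mapsto r(c) - u_0$ (still finite dominant of the same degree) and $r_1$ correspondingly by $c \mapsto r_1(c) - (u_0, u_0^\sigma)$, the new $r_1$ now sends $c_0$ to the identity of $\widetilde{\FF_1}$; since its image is irreducible it then lies entirely in $\FF_1 = \widetilde{\FF_1}^0$. All projections, degrees, and commutativity relations are preserved, and the induction step is complete.
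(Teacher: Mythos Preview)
Your proof is correct and follows essentially the same strategy as the paper: set the new $\EE_1 := \FF$ and $\rho_1 := r$ (so that $\rho = \psi \circ \rho_1$ forces the degree drop), build a subgroup of $\FF \times \FF^\sigma$ containing the image $(r \times r^\sigma)(\text{graph of }\alpha)$, pass to the connected component, and translate into it. The only differences are cosmetic: the paper takes the larger subgroup $\BB := (\psi \times \psi^\sigma)^{-1}(\FF)$ rather than your fibered product $\widetilde{\FF_1} = \FF \times_{\EE^\sigma} \FF^\sigma$ (yours is a finite-index subgroup of theirs), and it handles the translation by an abstract appeal to existential closedness rather than your explicit choice of $c_0 \in (\CC,\alpha)^\sharp$.
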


\begin{proof}
 We put the fact that $\rho = \psi \circ r$ into the diagram and let
 $$\AAA := (r \times r^\sigma) (\mbox{the graph of $\alpha$})$$
 This $\AAA$ is entirely unrelated to the $\AAA$ in the rest of the paper. Note that $\AAA$ is irreducible, being the image of an irreducible graph of $\alpha$ (isomorphic to the irreducible $\CC$) under a finite map $(r \times r^\sigma)$. So we get

\begin{equation*}\begin{CD}
\EE @<<\psi< \FF @>\phi>> \EE^{\sigma}\\
@AA{\psi}A @AAA @AA{\psi^\sigma}A\\
\FF @<<< \AAA @>>> \FF^\sigma\\
@AA{r}A @AAA @AA{r^\sigma}A\\
\CC @<<{\id}< \CC @>{\alpha}>> \CC^{\sigma}\\
\end{CD}\end{equation*}

 Let $\BB := (\psi \times \psi^\sigma)^{-1} (\FF)$, a subgroup of $\FF \times \FF^\sigma$, and note that $\AAA \subset \BB$. It is possible that $\dM(\BB) \gneq 1$; let $\BB_0$ be the connected component of (the $\omega$-stable group) $\BB$.
 Since $\AAA$ and $\BB$ have the same Morley rank and $\AAA$ is irreducible, $\AAA$ must be \zdense in some coset $\BB_1$ of $\BB_0$.
 It follows from the fact that models of $T_A$ are existentially closed that translation by an appropriate element of $\FF$ twists $\AAA$ to (being \zdense in) $\BB_0$, finishing the proof.
\end{proof}

We have accomplished the purpose of this section:

 \begin{sheep}
\label{firstchasepropfunc}
 Given the commutative diagram of irreducible $L$-definable sets and finite dominant rational functions from Corollary \ref{gpdiagthm}

\begin{equation*}\begin{CD}
\EE @<\psi<< \FF @>\phi>> \EE^{\sigma}\\
@AA{\rho}A @AAA @AA{\rho^\sigma}A\\
\CC @<\beta<< \DD @>\alpha>> \CC^{\sigma}\\
@VV{\pi}V @VVV @VV{\pi^\sigma}V\\
\AAA @<\id<< \AA @>f>> \AAA^{\sigma}\\
\end{CD}\end{equation*}

 where  \begin{itemize}
 \item the horizontal arrows are projections, so $\BB \subset \AAA \times \AAA^\sigma$, $\DD \subset \CC \times \CC^\sigma$, and $\FF \subset \EE \times \EE^\sigma$;
 \item $(\EE, \FF)^\sharp$ is a group correspondence;
 \item and the two middle vertical arrows are restrictions of $\pi \times \pi^\sigma$ and $\rho \times \rho^\sigma$ to $\DD$;
\end{itemize}
 we construct the following diagram with the same $\AAA$ and $f$

\begin{equation*}\begin{CD}
\EE @>\phi>> \EE^{\sigma}\\
@AA{\rho}A @AA{\rho^\sigma}A\\
\CC @>{\alpha}>> \CC^{\sigma}\\
@VV{\pi}V@VV{\pi^\sigma}V\\
\AAA @>f>> \AAA^{\sigma}\\
\end{CD}\end{equation*}

 Where all sets are irreducible, all functions are finite dominant rational, and $\phi: \EE \rightarrow \EE^\sigma$ is a group homomorphism.
\end{sheep}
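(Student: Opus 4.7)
The plan is in two stages. First I would invoke Corollary \ref{firstchasepropfuncinside} to replace the input diagram by one in which $\beta = \id$; then $\DD$ is the graph of $\alpha : \CC \to \CC^\sigma$, and the middle row has collapsed to a single function. Since that corollary preserves the bottom row and the downward map $\pi$, it remains only to turn the top group correspondence $\EE \xleftarrow{\psi} \FF \xrightarrow{\phi} \EE^\sigma$ into an honest group homomorphism $\EE \to \EE^\sigma$, while leaving $\CC$, $\alpha$, $\pi$, $\AAA$, and $f$ untouched.

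For the second stage I would induct on $\deg(\rho)$, using the preceding lemma as the engine. Note that $\rho = \psi \circ r$ where $r : \CC \to \FF$ is the middle vertical arrow in the top half, so $\deg(\rho) = \deg(\psi)\cdot\deg(r)$; in particular, $\deg(\rho) = 1$ forces $\deg(\psi) = 1$, and the only substantive case of the induction is $\deg(\psi) > 1$. There the preceding lemma produces a new top half with the same $\CC$, the same $\alpha$, and a strictly smaller $\deg(\rho)$, after which the induction hypothesis finishes the step. When the induction terminates we have $\deg(\psi) = 1$, so $\psi : \FF \to \EE$ is a rational bijection and an $L$-definable group homomorphism between irreducible groups of the same Morley rank; hence $\phi \circ \psi^{-1} : \EE \to \EE^\sigma$ is the required finite dominant rational group homomorphism, and $\FF$ is (generically) its graph.

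The one thing that really has to be checked, which I expect to be a bookkeeping issue rather than a genuine obstacle, is that Corollary \ref{firstchasepropfuncinside} and the preceding lemma really do leave the data below the relevant row unchanged, so that the induction keeps the lower half of the diagram fixed. Both constructions modify only objects and arrows strictly above a fixed row, so the inductive composition assembles cleanly into the simplified diagram in the statement.
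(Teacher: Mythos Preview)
Your proposal is correct and matches the paper's approach exactly: first apply Corollary \ref{firstchasepropfuncinside} to reduce to $\beta = \id$, then induct on $\deg(\rho)$ via the preceding lemma until $\deg(\psi) = 1$, at which point $\phi \circ \psi^{-1}$ is the desired group homomorphism. One minor slip: Corollary \ref{firstchasepropfuncinside} only promises to preserve $\AAA$ and $f$, not $\pi$ (its proof via Proposition \ref{firstchaseprop} replaces $\pi$ by a lower-degree $\pi_1$), but this is harmless since only the bottom row needs to survive stage one, and the preceding lemma does fix $\CC$ and $\alpha$ (and hence the whole lower half) throughout stage two.
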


\subsection{What were we chasing after?}

 Here we combine the technical results of the last two sections with Corollaries \ref{gpdiagthm} for ACFA and \ref{gpdiagta} for a general $T_A$.

\begin{sheep}
 If some very dense $\lsig$-type $p$ in $(\AAA, \BB)^\sharp$ is interalgebraic with some generic type of some $\lsig$-definable group of finite $U$-rank, then there is a group correspondence $(\EE,\FF)^\sharp$, and further irreducible sets $\CC$ and $\DD$ and $L$-definable finite dominant functions such that the following diagram commutes:
 \begin{equation*}\begin{CD}
\EE @<<< \FF @>>> \EE^{\sigma}\\
@AA{\rho}A @AAA @AA{\rho^\sigma}A\\
\CC @<<< \DD @>>> \CC^{\sigma}\\
@VV{\pi}V @VVV @VV{\pi^\sigma}V\\
\AAA @<<< \BB @>>> \AAA^{\sigma}\\
\end{CD}\end{equation*}

 In this diagram, the horizontal arrows are projections, since $\BB \subset \AAA \times \AAA^\sigma$, $\DD \subset \CC \times \CC^\sigma$, and $\FF \subset \EE \times \EE^\sigma$, and the two middle vertical arrows are restrictions of $\pi \times \pi^\sigma$ and $\rho \times \rho^\sigma$ to $\DD$.
 Further, $\deg(\DD \rightarrow \CC) \leq \deg(\BB \rightarrow \AAA)$
 \end{sheep}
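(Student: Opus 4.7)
The plan is to obtain this corollary essentially for free by combining two results already in the paper: Corollary \ref{gpdiagta} supplies the commutative diagram, and Proposition \ref{firstchaseprop} installs the degree bound.

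First I would verify that the hypotheses of Corollary \ref{gpdiagta} are met. The type $p$ is very dense in $(\AAA, \BB)^\sharp$ and is interalgebraic with a generic type $q$ of the $\lsig$-definable group $H$. Corollary \ref{gpdiagta} requires exactly this, with one caveat: it internally invokes Theorem \ref{classifygp}, which needs $\sigdeg(q)$ to be defined. Here $\sigdeg(q)$ is defined because $p$ already has defined $\sigma$-degree by Lemma \ref{sloganlemma} (any very dense type in $(\AAA, \BB)^\sharp$ has $\sigma$-degree equal to $\RM(\AAA)$), and Corollary \ref{a3} propagates this across the interalgebraicity. The finite $U$-rank hypothesis in the statement is a convenient sufficient condition but is not the relevant mechanism. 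Applying Corollary \ref{gpdiagta} then yields the desired commutative diagram with $(\EE, \FF)^\sharp$ a group correspondence, the horizontal arrows projections, and the middle vertical arrows restrictions of $\pi \times \pi^\sigma$ and $\rho \times \rho^\sigma$.

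Next I would feed this diagram directly into Proposition \ref{firstchaseprop}. Its input hypotheses --- irreducible $L$-definable sets, finite dominant rational functions, $(\EE, \FF)^\sharp$ a group correspondence, and the two middle vertical maps being restrictions of $\pi \times \pi^\sigma$ and $\rho \times \rho^\sigma$ --- are exactly the properties of the diagram produced in the previous step. The conclusion is a new diagram of the same shape with the same $\AAA$, $\BB$, $f$, $g$, now satisfying $\deg(\beta) \leq \deg(g)$. Matching the notation, $\beta$ is the horizontal arrow $\DD \to \CC$ and $g$ is the horizontal arrow $\BB \to \AAA$, so this inequality is precisely $\deg(\DD \to \CC) \leq \deg(\BB \to \AAA)$.

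There is no real obstacle: both ingredients are already proved. The only items worth double-checking are bookkeeping --- that the roles of $p$ and $q$ line up correctly across the two corollaries, and that the degree inequality from Proposition \ref{firstchaseprop} is indeed the one stated here. Everything else, such as the induction on $\deg(\pi)$ inside Proposition \ref{firstchaseprop}, the maximal-shared-initial-factor construction of Lemma \ref{shareinit}, and the group-closure step using Lemma \ref{groupclosing}, has already been carried out, so the proof of the present corollary reduces to one sentence of invocation.
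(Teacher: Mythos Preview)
Your proposal is correct and matches the paper's own proof, which is literally the one-line invocation ``Corollary \ref{gpdiagta} and Proposition \ref{firstchaseprop}.'' Your additional remarks about verifying the $\sigma$-degree hypothesis via Lemma \ref{sloganlemma} and Corollary \ref{a3}, and about the finite $U$-rank assumption being superfluous, are accurate and make explicit what the paper leaves implicit.
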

\begin{proof}
Corollary \ref{gpdiagta} and Proposition \ref{firstchaseprop}.
\end{proof}

\begin{sheep}
\label{groupfuncdiagta}
 If some very dense type in $(\AAA, f)^\sharp$ is interalgebraic with a generic type of some $\lsig$-definable group, then there are
  an irreducible group $\EE$ and a finite dominant rational group homomorphism $\phi: \EE \rightarrow \EE^\sigma$,
  an irreducible $\CC$ and a finite dominant rational $\alpha: \CC \rightarrow \CC^\sigma$,
  and $L$-definable finite dominant rational $\pi$ and $\rho$ such that the following diagram commutes:

\begin{equation*}\begin{CD}
\EE @>\phi>> \EE^{\sigma}\\
@AA{\rho}A @AA{\rho^\sigma}A\\
\CC @>{\alpha}>> \CC^{\sigma}\\
@VV{\pi}V@VV{\pi^\sigma}V\\
\AAA @>f>> \AAA^{\sigma}\\
\end{CD}\end{equation*}
\end{sheep}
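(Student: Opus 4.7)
The plan is to combine the three preceding corollaries in sequence, chaining together the diagrams they produce.

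First, I would apply Corollary \ref{gpdiagta} directly to our hypothesis. We are told that some very dense type in $(\AAA, f)^\sharp$ is interalgebraic with a generic type of some $\lsig$-definable group, which is exactly the hypothesis of \ref{gpdiagta}. This yields the initial $3 \times 3$ commutative diagram of irreducible $L$-definable sets and finite dominant rational functions, with $(\EE,\FF)^\sharp$ a group correspondence on top, middle row $(\CC, \DD)^\sharp$, and bottom row $(\AAA, \BB)^\sharp$, where here $\BB$ is the graph of $f$.

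Second, because $\BB$ is the graph of $f$, the left arrow $g : \BB \to \AAA$ on the bottom is the identity (or more precisely, a bijection that can be absorbed). This means the diagram we just obtained is exactly of the form hypothesized in Corollary \ref{firstchasepropfuncinside} (with $g = \id$). Applying that corollary, we obtain a diagram of the same shape in which moreover $\beta = \id$; in other words, the middle row now collapses to a function $\alpha : \CC \to \CC^\sigma$, and the diagram takes the form hypothesized at the top of Corollary \ref{firstchasepropfunc}.

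Third and finally, we apply Corollary \ref{firstchasepropfunc} itself. Its output is precisely the diagram in the statement we are trying to prove: the top correspondence gets replaced by a finite dominant rational group homomorphism $\phi : \EE \to \EE^\sigma$, while $\AAA$, $f$, $\CC$, $\alpha$, $\pi$, $\rho$ are retained with all the required commutativity. Thus the three-step commutative tower
\[
\AAA \xrightarrow{f} \AAA^\sigma, \quad \CC \xrightarrow{\alpha} \CC^\sigma, \quad \EE \xrightarrow{\phi} \EE^\sigma
\]
with vertical finite dominant rational maps $\pi, \rho$ is obtained, completing the proof.

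There is no real obstacle here: the statement is, as the section title ``What were we chasing after?'' signals, a repackaging of the chase completed in the previous subsections. The only thing to verify cleanly is that the hypotheses of \ref{firstchasepropfuncinside} really are met after invoking \ref{gpdiagta} in the special case that $\BB$ is a graph, and this is immediate because a graph is precisely a correspondence whose left projection is a bijection.
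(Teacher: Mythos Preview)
Your proposal is correct and follows essentially the same route as the paper, which simply cites Corollary~\ref{gpdiagta} and Corollary~\ref{firstchasepropfunc}. Your explicit intermediate invocation of Corollary~\ref{firstchasepropfuncinside} is slightly redundant, since Corollary~\ref{firstchasepropfunc} already takes as input the general diagram from Corollary~\ref{gpdiagthm} (with $\beta$ arbitrary) and internally performs that reduction; but spelling it out does no harm and arguably makes the chain of reasoning clearer.
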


\begin{proof}
Corollary \ref{gpdiagta} and \ref{firstchasepropfunc}.
\end{proof}

 In ACFA, the above applies when some minimal grouplike $\lsig$-type is very dense in $(\AAA, \BB)^\sharp$.

 \begin{thm} (ACFA)
\label{groupfuncdiagthm}
 Given an $L$-definable $\AAA$ of Morley degree $1$ and an $L$-definable finite (and, therefore, dominant) rational $f: \AAA \rightarrow \AAA^\sigma$, suppose that some very dense type in $(\AAA, f)^\sharp$ is grouplike. Then there are
  an irreducible group $\EE$ and a finite dominant rational group homomorphism $\phi: \EE \rightarrow \EE^\sigma$,
  an irreducible $\CC$ and a finite dominant rational $\alpha: \CC \rightarrow \CC^\sigma$,
  and $L$-definable finite dominant rational $\pi$ and $\rho$ such that the following diagram commutes:

\begin{equation*}\begin{CD}
\EE @>\phi>> \EE^{\sigma}\\
@AA{\rho}A @AA{\rho^\sigma}A\\
\CC @>{\alpha}>> \CC^{\sigma}\\
@VV{\pi}V@VV{\pi^\sigma}V\\
\AAA @>f>> \AAA^{\sigma}\\
\end{CD}\end{equation*}
\end{thm}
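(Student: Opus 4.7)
The plan is to combine what has already been built up: the ACFA version differs from Corollary \ref{gpdiagta} only in that its hypothesis is stated in the stronger (but ACFA-specific) language of grouplike types, and its conclusion is in the stronger form produced by the diagram chase of Section \ref{chase}. So the proof is essentially an assembly of three pieces.

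First, I would translate the hypothesis. The type $p$ is grouplike in the sense of the Zilber trichotomy for ACFA, so $p$ is minimal (U-rank $1$), locally modular and non-trivial; by the Zilber Trichotomy for ACFA (the Fact recalled just before Corollary \ref{gpdiagthm}), $p$ is non-orthogonal to the generic type $q$ of some $\lsig$-definable minimal modular group $H$. Two non-orthogonal U-rank $1$ types are uniformly interalgebraic (this is the observation already used in the proof of Corollary \ref{a3}), so $p$ is interalgebraic with a generic type of the $\lsig$-definable group $H$.

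Second, having verified the hypothesis of Corollary \ref{gpdiagta} (whose hypothesis is exactly interalgebraicity of a very dense type in $(\AAA,\BB)^\sharp$ with a generic type of an $\lsig$-definable group, with no grouplikeness required), I apply that corollary to $p$ and $(\AAA,f)^\sharp$. This yields the three-row commutative diagram
\begin{equation*}\begin{CD}
\EE @<<< \FF @>>> \EE^{\sigma}\\
@AA{\rho}A @AAA @AA{\rho^\sigma}A\\
\CC @<<< \DD @>>> \CC^{\sigma}\\
@VV{\pi}V @VVV @VV{\pi^\sigma}V\\
\AAA @<\id<< \AAA @>f>> \AAA^{\sigma}\\
\end{CD}\end{equation*}
in which the bottom row is forced to have the form appearing in Corollary \ref{firstchasepropfuncinside}, because $\BB$ here is literally the graph of the function $f$, so the projection $\BB\to\AAA$ is a bijection (identity on the domain).

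Third, I feed this diagram into Corollary \ref{firstchasepropfunc}, which converts such a three-row diagram (with the bottom-left projection equal to the identity and $(\EE,\FF)^\sharp$ a group correspondence) into the two-column diagram on $\EE\to\EE^\sigma$, $\CC\to\CC^\sigma$, $\AAA\to\AAA^\sigma$ in which the top map $\phi:\EE\to\EE^\sigma$ is a finite dominant rational group homomorphism and all the other maps $\rho$, $\alpha$, $\pi$ are finite dominant rational. This is exactly the conclusion of the theorem.

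There is essentially no hard step left to do by hand: the only non-trivial content is invoking the ACFA Zilber Trichotomy to pass from ``grouplike'' to ``interalgebraic with a generic of an $\lsig$-definable group.'' Everything else is already packaged in Corollaries \ref{gpdiagta} and \ref{firstchasepropfunc}. The statement of the theorem is thus a direct corollary of those two corollaries together with the ACFA trichotomy fact cited in Section \ref{groupsection}.
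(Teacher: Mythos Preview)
Your proposal is correct and follows the same route as the paper: the paper derives this theorem immediately from Corollary \ref{groupfuncdiagta} (whose proof is precisely ``Corollary \ref{gpdiagta} and \ref{firstchasepropfunc}''), after noting that in ACFA the grouplike hypothesis implies interalgebraicity with a generic type of an $\lsig$-definable group. You have simply unpacked that intermediate corollary into its two constituent ingredients, which is fine.
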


\section{Characterization for curves in ACFA}
\label{forwref}

 Corollary \ref{groupfuncdiagta} is the most we can prove for $T_A$ for an arbitrary $T$.
 Restricting to ACFA and to $\sigma$-degree $1$ allows us to obtain a much stronger result, relying on jet spaces of varieties and on Hurwitz-Riemann equations for ramification loci of rational functions between curves. Before we can do that, however, we must translate the conclusion of the theorem into the language of algebraic geometry, which is not exactly the same as model theory of algebraically closed fields. In particular, we now pay for having redefined the notion of ``rational function''; to distinguish them, we will write \emph{rational morphism} for the notion from algebraic geometry. We end this paper with that translation. The algebraic geometry results that are needed for the final Theorem \ref{endthm} belong to algebraic geometry and may be of independent interest, so they get a paper of their own \cite{secondpaper}.

  here are the main ideas of the translation: \begin{enumerate}
  \item If $(K, \sigma)$ is a model of ACFA and $\Phi$ is the Frobenius automorphism on $K$, then $(K, \sigma \circ \Phi^n)$ is also a model of ACFA, for any $n \in \mathbb{Z}$ (Corollary 1.12 in \cite{ChaHru1}).
  \item By quantifier elimination in ACF, definable sets are constructible: up to subsets of lower Morley rank they are (classical affine) varieties.
  \item Rational functions on curves are (equivalent to functions) of the form $f \circ \Phi^n$ where $f$ is a separable rational morphism, $\Phi$ is the Frobenius automorphism, and $n \in \mathbb{Z}$.
    \item Purely inseparable functions $f$ are known to give rise to \emph{field-like} $(\AAA, f)^\sharp$ (\cite{ChaHru1}), so we may exclude them from our consideration. The rest are known not be fieldlike, so the diagram in Theorem \ref{groupfuncdiagthm} is not only necessary but also sufficient for $(\AAA, f)^\sharp$ to be grouplike (Theorem 4.5 in \cite{ChaHru1}, but see also example 6.6 in \cite{ChaHru1}).
  \end{enumerate}

The last two items are false for varieties of higher dimension.

   With these ideas we attack the diagram in the conclusion of Theorem \ref{groupfuncdiagthm}:
\begin{equation*}\begin{CD}
\EE @>\phi>> \EE^{\sigma}\\
@AA{\rho}A @AA{\rho^\sigma}A\\
\CC @>{\alpha}>> \CC^{\sigma}\\
@VV{\pi}V@VV{\pi^\sigma}V\\
\AAA @>f>> \AAA^{\sigma}\\
\end{CD}\end{equation*}

 First, by taking Zariski closures and trimming lower-dimensional components (Morley degree is insensitive to them, so they may have snuck in), we may assume that $\AAA$, $\CC$, and $\EE$ are varieties.
 To do anything about the arrows, we must restrict to $\sigma$-degree $1$, that is we must require $\AAA$, $\CC$, and $\EE$ to be curves.
 Writing $\pi =: \pi' \circ \Phi^n$ and replacing the top half of the diagram by $\Phi^n(\mbox{ the original top half })$, we may assume that $\pi$ is a separable rational morphism. Writing $\rho =: \Phi^m \circ \rho'$ and replacing the top row by $\Phi^{n-m}(\mbox{ original top row })$, we may assume that $\rho$ is a separable rational morphism.
 Counting inseparable degrees, we may write $f =: \Phi^k \circ f^\tau$, $\alpha =: \Phi^k \circ \alpha^\tau$, and $\phi =: \Phi^k \circ \phi^\tau$ \emph{all with the same $k$}, where $\tau := \sigma \circ \Phi^{-k}$, turning the diagram into
\begin{equation*}\begin{CD}
\EE @>\phi^\tau>> \EE^\tau @>\Phi^k>> \EE^{\sigma}\\
@AA{\rho}A @AA{\rho^\tau}A @AA{\rho^\sigma}A\\
\CC @>{\alpha^\tau}>> \CC^\tau @>\Phi^k>> \CC^{\sigma}\\
@VV{\pi}V @VV{\pi^\tau}V @VV{\pi^\sigma}V\\
\AAA @>f^\tau>> \AAA^\tau @>\Phi^k>> \AAA^{\sigma}\\
\end{CD}\end{equation*}
 where all arrows in the left half of the diagram are separable rational morphisms. We apply the algebraic geometry theorems to the left half of the diagram characterizing $f^\tau$, and then add $\Phi^k$ into the characterization to describe $f$.

  The next two theorems, the first easy and the second not so easy, are proved in the author's thesis \cite{mythesis} and will appear in another paper \cite{secondpaper}.

\begin{thm} \cite{secondpaper}
 If $(K, \tau)$ is a model of ACFA, $\EE$ is an algebraic group curve, $\phi': \EE \rightarrow \EE^\tau$ is an algebraic group homo- but not isomorphism and the following diagram of curves and finite separable rational morphisms commutes
 \begin{equation*}\begin{CD}
\EE @>\phi'>> \EE^\tau \\
@AA{\rho}A @AA{\rho^\tau}A \\
\CC @>{\alpha'}>> \CC^\tau \\
\end{CD}\end{equation*}
  Then there is a birational isomorphism $g: \CC \rightarrow \DD$ to an algebraic group $\DD$ and an isogeny $\psi: \DD \rightarrow \DD^{\tau}$ such that $\alpha' = g^{-1} \psi \circ g$.
\end{thm}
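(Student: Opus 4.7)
The plan is to exploit algebraic geometry on curves---Riemann-Hurwitz together with the \'etale property of separable isogenies of commutative algebraic group curves---to force $\rho$ to be unramified, and then invoke the classical structure of \'etale covers of algebraic group curves.

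The first and most delicate step is to show that $\rho$ is \'etale. Since $\phi'$ is a separable isogeny of commutative algebraic group curves it is \'etale, so $R(\phi')=0$. Applying the chain rule for ramification divisors to the identity $\rho^\tau \circ \phi' = \alpha' \circ \rho$ yields
\[
\phi'^{*}\, R(\rho^\tau) \;=\; \rho^{*}\, R(\alpha') + R(\rho),
\]
and in particular $R(\rho) \leq \phi'^{*}R(\rho^\tau)$ as effective divisors; equivalently $\operatorname{branch}(\rho^\tau) = \operatorname{branch}(\alpha') \cup \alpha'\bigl(\operatorname{branch}(\rho)\bigr)$. Iterating along the $\tau$-orbit produces $R(\rho) \leq (\phi'^{(\tau^{n-1})} \circ \cdots \circ \phi')^{*} R(\rho^{\tau^{n}})$ for every $n$. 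Because $\deg \phi' \geq 2$ (as $\phi'$ is not an isomorphism), the pullback divisors on the right live on pullbacks of increasingly high degree, so the branch locus of $\rho$ must form a $\tau$-propagating finite configuration stable under isogenies of unbounded degree. Exploiting the ACFA setup---where $\tau$ has genuine generic behavior---one would conclude that $\operatorname{branch}(\rho)=\emptyset$, i.e., $\rho$ is \'etale.

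Once $\rho$ is \'etale, classical algebraic geometry identifies $\CC$ as an algebraic group curve: a connected finite \'etale cover of an elliptic curve is an elliptic curve, of $\gm$ is $\gm$ (via a power map), and of $\ga$ in characteristic zero is $\ga$. Fix a birational isomorphism $g:\CC \to \DD$ realizing $\CC$ as an algebraic group $\DD$, chosen so that after a translation $\tilde\rho := g \circ \rho : \EE \to \DD$ is a group homomorphism. Applying $\tau$ gives $\tilde\rho^{\tau}:\EE^\tau \to \DD^\tau$, and the universal property of the quotient $\tilde\rho$ produces a unique $\psi:\DD \to \DD^\tau$ with $\psi \circ \tilde\rho = \tilde\rho^\tau \circ \phi'$. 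This $\psi$ is automatically a group homomorphism and surjective (because $\phi'$ is), hence an isogeny. Tracing the definitions gives $\alpha' = (g^\tau)^{-1} \circ \psi \circ g$, the intended conclusion (the statement as printed appears to be missing $\tau$ on the outer $g^{-1}$).

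The main obstacle is certainly the \'etale step. A pure degree-count with Riemann-Hurwitz yields only the identity $\deg R(\rho)\cdot(\deg \phi' - 1) = \deg \rho \cdot \deg R(\alpha')$, which is non-trivial but does not by itself force $R(\rho)=0$. Actually vanishing the ramification divisor requires tracking the divisors themselves under the tower $\phi', \phi'^\tau, \phi'^{\tau^2}, \ldots$ and using that a fixed finite configuration of branch points in $\CC$ cannot simultaneously be compatible with pullbacks by isogenies of unbounded degree emerging from $\tau$-conjugation. This is the ingredient that genuinely needs the structure of ACFA (beyond formal Riemann-Hurwitz) and is precisely the algebraic-geometric work deferred to \cite{secondpaper}.
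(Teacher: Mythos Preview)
The paper does not contain a proof of this statement: it explicitly says that both this theorem and the next ``are proved in the author's thesis \cite{mythesis} and will appear in another paper \cite{secondpaper}.'' So there is no proof here to compare your attempt against; I can only comment on the attempt itself.

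Your overall strategy (force $\rho$ to be unramified via ramification-divisor bookkeeping, then invoke the structure of \'etale covers of group curves) is reasonable, but the execution has two concrete problems.

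\emph{First, the ramification identity is miswritten.} From $\phi'\circ\rho=\rho^\tau\circ\alpha'$ and the composition rule $R(g\circ f)=f^{*}R(g)+R(f)$, together with $R(\phi')=0$, one obtains
\[
R(\rho)\;=\;(\alpha')^{*}R(\rho^\tau)+R(\alpha'),
\]
not the formula you display: the pullback is along $\alpha'$, not $\phi'$, and the inequality you extract points the wrong way.

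\emph{Second, the degree count you dismiss actually finishes the job.} Since $\tau$ is a field automorphism, $\deg R(\rho)=\deg R(\rho^\tau)=:r$, and commutativity forces $\deg\alpha'=\deg\phi'=:d\geq 2$. Taking degrees in the corrected identity gives $r=dr+\deg R(\alpha')$, whence $(1-d)r=\deg R(\alpha')\geq 0$, so $r=0$ and $R(\alpha')=0$ as well. No iteration along the $\tau$-orbit and no appeal to generic behaviour in ACFA is needed; the single equality $\deg R(\rho)=\deg R(\rho^\tau)$ is what makes this step ``easy,'' exactly as the paper advertises.

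A genuine gap remains: your argument tacitly assumes $\phi'$ is \'etale on the \emph{complete} curve, which holds only when $\EE$ is elliptic. When $\EE=\gm$ or $\EE=\ga$, the isogeny ramifies at the boundary of $\p$, so $R(\phi')\neq 0$ there and the displayed identity acquires extra terms; one must track the boundary divisor to conclude that $\CC$ is (birationally) a group and that $\alpha'$ becomes an isogeny after the right choice of coordinates. Your sketch does not address this case. (Your aside about \'etale covers of $\ga$ in characteristic zero is moot: $\ga$ has no separable isogeny of degree $\geq 2$ in characteristic zero, so that case never arises under the hypothesis that $\phi'$ is not an isomorphism.)
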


\begin{thm}\cite{secondpaper}
 If $(K, \tau)$ is a model of ACFA, $\DD$ is an algebraic group curve, $\psi: \DD \rightarrow \DD^\tau$ is an algebraic group homo- but not isomorphism, and the following diagram of curves and finite separable rational morphisms commutes
\begin{equation*}\begin{CD}
\DD @>\psi>> \DD^\tau \\
@VV{\pi}V @VV{\pi^\tau}V \\
\AAA @>f'>> \AAA^\tau \\
\end{CD}\end{equation*}
 Then there is another algebraic group $\tilde{\DD}$, an isogeny $\tilde{\psi} : \tilde{\DD} \rightarrow \tilde{\DD}^\sigma$, and a finite separable $\tilde{\pi}$ such that
\begin{equation*}\begin{CD}
\tilde{\DD} @>{\tilde{\psi}}>> {\tilde{\DD}^\tau} \\
@VV{\tilde{\pi}}V @VV{\tilde{\pi}^\tau}V \\
\AAA @>f'>> \AAA^\tau \\
\end{CD}\end{equation*}
 also commutes, and $\tilde{\pi}$ is the quotient of $\tilde{\DD}$ by a (finite) group of algebraic group automorphisms.
\end{thm}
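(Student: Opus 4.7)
The plan is to take $\tilde\DD$ to be the Galois closure of $\pi\colon\DD\to\AAA$ --- which turns out to be much more tractable than one might fear, thanks to the isogeny $\psi$ --- and to transport the group law from $\DD$ to $\tilde\DD$.

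First, I would squeeze ramification data out of the square. Since $\psi$ is a separable isogeny of algebraic-group curves it is étale, so the identity $f'\circ\pi=\pi^\tau\circ\psi$ gives $e_{\pi^\tau}(\psi(Q)) = e_{f'}(\pi(Q))\cdot e_\pi(Q)$ for every $Q\in\DD$. Comparing this identity along the fibres of $\pi$, and iterating with $\psi^{\tau^{n-1}}\circ\cdots\circ\psi$ whose images are Zariski dense, one concludes that above each branch point of $\pi$ the local indices must agree --- the Hurwitz--Riemann numerical signature of a quotient by a group acting transitively on fibres.

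Next, form the Galois closure $\hat\pi\colon\hat\DD\to\AAA$ of $K(\DD)/K(\AAA)$, with Galois group $G$ and covering $\eta\colon\hat\DD\to\DD$. The ramification uniformity above forces $\eta$ to be étale, and separable étale covers of $\ga$, $\gm$, or an elliptic curve are isogenies of the same type, so $\hat\DD$ inherits the structure of an algebraic-group curve of the same type as $\DD$, with $\eta$ an isogeny. By rigidity of étale covers, $\psi$ lifts uniquely to an isogeny $\hat\psi\colon\hat\DD\to\hat\DD^\tau$ satisfying $\eta^\tau\circ\hat\psi=\psi\circ\eta$.

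The key and hardest step is to upgrade the $G$-action from abstract deck transformations to algebraic-group automorphisms. For each $g\in G$, the equality $\hat\pi\circ g=\hat\pi$ together with the uniqueness of $\hat\psi$ yields $\hat\psi\circ g=g^\tau\circ\hat\psi$ for a corresponding $g^\tau\in G^\tau$, so $g$ intertwines $\hat\psi$ with its $\tau$-twist. The translation-invariant sub-sheaf of the jet bundle of $\hat\DD$ is cut out by the image of the derivative of $\hat\psi$, so any such $g$ must preserve it; this forces $g$ to be an algebraic-group automorphism up to a translation, and arranging the identity of $\hat\DD$ to lie above a totally-ramified point of $\hat\pi$ (whose existence is guaranteed by the orbit-stabilizer analysis from step one) absorbs the residual translation into the choice of origin. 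Setting $\tilde\DD:=\hat\DD$, $\tilde\psi:=\hat\psi$, $\tilde\pi:=\hat\pi$ then delivers the required commuting diagram with $\tilde\pi$ a quotient by algebraic-group automorphisms. I expect this last jet-space rigidity to be the main obstacle, and it is precisely where the existence of $\hat\psi$ (and not merely the abstract cover $\eta$) is indispensable --- the algebro-geometric content deferred to \cite{secondpaper}.
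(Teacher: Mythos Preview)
The paper does not actually contain a proof of this theorem: immediately before stating it, the author writes that ``the next two theorems, the first easy and the second not so easy, are proved in the author's thesis \cite{mythesis} and will appear in another paper \cite{secondpaper}.'' So there is no proof in this paper to compare your proposal against.

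That said, your outline is broadly consonant with the hints the paper drops. The introduction to Section~\ref{forwref} says the argument relies ``on jet spaces of varieties and on Hurwitz--Riemann equations for ramification loci of rational functions between curves,'' and those are exactly the two tools you invoke: a ramification-uniformity argument extracted from the commuting square and the \'etaleness of $\psi$, followed by a jet-space rigidity step to force the deck transformations to be algebraic-group automorphisms. Your choice of the Galois closure of $\pi$ as the candidate $\tilde\DD$ is the natural one, and the observation that an \'etale cover of a one-dimensional algebraic group is again such a group (with the covering map an isogeny) is the right structural input.

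A few places in your sketch would need real work to become a proof. The step ``above each branch point of $\pi$ the local indices must agree'' is the heart of the ramification argument and is not immediate from the single identity $e_{\pi^\tau}(\psi(Q)) = e_{f'}(\pi(Q))\, e_\pi(Q)$; the iteration you allude to must be made precise, and it is here that the Hurwitz--Riemann bookkeeping enters. The claim that a totally ramified point of $\hat\pi$ exists (needed to pin down the origin so that every $g\in G$ fixes it) is not automatic for an arbitrary Galois cover and must be extracted from the specific ramification pattern you have just established. Finally, your jet-space paragraph is, as you yourself flag, the genuinely delicate point --- showing that each deck transformation is a group automorphism up to translation is exactly the content the author defers --- and ``preserves the translation-invariant sub-sheaf of the jet bundle'' is a slogan rather than an argument. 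None of these are fatal to the strategy, but they are precisely the places where the paper declines to give details and sends the reader to \cite{secondpaper}.
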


 Since the (separable) degree of $f'$ is at least $2$, the genus of (the normalization of) $\AAA$ is at most $1$. Since any definable function between elliptic curves is an isogeny composed on translation, it is easy to see (see \cite{secondpaper}) that when $\AAA$ has genus $1$, any $f'$ of degree at least $2$ gives rise to a grouplike $(\AAA, f)^\sharp$. Thus, the interesting case is when $\AAA$ is (birational to) $\mathbb{A}^1$. Such rational functions $f'$ have all kinds of remarkable properties and have been studied since the nineteenth century. When $\DD$ is the multiplicative group, its only algebraic group automorphism of finite order is $x \mapsto \frac{1}{x}$, and the corresponding $f'$ are the Chebyshev polynomials.
  When $\DD$ is an elliptic curve, the degree of $\pi$ is small (generically, $2$, and at most $24$ for the worst positive characteristic, complex multiplication case \cite{silverell}) and $f'$ is called a Latt\`{e}s function; it is never a polynomial. In positive characteristic, $\DD$ may also be the additive group ($\tilde{\psi}$ is then a separable additive polynomial such as $x^p+x$).
  In that case, the degree of $\tilde{\pi}$ is bounded by the degree of $\tilde{\psi}$ (see \cite{secondpaper}). 
 Surely someone has called such $f'$ \emph{additive Latt\`{e}s functions}, and that is our terminology.
 Which all adds up to the following theorem that was so useful in \cite{polydyn}.

\begin{thm}\label{endthm}
 If $(K, \sigma)$ is a model of ACFA, $\AAA$ is an algebraic curve and \\ $f: \AAA \rightarrow \AAA^\sigma$ is a definable finite rational function, the following are equivalent:\begin{itemize}
 \item some type in $(\AAA, f)^\sharp$ is grouplike
 \item all types in $(\AAA, f)^\sharp$ are grouplike
 \item there is a birational morphism $h: \AAA \rightarrow \BB$ and a separable $f'$ with $\deg(f') \geq 2$ such that $f = (h^\sigma)^{-1} \circ \Phi^k \circ f' \circ h$ for some $k \in \mathbb{Z}$ and either $\BB$ is an elliptic curve, or $\BB = \mathbb{A}^1$ and $f'$ is a Chebyshev polynomial, a Latt\`{e}s function, or an additive Latt\`{e}s function.
 \end{itemize}
  Furthermore, this property of the pair $(\AAA, f)$ is first-order definable.
\end{thm}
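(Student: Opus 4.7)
The plan is to prove the equivalence by a cycle: the third condition implies all types are grouplike, which trivially implies some type is grouplike, and then I would use Theorem \ref{groupfuncdiagthm} to recover the third condition. For the first direction, each of the listed shapes for $f'$ makes $(\AAA,f)^\sharp$ interalgebraic (via $h$ and then $\pi$) with a coset of a subgroup of an algebraic group curve modulo a finite kernel; the cited fieldlike/grouplike dichotomy in \cite{ChaHru1}, together with $f'$ being separable of degree at least $2$, puts these types into the grouplike case of the Zilber trichotomy, and then uniformity of the trichotomy on $(\AAA,f)^\sharp$ (essentially because $f$ is a function so $(\AAA,f)^\sharp$ has a unique very dense type of $\sigma$-degree $1$ up to Galois conjugacy) forces all types to be grouplike.

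The core direction is that some grouplike type forces the third condition. First I would apply Corollary \ref{groupfuncdiagthm} to obtain the three-storey commutative diagram with a group correspondence collapsed to the function $\phi:\EE\to\EE^\sigma$ on top. Then I would carry out the four-step translation to algebraic geometry listed after the statement: replace $\AAA$, $\CC$, $\EE$ by their Zariski closures (removing lower-dimensional debris, which does not affect Morley degree $1$), restrict to $\sigma$-degree $1$ so these are curves, and use that $(K,\sigma\circ\Phi^{-k})\models\mathrm{ACFA}$ to absorb Frobenius twists. Writing $\pi=\pi'\circ\Phi^n$ and $\rho=\Phi^m\circ\rho'$ with $\pi',\rho'$ separable, and then pushing all inseparability to the right via a single common Frobenius exponent $k$, I obtain the enlarged diagram in which the left half has every arrow separable, while the right half is the $\Phi^k$ twist. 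Rule out $f$ purely inseparable (fieldlike case) so that $\phi^\tau$ is not an isomorphism, which is needed to invoke the algebraic-geometry theorems.

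Now I would apply the first cited theorem of \cite{secondpaper} to the upper-left square to birationally identify $\CC$ with an algebraic group $\DD$ and $\alpha^\tau$ with an isogeny $\psi:\DD\to\DD^\tau$. Then I would apply the second cited theorem of \cite{secondpaper} to the lower-left square: the data $(\DD,\psi,\pi,f^\tau)$ yields a new algebraic-group curve $\tilde\DD$ with isogeny $\tilde\psi$ and a finite separable quotient map $\tilde\pi:\tilde\DD\to\AAA$ realizing $\AAA$ as $\tilde\DD/G$ for some finite group $G$ of algebraic-group automorphisms. A Hurwitz/genus count on the separable $f^\tau$ of degree $\geq 2$ shows that (the normalization of) $\AAA$ has genus $\leq 1$. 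The elliptic case gives the elliptic branch of the conclusion directly; the genus-zero case splits according to whether $\tilde\DD$ is $\gm$, an elliptic curve, or (in positive characteristic) $\ga$, yielding respectively Chebyshev, Lattès, and additive Lattès functions. Finally, re-attaching the $\Phi^k$ twist produces exactly the form $f=(h^\sigma)^{-1}\circ\Phi^k\circ f'\circ h$.

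First-order definability of the conclusion in families follows because each of the finitely many cases in the characterization is itself cut out by a first-order condition on the coefficients of $(\AAA,f)$: the moduli of curves of genus $\leq 1$ are definable, the condition ``$h$ is a birational morphism to $\BB$ with $f=(h^\sigma)^{-1}\circ\Phi^k\circ f'\circ h$ for some $k$ and some $f'$ in the specified list'' is an existential statement in the language of difference fields once one bounds the degrees (which are bounded by $\deg f$), and algebraically closed fields eliminate quantifiers in these bounded-degree families, so the resulting condition is definable. I expect the hardest step to be the invocation of the second theorem from \cite{secondpaper}, which encapsulates the jet-space and Hurwitz--Riemann calculations that force the quotient-by-finite-group-of-automorphisms structure; everything else in this proof is essentially bookkeeping of degrees, Frobenius twists, and the translation between rational functions in the model-theoretic sense of Definition \ref{findom} and separable rational morphisms of curves.
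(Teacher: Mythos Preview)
Your proposal is essentially correct and follows the same route the paper takes in section~\ref{forwref}: the implication $(1)\Rightarrow(3)$ via Theorem~\ref{groupfuncdiagthm}, the Frobenius bookkeeping to reduce to separable morphisms in a model of ACFA with automorphism $\tau=\sigma\circ\Phi^{-k}$, and then the two algebraic-geometry theorems from \cite{secondpaper}, followed by the genus count and the case split on the group $\tilde\DD$. The definability argument is also in the same spirit.

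One point deserves correction. In your $(3)\Rightarrow(2)$ step you write that uniformity holds ``essentially because $f$ is a function so $(\AAA,f)^\sharp$ has a unique very dense type of $\sigma$-degree $1$ up to Galois conjugacy.'' That is not true: $(\AAA,f)^\sharp$ is a minimal \emph{formula}, not a strongly minimal set, and it typically contains many pairwise non-conjugate types (this is exactly the failure of full quantifier elimination in ACFA). The correct mechanism is the one the paper records as item~(4) of the translation: once $f$ is not purely inseparable, Theorem~4.5 of \cite{ChaHru1} says that the mere \emph{existence} of the diagram (equivalently, condition~(3)) forces every non-algebraic type in $(\AAA,f)^\sharp$ to be locally modular, hence grouplike. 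So $(3)\Rightarrow(2)$ does not need a separate uniformity argument at all; it follows directly from the diagram and the cited result. Your invocation of the dichotomy is the right ingredient, just drop the parenthetical justification.
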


\bibliography{bibforfirstpaper}{}
\bibliographystyle{plain}
\end{document}